\renewcommand{\thesubfigure}{\thefigure.\arabic{subfigure}}
\renewcommand{\p@subfigure}{}
\renewcommand{\@thesubfigure}{\thesubfigure:\hskip\subfiglabelskip}
\DeclareFontFamily{U}{tipa}{}
\DeclareFontShape{U}{tipa}{bx}{n}{<->tipabx10}{}
\newcommand{\arc@char}{{\usefont{U}{tipa}{bx}{n}\symbol{62}}}%
\newcommand{\arc}[1]{\mathpalette\arc@arc{#1}}
\newcommand{\arc@arc}[2]{%
  \sbox0{$\m@th#1#2$}%
  \vbox{
    \hbox{\resizebox{\wd0}{\height}{\arc@char}}
    \nointerlineskip
    \box0
  }%
}
\newcommand{\doublewedge}{\big@doubleop{\wedge}}
\newcommand{\big@doubleop}[1]{%
  \DOTSB\mathop{\mathpalette\big@doubleop@aux{#1}}\slimits@
}
\newcommand\big@doubleop@aux[2]{%
  \sbox\z@{$\m@th#1#2$}%
  \makebox[1.35\wd\z@][s]{$\m@th#1#2\hss#2$}%
}
\newcommand{\norm}[1]{\left\|#1\right\|}  
\newcommand{\cl}{\mbox{cl}}
\newcommand{\dcl}{\mbox{cl}_{\Phi}}
\newcommand{\Int}{\mbox{int}}
\newcommand{\bdy}{\mbox{bdy}}
\newcommand{\Nrv}{\mbox{Nrv}}
\newcommand{\near}{\delta} 
\newcommand{\dnear}{\delta_{\Phi}} 
\newcommand{\dcap}{\mathop{\cap}\limits_{\Phi}} 
\newcommand{\dcup}{\mathop{\cup}\limits_{\Phi}} 
\newcommand{\skel}{\mbox{skel}}
\newcommand{\sk}{\mbox{sk}}
\newcommand{\sh}{\mbox{sh}}
\newcommand{\cyc}{\mbox{cyc}}
\newcommand{\vcyc}{\mbox{vcyc}}
\newcommand{\vNrv}{\mbox{vNrv}}
\newcommand{\sn}{\mathop{\delta}\limits^{\doublewedge}} 
\newcommand{\conn}{\mathop{\delta}\limits^{conn}} 
\newcommand{\farconn}{\mathop{\not{\delta}}\limits^{conn}} 
\newcommand{\sconn}{\mathop{\mathop{\delta}\limits^{conn}}\limits^{\doublewedge}} 
\newcommand{\dsconn}{\mathop{\mathop{\delta_{_{\Phi}}}\limits^{conn}}\limits^{\doublewedge}} 
\newcommand{\fardsconn}{\mathop{\mathop{\not{\delta}_{_{\Phi}}}\limits^{conn}}\limits^{\doublewedge}} 
\newcommand{\snd}{\mathop{\delta_{_{\Phi}}}\limits^{\doublewedge}} 
\newtheorem{example}{Example}
\newtheorem{definition}{Definition}
\newtheorem{lemma}{Lemma}
\newtheorem{theorem}{Theorem}
\newtheorem{corollary}{Corollary}
\newtheorem{problem}{Problem}
\definecolor{light}{gray}{0.80}
\begin{document}

\title[Proximal Vortex Cycles]{Proximal Vortex Cycles and Vortex Nerves.\\   Non-Concentric, Nesting, Possibly Overlapping\\ Homology Cell Complexes}

\author[James F. Peters]{James F. Peters}
\address{
Computational Intelligence Laboratory,
University of Manitoba, WPG, MB, R3T 5V6, Canada and
Department of Mathematics, Faculty of Arts and Sciences, Ad\.{i}yaman University, 02040 Ad\.{i}yaman, Turkey}
\thanks{The research has been supported by the Natural Sciences \&
Engineering Research Council of Canada (NSERC) discovery grant 185986 
and Instituto Nazionale di Alta Matematica (INdAM) Francesco Severi, Gruppo Nazionale per le Strutture Algebriche, Geometriche e Loro Applicazioni grant 9 920160 000362, n.prot U 2016/000036.}

\subjclass[2010]{54E05 (Proximity); 55R40 (Homology); 68U05 (Computational Geometry)}

\date{}

\dedicatory{Dedicated to  William Thomson and Som Naimpally}

\begin{abstract}
This article introduces proximal planar vortex 1-cycles, resembling the structure of vortex atoms introduced by William Thomson (Lord Kelvin) in 1867 and recent work on the proximity of sets that overlap either spatially or descriptively.      Vortex cycles resemble Thomson's model of a vortex atom, inspired by P.G. Tait's smoke rings.    A vortex cycle is a collection of non-concentric, nesting 1-cycles with nonempty interiors ({\em i.e.}, a collection of 1-cycles that share a nonempty set of interior points and which may or may not overlap).   Overlapping 1-cycles in a vortex yield an Edelsbrunner-Harer nerve within the vortex.   Overlapping vortex cycles constitute a vortex nerve complex.   Several main results are given in this paper, namely, a Whitehead CW topology and a Leader uniform topology are outcomes of having a collection of vortex cycles (or nerves) equipped with a connectedness proximity and the case where each cluster of closed, convex vortex cycles and the union of the vortex cycles in the cluster have the same homotopy type.
\end{abstract}
\keywords{Connectedness Proximity,  CW Topology, Vortex Cycle, Vortex Nerve}

\maketitle
\tableofcontents

\section{Introduction}
This paper introduces vortex cycles restricted to the Euclidean plane.   Each vortex cycle $A$ (denoted by $\vcyc A$ (briefly, vortex $\vcyc A$)) is a collection of non-concentric, nesting 1-cycles with nonempty interiors ({\em i.e.}, 1-cycles that share a nonempty set of interior points and which may or may not overlap).   That is, the 1-cycles in every planar vortex cycle have a common nonempty interior.  A 1-cycle is a finite, collection of vertices (0-cells) connected by oriented edges (1-cells) that define a simple, closed path so that there is a path between any pair of vertices in the collection.   A path is simple, provided it has no self-intersections.   

Let $\vcyc A$ be a finite region of the Euclidean plane (denoted by $\mathbb{R}^2$).  Also, let $\bdy(\vcyc A)$ be a set of boundary points of $\vcyc A$.  Then, for every vortex cycle, there is a collection of functions $f:\bdy(\vcyc A)\longrightarrow \mathbb{R}^2$ such that each function maps a $\vcyc A$ boundary point to an interior fixed point shared by the 1-cycles in the vortex.   The physical analogue of a vortex cycle is a collection of non-concentric, nesting equipotential curves in an electric field~\cite[\S 5.1, pp. 96-97]{BaldomirHammond1996EMsystemGeometry}.   This view of vortex cycles befits a proximal physical geometry approach to the study of vortices in the physical world~\cite{Peters2016AMSJphysicalGeometry}.

Oriented 1-cycles by themselves in vortex cycles are closed braids~\cite{BirmanMenasco1992TAMSclosedBraids} with nonempty interiors.    The study of vortex cycles and their spatial as well as descriptive proximities is important in isolating distinctive shape properties such as vertex area, cycle overlap count, hole count, nerve count, perimeter, diameter over surface shape sub-regions.   A finite, bounded \emph{planar shape} $A$ (denoted by $\sh A$) is a finite region of the Euclidean plane bounded by a simple closed curve and with a nonempty interior~\cite{Peters2017arXiv1708-04147planarShapes}.   In effect, a vortex cycle is a system of shapes within a shape\footnote{Many thanks to M.Z. Ahmad for pointing this out.}

The geometry of vortex cycles is related to the study shape signatures~\cite{Peters2017AMSJshapeSignature} and the geometry of photon vortices by N.M. Litchinitser~\cite{Litchinitser2012structuredLightGeometry}, overlapping vortices by E. Adelberger, G. Dvali and A. Gruzinov~\cite{Adelberger2007nonzeroPhotonMass}, vortex properties of photons and electromagnetic vortices formed by photons by I.V. Dzedolik~\cite{Dzedolik2004vortexPropertiesOfPhotons} and vortex atoms introduced by Kelvin~\cite{Kelvin1867vortexAtoms}.

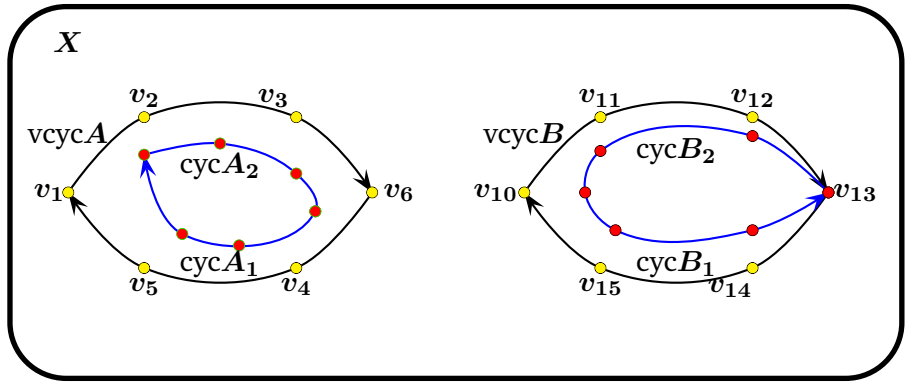
\begin{figure}[!ht]
\centering
\begin{pspicture}
(-0.5,-2.5)(11,3)
\psframe[linewidth=2pt,framearc=.3](-0.8,-2.5)(11.0,2.5)
\rput(0.0,2.0){\textcolor{black}{\large $\boldsymbol{X}$}}
\pscurve[linecolor=black,arrowscale=2]{->}(4,0)(3,-1)(1,-1)(0,0)
\pscurve[linecolor=black,arrowscale=2]{->}(0,0)(1,1)(3,1)(4,0)
\pscurve[linecolor=blue,arrowscale=2]{->}%
(1.00,0.5)(2.00,0.65)(3.0,0.25)(3.25,-0.25)(1.5,-0.55)(1.00,0.5)
\psdots[dotstyle=o, linewidth=1.2pt,linecolor = green, fillcolor = red]%
(3.25,-0.25)(3.0,0.25)(1.00,0.5)(2.00,0.65)(2.25,-0.70)(1.5,-0.55)(3.25,-0.25)
\rput(2.0,-0.95){\textcolor{black}{\large $\boldsymbol{\cyc A_1}$}}
\rput(2.0,0.35){\textcolor{black}{\large $\boldsymbol{\cyc A_2}$}}
\psdots[dotstyle=o, linewidth=1.2pt,linecolor = black, fillcolor = yellow]%
(0,0)(1,-1)(1,1)(3,1)(3,-1)(4,0)
\rput(2.7,1.25){\textcolor{black}{\large $\boldsymbol{v_3}$}}\rput(1,1.25){\textcolor{black}{\large $\boldsymbol{v_2}$}}
\rput(-0.25,0){\textcolor{black}{\large $\boldsymbol{v_1}$}}\rput(3,-1.25){\textcolor{black}{\large $\boldsymbol{v_4}$}}
\rput(0.0,0.75){\textcolor{black}{\large $\boldsymbol{\vcyc A}$}}
\rput(1,-1.25){\textcolor{black}{\large $\boldsymbol{v_5}$}}
\rput(4.35,0.0){\textcolor{black}{\large $\boldsymbol{v_6}$}}
\pscurve[linecolor=black,arrowscale=2]{->}(10,0)(9,-1)(7,-1)(6,0)
\pscurve[linecolor=black,arrowscale=2]{->}(6,0)(7,1)(9,1)(10,0)
\pscurve[linecolor=blue,arrowscale=2]{->}(10,0)(9.0,0.75)(7.0,0.55)(6.8,0.0)(7.2,-0.5)(9.0,-0.5)(10,0)
\psdots[dotstyle=o, linewidth=1.2pt,linecolor = black, fillcolor = yellow]%
(6,0)(7,1.0)(9,-1)(7,-1)(10,0)(9,1)
\psdots[dotstyle=o, linewidth=1.2pt,linecolor = black, fillcolor = red]%
(10,0)(9.0,0.75)(7.0,0.55)(6.8,0.0)(7.2,-0.5)(9.0,-0.5)(10,0)
\rput(5.62,0){\textcolor{black}{\large $\boldsymbol{v_{10}}$}}\rput(7,1.25){\textcolor{black}{\large $\boldsymbol{v_{11}}$}}
\rput(9,1.25){\textcolor{black}{\large $\boldsymbol{v_{12}}$}}\rput(10.35,0.0){\textcolor{black}{\large $\boldsymbol{v_{13}}$}}
\rput(8.7,-1.265){\textcolor{black}{\large $\boldsymbol{v_{14}}$}}\rput(7,-1.25){\textcolor{black}{\large $\boldsymbol{v_{15}}$}}
\rput(6,0.75){\textcolor{black}{\large $\boldsymbol{\vcyc B}$}}
\rput(8.0,-0.95){\textcolor{black}{\large $\boldsymbol{\cyc B_1}$}}
\rput(8.0,0.55){\textcolor{black}{\large $\boldsymbol{\cyc B_2}$}}
\end{pspicture}
\caption[]{Pair of  Two Different Vortex Cycles}
\label{fig:1-vortexCycles}
\end{figure}

Overlapping 1-cycles in a vortex constitute an Edelsbrunner-Harer nerve within the vortex.    Let $F$ be a finite collection of sets.   An \emph{Edelsbrunner-Harer nerve}~\cite[\S III.2, p. 59]{Edelsbrunner1999} nerve consists of all nonempty subcollections of $F$ (denoted by $\Nrv F$) whose sets have nonempty intersection, {\em i.e.},
\[
\Nrv F = \left\{X\subseteq F: \bigcap X\neq \emptyset\right\}\ \mbox{(Edelsbrunner-Harer Nerve)}.
\]

\begin{example}{\bf Two Forms of Vortex Cycles}.\\
Two different vortex cycles $vcyc A, \vcyc B$ are shown in Fig.~\ref{fig:1-vortexCycles}.    Vortex $vcyc A$ contains a pair of non-overlapping 1-cycles $\cyc A_1, \cyc A_2$.      By contrast, vortex $vcyc B$ in Fig.~\ref{fig:1-vortexCycles} contains a pair of overlapping 1-cycles $\cyc B_1, \cyc B _2$ with a common vertex, namely, $v_{13}$.   Let $F$ be a collection of sets of edges in $\cyc B_1, \cyc B _2$.   The pair of 1-cycles in vortex $\vcyc B$ constitute an Edelsbrunner-Harer nerve, since $\cyc B_1\cap \cyc B _2 = v_{13}$, {\em i.e.}, the intersection of 1-cycles $\cyc B_1, \cyc B _2$ is nonempty.   The edges of the cycles in both forms of vortices define closed convex curves.
\qquad \textcolor{blue}{\Squaresteel}
\end{example}

\noindent A number of simple results for vortex cycles come from the Jordan Curve Theorem.

\begin{theorem}\label{thm:JordonCurveTheorem} {\rm [Jordan Curve Theorem~\cite{Jordan1893coursAnalyse}]}.\\
A simple closed curve lying on the plane divides the plane into two regions and forms their common boundary.
\end{theorem}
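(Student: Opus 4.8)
The plan is to pass to the one-point compactification $S^2 = \mathbb{R}^2 \cup \{\infty\}$ and prove the equivalent assertion that a simple closed curve $J \subseteq S^2$ (a subspace homeomorphic to the circle $S^1$) separates $S^2$ into exactly two components, of which $J$ is the common boundary. Since $J$ is compact it avoids a neighborhood of $\infty$, so the components of $S^2 \setminus J$ correspond bijectively to those of $\mathbb{R}^2 \setminus J$, the unbounded planar region being the one containing $\infty$; hence it suffices to argue in $S^2$, where the machinery of algebraic topology is cleanest and, crucially, never appeals to smoothness of $J$.

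First I would count the components using reduced singular homology. The essential ingredient is the \emph{arc non-separation lemma}: if $A \subseteq S^2$ is homeomorphic to a closed interval, then $\tilde{H}_i(S^2 \setminus A) = 0$ for all $i$; in particular $S^2 \setminus A$ is connected. This is proved by induction on a bisection of $A$ into halves, applying reduced Mayer--Vietoris to the two open complements and using a continuity argument to rule out a cycle supported away from finitely many points. Granting the lemma, write $J = A_1 \cup A_2$ as a union of two arcs meeting exactly in their endpoints $\{a,b\}$, and set $U = S^2 \setminus A_1$, $V = S^2 \setminus A_2$, so that $U \cup V = S^2 \setminus \{a,b\}$ and $U \cap V = S^2 \setminus J$. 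The reduced Mayer--Vietoris sequence
\[
\tilde{H}_1(U)\oplus \tilde{H}_1(V) \longrightarrow \tilde{H}_1(S^2\setminus\{a,b\}) \longrightarrow \tilde{H}_0(S^2\setminus J) \longrightarrow \tilde{H}_0(U)\oplus\tilde{H}_0(V)
\]
collapses, by the arc lemma, to an isomorphism $\tilde{H}_0(S^2\setminus J) \cong \tilde{H}_1(S^2 \setminus\{a,b\}) \cong \tilde{H}_1(S^1)=\mathbb{Z}$, since $S^2$ with two punctures is homotopy equivalent to a circle. A reduced $\tilde{H}_0$ of rank one means exactly two path components, so $S^2 \setminus J$, and therefore $\mathbb{R}^2 \setminus J$, has precisely two regions.

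Next I would identify $J$ as their common boundary. Call the two regions $R_1, R_2$; both are open, so $\partial R_1, \partial R_2 \subseteq J$, and it remains to prove the reverse inclusions. Fix $p \in J$ and a neighborhood $N$ of $p$. Choose endpoints $a,b$ on $J$ so close to $p$ that the short arc $A_1 \subset J$ through $p$ lies inside $N$, and let $A_2$ be the complementary arc. Pick $x \in R_1$ and $y \in R_2$. By the arc lemma $S^2 \setminus A_2$ is connected, hence path connected (it is an open subset of a manifold, so locally path connected), giving a path from $x$ to $y$ that avoids $A_2$. This path cannot lie wholly in $S^2 \setminus J$, so it meets $J$, and every such meeting lies on $A_1 \subset N$. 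Reading the path just before its first crossing and just after its last crossing produces points of $R_1$ and of $R_2$ inside $N$. As $N$ shrinks to $p$, every neighborhood of $p$ meets both regions, so $p \in \partial R_1 \cap \partial R_2$; since $p \in J$ was arbitrary, $J = \partial R_1 = \partial R_2$.

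The main obstacle is not the bookkeeping above but the arc non-separation lemma, and behind it the fact that a topological simple closed curve may be \emph{wild} --- nowhere differentiable, or even of positive planar measure in the sense of Osgood-type constructions --- so that no naive picture, transversality argument, or ray-crossing count is directly available. This is precisely the gap for which Jordan's original argument was criticized. The honest alternatives are (i) the inductive homological route sketched here, which tolerates wildness automatically because it never uses regularity of $J$, or (ii) a reduction to polygons: prove separation for simple closed polygons by an elementary parity argument on generic rays, then approximate an arbitrary $J$ uniformly by inscribed polygons and transfer the conclusion, the delicate step being to show that the polygonal insides and outsides converge to genuine components of $\mathbb{R}^2 \setminus J$ rather than degenerating. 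I would favor route (i) for both rigor and brevity.
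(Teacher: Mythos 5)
Your argument is sound, but it bears comparing to what the paper actually does: the paper does not prove the Jordan Curve Theorem at all --- its ``proof'' is a pointer to the literature (Veblen for the first complete proof, Maehara for a proof via the Brouwer Fixed Point Theorem, and Munkres for a detailed textbook treatment), which is a reasonable choice for a classical result being used as a black box. Your route is a genuine proof sketch along the modern homological lines (essentially Hatcher's Proposition 2B.1 and the treatment in Munkres' algebraic-topology text): compactify to $S^2$, establish the arc non-separation lemma, split $J$ into two arcs, and read off $\tilde{H}_0(S^2\setminus J)\cong\mathbb{Z}$ from Mayer--Vietoris, then recover the common-boundary statement by shrinking subarcs. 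This is distinct from the Maehara route the paper cites, which trades homology for the Brouwer Fixed Point Theorem and an explicit connectedness argument. The steps you give in full (the Mayer--Vietoris bookkeeping, the passage between $S^2$ and $\mathbb{R}^2$, and the boundary identification via small subarcs) are all correct; the one load-bearing ingredient you only outline is the arc non-separation lemma itself, whose bisection-plus-limit argument is the genuinely delicate point, and you correctly flag it as such. As a self-contained proof your text is therefore a sketch rather than a complete argument, but it is a correct and standard one, and it buys what the paper's citation does not: an explanation of why the theorem survives for wild (nowhere differentiable, positive-measure) curves, where no transversality or ray-counting argument applies.
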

\begin{proof}
For the first complete proof, see O. Veblen~\cite{Veblen1905TAMStheoryOFPlaneCurves}.  For a simplified proof via the Brouwer Fixed Point Theorem, see R. Maehara~\cite{Maehara1984AMMJordanCurvedTheoremViaBrouwerFixedPointTheorem}.  For an elaborate proof, see J.R. Mundres~\cite[\S 63, 390-391, Theorem 63.4]{Munkres2000}.
\end{proof}

\begin{lemma}\label{lemma:shape}
A finite planar shape contour separates the plane into two distinct regions.
\end{lemma}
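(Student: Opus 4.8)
The plan is to reduce the lemma directly to the Jordan Curve Theorem (Theorem~\ref{thm:JordonCurveTheorem}), which we may invoke, by recognizing that the contour of a finite planar shape is nothing other than a simple closed curve. Recall from the definition of a finite, bounded planar shape that $\sh A$ is a finite region of $\mathbb{R}^2$ bounded by a simple closed curve and having a nonempty interior. The phrase ``shape contour'' refers precisely to this bounding curve, i.e., to the boundary $\bdy(\sh A)$; so the first step is to make explicit the identification of the contour with a simple closed curve $\gamma$ lying on the plane.

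Once this identification is in place, the second step is a direct appeal to Theorem~\ref{thm:JordonCurveTheorem}. Since $\gamma$ is a simple closed curve in $\mathbb{R}^2$, the Jordan Curve Theorem asserts that $\gamma$ divides the plane into two regions and forms their common boundary. Thus $\mathbb{R}^2 \setminus \gamma$ has exactly two connected components, which is the conclusion we want, modulo verifying that the two regions are genuinely distinct.

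The third step records why the two regions are distinct, which is the only point requiring even a word of justification. One of the two complementary components is bounded (the interior of $\sh A$, which is nonempty by the defining hypothesis on shapes), while the other is unbounded (the exterior); a bounded set and an unbounded set cannot coincide, so the two regions are distinct. This also confirms that ``finite'' and ``nonempty interior'' in the definition of $\sh A$ are exactly what is needed to guarantee an interior region separate from the exterior.

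I do not anticipate a genuine obstacle here: the entire mathematical weight is carried by the Jordan Curve Theorem, which is assumed as Theorem~\ref{thm:JordonCurveTheorem}, and the remaining work is the bookkeeping of matching the definition of a planar shape contour to the hypothesis ``simple closed curve.'' If anything is delicate, it is only the implicit verification that the contour of $\sh A$ really is \emph{simple} (no self-intersections) and \emph{closed}; but both follow immediately from the definition of $\sh A$ as a region bounded by a simple closed curve, so this amounts to citing that definition rather than proving anything new.
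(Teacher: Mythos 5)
Your proposal is correct and follows essentially the same route as the paper: identify the shape contour with the simple closed curve bounding the shape (by definition of a planar shape) and then apply Theorem~\ref{thm:JordonCurveTheorem} directly. The only addition is your brief bounded/unbounded remark confirming the two regions are distinct, which the paper leaves implicit but which does not change the argument.
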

\begin{proof}
The boundary of each planar shape is a finite, simple closed curve.   Hence, from Theorem~\ref{thm:JordonCurveTheorem},
a finite, planar shape separates the plane into two regions, namely, the region outside the shape boundary and the region in the shape interior.
\end{proof}

\begin{theorem}\label{thm:vortexCycle}
A finite planar vortex cycle is a collection of non-concentric, nesting shapes within a shape.
\end{theorem}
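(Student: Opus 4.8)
The plan is to read the conclusion almost directly off the definition of a vortex cycle, using the Jordan Curve Theorem (Theorem~\ref{thm:JordonCurveTheorem}) to promote each constituent 1-cycle to a shape, and then using the nesting hypothesis to exhibit a single ambient shape that contains the rest.

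First I would fix notation: by definition $\vcyc A$ is a finite collection of non-concentric, nesting 1-cycles $\cyc A_1,\dots,\cyc A_n$, each with nonempty interior, all sharing a common nonempty set of interior points. Since each $\cyc A_i$ is, by the definition of a 1-cycle, a finite sequence of vertices joined by oriented edges that trace a simple (self-intersection-free) closed path, each $\cyc A_i$ is a simple closed curve in $\mathbb{R}^2$. This is the only place the combinatorial description of a 1-cycle is needed; from here on each $\cyc A_i$ is treated purely as a simple closed curve.

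Next I would apply Theorem~\ref{thm:JordonCurveTheorem} to each $\cyc A_i$ separately: the simple closed curve $\cyc A_i$ partitions the plane into a bounded interior region and an unbounded exterior region, and is their common boundary. Because $\cyc A_i$ has nonempty interior by hypothesis, the bounded region it encloses is a finite region of $\mathbb{R}^2$ delimited by a simple closed curve and having nonempty interior, which is exactly the defining condition of a finite planar shape $\sh A_i$ (this is the same use of the Jordan Curve Theorem that underlies Lemma~\ref{lemma:shape}). Hence every 1-cycle of the vortex is itself a shape, and the non-concentricity of the cycles carries over verbatim to the corresponding shapes.

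Finally I would organize these shapes by the nesting relation. The shared common interior, taken together with nesting, lets me order the cycles by containment of interiors and single out an outermost cycle $\cyc A_1$ whose interior includes the interiors of $\cyc A_2,\dots,\cyc A_n$; the shape $\sh A_1$ then serves as the ambient shape, and $\sh A_2,\dots,\sh A_n$ lie within it, so that $\vcyc A$ is a collection of non-concentric, nesting shapes within a shape, as claimed. The step I expect to demand the most care is precisely this last one: converting the informally stated ``nesting'' into a genuine containment ordering of interiors and confirming that a well-defined outermost shape exists even when some inner cycles overlap, that is, meet the enclosing boundary (as with the shared vertex $v_{13}$ in Figure~\ref{fig:1-vortexCycles}). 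Once that ordering is secured, the shape-within-a-shape conclusion is immediate from the per-cycle shape characterization established above.
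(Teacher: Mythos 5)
Your proposal is correct and follows essentially the same route as the paper: each 1-cycle is a simple closed curve, the Jordan Curve Theorem (via Lemma~\ref{lemma:shape}) promotes it to a planar shape, and the nesting in the definition of a vortex cycle yields the shape-within-a-shape conclusion. The only difference is one of emphasis: the paper simply reads the existence of the ambient outer cycle off its definition (``non-concentric 1-cycles nesting within a 1-cycle''), whereas you spend extra care deriving the containment ordering, which is a reasonable precaution but not a different argument.
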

\begin{proof}
Each 1-cycle in a finite planar vortex cycle is a simple, closed curve.   By definition, a vortex cycle is a collection of  non-concentric 1-cycles nesting within a 1-cycle, each with a nonempty interior. From Theorem~\ref{thm:JordonCurveTheorem}, each vortex 1-cycle separates the plane into two regions.   Hence, from Lemma~\ref{lemma:shape}, a finite planar vortex is a collection of planar shapes within a shape.
\end{proof}

\begin{figure}[!ht]
\centering
\begin{pspicture}
(-0.5,-2.5)(11,3)
\psframe[linewidth=2pt,framearc=.3](-0.8,-2.5)(11.0,2.5)
\rput(0.0,2.0){\textcolor{black}{\large $\boldsymbol{K}$}}
\pscurve[linecolor=black,arrowscale=2]{->}(4,0)(3,-1)(1,-1)(0,0)
\pscurve[linecolor=black,arrowscale=2]{->}(0,0)(1,1)(3,1)(4,0)
\pscurve[linecolor=blue,arrowscale=2]{->}%
(1.00,0.5)(2.00,0.65)(3.0,0.25)(3.25,-0.25)(1.5,-0.55)(1.00,0.5)
\pspolygon*[linearc=0.2](2.00,-0.65)(2.5,0.15)(1.25,0.00)
\rput(1.9,-0.15){\textcolor{white}{\large $\boldsymbol{h}$}}
\psdots[dotstyle=o, linewidth=1.2pt,linecolor = green, fillcolor = red]%
(3.25,-0.25)(3.0,0.25)(1.00,0.5)(2.00,0.65)(2.25,-0.70)(1.5,-0.55)(3.25,-0.25)
\rput(2.0,-0.95){\textcolor{black}{\large $\boldsymbol{\cyc E_1}$}}
\rput(2.0,0.35){\textcolor{black}{\large $\boldsymbol{\cyc E_2}$}}
\psdots[dotstyle=o, linewidth=1.2pt,linecolor = black, fillcolor = yellow]%
(0,0)(1,-1)(1,1)(3,1)(3,-1)(4,0)
\rput(2.7,1.25){\textcolor{black}{\large $\boldsymbol{v_3}$}}\rput(1,1.25){\textcolor{black}{\large $\boldsymbol{v_2}$}}
\rput(-0.25,0){\textcolor{black}{\large $\boldsymbol{v_1}$}}\rput(3,-1.25){\textcolor{black}{\large $\boldsymbol{v_4}$}}
\rput(0.0,0.75){\textcolor{black}{\large $\boldsymbol{\vcyc E}$}}
\rput(1,-1.25){\textcolor{black}{\large $\boldsymbol{v_5}$}}
\rput(4.35,0.0){\textcolor{black}{\large $\boldsymbol{v_6}$}}
\pscurve[linecolor=black,arrowscale=2]{->}(10,0)(9,-1)(7,-1)(6,0)
\pscurve[linecolor=black,arrowscale=2]{->}(6,0)(7,1)(9,1)(10,0)
\pscurve[linecolor=blue,arrowscale=2]{->}(10,0)(9.0,0.75)(7.0,0.55)(6.8,0.0)(7.2,-0.5)(9.0,-0.5)(10,0)
\pspolygon*[linearc=0.2](8.00,-0.65)(8.5,0.15)(7.25,0.00)
\rput(7.9,-0.15){\textcolor{white}{\large $\boldsymbol{h'}$}}
\psdots[dotstyle=o, linewidth=1.2pt,linecolor = black, fillcolor = yellow]%
(6,0)(7,1.0)(9,-1)(7,-1)(10,0)(9,1)
\psdots[dotstyle=o, linewidth=1.2pt,linecolor = black, fillcolor = red]%
(10,0)(9.0,0.75)(7.0,0.55)(6.8,0.0)(7.2,-0.5)(9.0,-0.5)(10,0)
\rput(5.62,0){\textcolor{black}{\large $\boldsymbol{v_{10}}$}}\rput(7,1.25){\textcolor{black}{\large $\boldsymbol{v_{11}}$}}
\rput(9,1.25){\textcolor{black}{\large $\boldsymbol{v_{12}}$}}\rput(10.35,0.0){\textcolor{black}{\large $\boldsymbol{v_{13}}$}}
\rput(8.7,-1.265){\textcolor{black}{\large $\boldsymbol{v_{14}}$}}\rput(7,-1.25){\textcolor{black}{\large $\boldsymbol{v_{15}}$}}
\rput(6,0.75){\textcolor{black}{\large $\boldsymbol{\vcyc G}$}}
\rput(8.0,-0.95){\textcolor{black}{\large $\boldsymbol{\cyc G_1}$}}
\rput(8.0,0.55){\textcolor{black}{\large $\boldsymbol{\cyc G_2}$}}
\end{pspicture}
\caption[]{Pair of  Two Different Vortex Cycles With Holes}
\label{fig:2-vortexCycleHoles}
\end{figure}
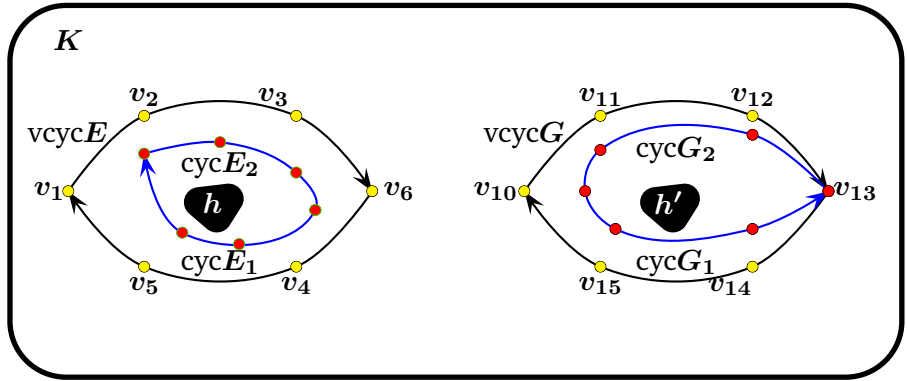

A darkened region in a planar shape represents a hole in the interior of the shape.   In cellular homology,  a cell complex $K$  is a Hausdorff space and a sequence of subspaces called skeletons~\cite{CookeFinney1967homologyCellComplex} (also called a CW complex or Closure-finite Weak topology complex~\cite{Hatcher2002CUPalgebraicTopology}).   Minimal planar skeletons are shown in Table~\ref{tab:skeleton}.

\begin{table}[!ht]\scriptsize 
\caption{Minimal Planar Cell Complex Skeletons}
\label{tab:skeleton}
\begin{tabular}{|c|c|c|c|}
    \hline
    Minimal Skeleton & $K_i, i = 0, 1, 1.5, 2$  & Planar Geometry & Interior\\
    \hline
    \hline
\begin{pspicture}
(0,0)(1,1)
\psdots[dotstyle=o, linewidth=1.2pt,linecolor = black, fillcolor = black]%
(0.5,0.5)	
\end{pspicture}		
& 
$K_0$
&
Vertex
&
nonempty
\\
    \hline
    \hline
\begin{pspicture}
(0,0)(1,1)
\psline[showpoints=true,linestyle=solid,linecolor = black]%
(0.25,0.25)(0.75,0.75)
\psdots[dotstyle=o, dotsize=1.3pt 2.25,linecolor = black, fillcolor = black]%
(0.25,0.25)(0.75,0.75)
\end{pspicture}& 
$K_1$
&
Line segment
&
nonempty
\\
    \hline
    \hline
\begin{pspicture}
(0,0)(1,1)
\psline*[linecolor = gray!55]%
(0.0,0.15)(0.85,0.85)(0.95,0.25)
\psline*[,linecolor = black, fillcolor = black]%
(0.3,0.25)(0.55,0.55)(0.85,0.3)
\psdots[dotstyle=o, dotsize=1.3pt 2.25,linecolor = black, fillcolor = black]%
(0.0,0.15)(0.85,0.85)(0.95,0.25)
\end{pspicture}& 
$K_{1.5}$
&
Partially filled triangle containing a 2-hole
&
nonempty\\
    \hline
    \hline
\begin{pspicture}
(0,0)(1,1)
\psline*[linecolor = gray!55]%
(0.0,0.15)(0.85,0.85)(0.95,0.25)
\psdots[dotstyle=o, dotsize=1.3pt 2.25,linecolor = black, fillcolor = black]%
(0.0,0.15)(0.85,0.85)(0.95,0.25)
\end{pspicture}& 
$K_2$
&
Filled triangle
&
nonempty
\\
\hline
\end{tabular}
\end{table}


Table~\ref{tab:skeleton} includes a $K_{1.5}$ skeleton, which is a filled triangle with a 2-hole in its interior.   The fractional dimension of a $K_{1.5}$ skeleton signals the fact such a skeleton has a partially filled interior, punctured with one or more holes.  A 2-\emph{hole} is a planar region with a boundary and an empty interior.   For example, a finite simple, closed curve that is the boundary of a planar shape defines a 2-hole.   

For a recent graphics study of polygons with holes in their interiors, see H. Boomari, M. Ostavari and A. Zarei~\cite{Boomari2018arXivPolygonsWithHoles}.   Also, from Table~\ref{tab:skeleton}, it is apparent from the grey shading that a $K_2$ skeleton is the intersection of three half planes that form a filled triangle.   Similarly, a 6-sided 1-cycle such as $\cyc A_2$ in vortex cycle $\vcyc A$ in Fig.~\ref{fig:1-vortexCycles} is the intersection of six half planes that construct a 6-gon with a nonempty interior.   Recall that a polytope that is the intersection of finitely-many closed half planes~\cite{Ziegler2007polytopes}.   In general, a 1-cycle is an $n$-sided polytope that is the intersection of $n$ half planes.

\begin{problem}
How many 2-holes are needed to destroy a 1-cycle, making it a shape boundary with an empty interior?
\end{problem}

\begin{problem}
The diameter of a 2-hole is the maximum distance between a pair  of points on the boundary of the 2-hole.   What is the diameter of a 2-hole in a filled, planar $n$-sided polytope that destroys a 1-cycle, making it a shape boundary with an empty interior?
\end{problem}

\begin{example}{\bf Vortex Cycles with Holes}.\\
Two different vortex cycles with holes are shown in Fig.~\ref{fig:2-vortexCycleHoles}, namely, $\vcyc E, \vcyc G$.    The vortex cycle $\vcyc E$ is an example of a 1-cycle within a 1-cycle ({\em i.e.}, $\cyc E_2$ within $\cyc E_1$) in which $\cyc E_2$ has a 2-hole $h$ in its interior.   The vortex cycle $\vcyc G$ is an example of intersecting 1-cycles ({\em i.e.}, $\cyc G_2$ within $\cyc G_1$) that form a vortex nerve in which $\cyc G_2$ has a 2-hole $h'$ in its interior.    In both cases, each inner 1-cycle is in the interior of an outer 1-cycle.   Hence, the 2-hole in the interior of the inner 1-cycle is common to the interiors of both 1-cycles in each vortex.   For example, 2-hole $h'$ in vortex nerve $\vcyc G$ is common to both of its 1-cycles.
\qquad \textcolor{blue}{\Squaresteel}
\end{example}

\begin{theorem}
Let $K$ be a collection of skeletons in a planar cell complex.
\begin{compactenum}[1$^o$]
\item In $K$, skeletons $K_0,K_1,K_2$ are planar shapes.
\item A $K_{1.5}$ skeleton is a planar shape.
\item A 1-cycle $cyc A$ with a hole $h\in \Int(cyc A)$ that is a proper subset in the interior of $cyc A$ is a planar shape.
\item A planar vortex cycle with a hole is a collection of overlapping 1-cycles, each with a hole.
\item A planar vortex cycle with a hole is a collection of concentric planar shapes.
\end{compactenum}
\end{theorem}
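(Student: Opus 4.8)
The plan is to dispatch the five assertions one at a time, in each case reducing to the definition of a planar shape---a finite, bounded planar region enclosed by a simple closed curve and carrying a nonempty interior---and appealing to the Jordan Curve Theorem (Theorem~\ref{thm:JordonCurveTheorem}), Lemma~\ref{lemma:shape}, and the structural description in Theorem~\ref{thm:vortexCycle}. For $1^o$ and $2^o$ I would read off Table~\ref{tab:skeleton}: the filled triangle $K_2$ has a triangular boundary, i.e.\ a simple closed curve built from three $1$-cells, so by Lemma~\ref{lemma:shape} it encloses a region, and since its interior is nonempty it meets the definition of a planar shape; the skeleton $K_{1.5}$ shares this same outer triangular contour, so puncturing the interior with a $2$-hole leaves both the enclosing simple closed curve and the nonempty interior intact, whence $K_{1.5}$ is again a planar shape. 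The degenerate skeletons $K_0$ (a vertex) and $K_1$ (a segment) are treated as limiting shapes under the convention, recorded in Table~\ref{tab:skeleton}, that each has a nonempty interior.

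For $3^o$, the $1$-cycle $\cyc A$ is by definition a simple, closed curve, so Theorem~\ref{thm:JordonCurveTheorem} splits the plane into the inside and outside of $\cyc A$. Because the hole $h$ is a proper subset of $\Int(\cyc A)$, the set $\Int(\cyc A)\setminus h$ is still nonempty, while the enclosing curve is unchanged; thus $\cyc A$ together with its hole satisfies every clause of the planar-shape definition.

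For $4^o$ and $5^o$ I would invoke Theorem~\ref{thm:vortexCycle}, by which a vortex cycle is a collection of nesting $1$-cycles sharing one common nonempty interior. Suppose the innermost $1$-cycle carries a hole $h$ in its interior. Since every $1$-cycle of the vortex contains this common interior, $h$ lies in the interior of each $1$-cycle (exactly as illustrated for $\vcyc G$ in Fig.~\ref{fig:2-vortexCycleHoles}). This yields $4^o$: each $1$-cycle has the hole $h$ in its interior, and since the interiors all contain the region about $h$ they pairwise overlap. For $5^o$, the single shared hole $h$ supplies a common center for the shapes produced by $3^o$, so that the shapes-with-hole are concentric about $h$.

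The step I expect to be the main obstacle is $5^o$, because it appears to clash with the defining \emph{non-concentric} property of the constituent $1$-cycles. The crux is to make precise that concentricity is asserted not of the bare nesting curves but of the shapes obtained in $3^o$: the common hole $h$, shared by all the interiors, is promoted to a common center, and it is with respect to this center that the family of shapes is concentric. Pinning down this shift---from non-concentric curves to shapes concentric about their common hole---is what the argument for $5^o$ must accomplish.
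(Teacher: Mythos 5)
Your proposal is correct and follows essentially the same route as the paper: unwind the definition of a planar shape (simple closed boundary curve plus nonempty interior), use the Jordan Curve Theorem and Lemma~\ref{lemma:shape} for the skeletons, observe that $\Int(\cyc A)\setminus h$ remains nonempty when $h$ is a proper subset of the interior, and reduce $4^o$ and $5^o$ to part $3^o$ together with Theorem~\ref{thm:vortexCycle}. If anything you supply more detail than the paper, whose proof of $4^o$ and $5^o$ consists of the single word ``immediate''; your explicit argument that the common hole lies in the interior of every $1$-cycle of the vortex is the content that the paper leaves unstated. Your flagged obstacle for $5^o$ is also well taken: the paper cites Theorem~\ref{thm:vortexCycle}, which asserts the constituent shapes are \emph{non-concentric}, as justification for the claim that they are \emph{concentric}, and offers no reconciliation; your reading (concentricity of the shapes about the shared hole rather than of the nesting curves about a common geometric center) is a reasonable repair, but be aware the paper itself does not make this distinction, so you are resolving a tension the source leaves open rather than deviating from its argument.
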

\begin{proof}$\mbox{}$\\
\begin{compactenum}[1$^o$:]
\item  By definition, every member of $K$ is a skeleton.   Each of the skeletons $K_0,K_1,K_2$ has a boundary
with nonempty interior.   Hence, these skeletons are planar shapes.
\item\label{lemma:skelHole} By definition, a $K_{1.5}$ skeleton is a closed 3-sided polytope that has a nonempty interior with a hole.   That is, let $h\in \Int(cyc A)$ be a 2-hole that is a proper subset in the interior of a $K_{1.5}$ skeleton.   In that case,  the nonempty part of interior of the $K_{1.5}$ skeleton $\Int(cyc A)$ equals $\Int(cyc A)\setminus h$.  In effect, $cyc A$ is a planar shape.
\item\label{lemma:1cycleHole} That a 1-cycle $cyc A$ with a hole that is a proper subset in the interior of $cyc A$ is a planar shape, follows from Part~\ref{lemma:skelHole}.
\item Immediate from Part~\ref{lemma:1cycleHole}.
\item Immediate from Part~\ref{lemma:1cycleHole} and Theorem~\ref{thm:vortexCycle}.
\end{compactenum}
\end{proof}

Let $(K,\dnear)$ be a collection of planar vortex cycles equipped with a descriptive proximity $\dnear$~\cite[\S 4]{DiConcilio2016descriptiveProximity},~\cite[\S 1.8]{Peters2016ComputationalProximity}, based on the descriptive intersection $\dcap$ of nonempty sets $A$ and $B$~\cite[\S 3]{Peters2013mcs}.   With respect to vortex cycles $\vcyc E, \vcyc G$ in $K$, for example, we consider $\vcyc E\dcap \vcyc G$, {\em i.e.}, the set of descriptions common to a pair of vortex cycles.   A vortex cycle description is a mapping $\Phi:2^K\longmapsto \mathbb{R}^n$ (an $n$-dimensional feature space).   For each given vortex cycle $\vcyc E$, find all vortex cycles $\vcyc G$ in $K$ that have nonempty descriptive intersection with $\vcyc E$, {\em i.e.}, $\cyc A\ \dcap\ \cyc B\neq \emptyset$ such that $\Phi(\vcyc G) = \Phi(\vcyc E)$.  This results in a Leader uniform topology on $H_1$~\cite{Leader1959}.   

\section{Preliminaries}
This section briefly presents the axioms for connectedness, strong and descriptive proximity.   A nonempty set $P$ is a proximity space, provided 
the closeness or remoteness of any two subsets in $P$ can be determined.   

\subsection{Cech Proximity Space}
A proximity space $P$ is sometimes called a $\delta$-space for $P$ equipped with~\cite{Smirnov1952OnProximitySpaces}, provided $P$ is equipped with a relation $\delta$ that satisfies, for example, the following \u{C}ech axioms for sets $A,B,C\in 2^P$.\\
\vspace{3mm}

\noindent {\bf \u{C}ech axioms}\cite[\S 2.5, p. 439]{Cech1966}
\begin{compactenum}[{P}1]
\item All subsets in $P$ are far from the empty set.
\item $A\ \delta\ B\ \implies B\ \delta\ A$, {\em i.e.}, $A$ close to $B$ implies $B$ is close to $A$.
\item $A\ \delta\ \left(B\cup C\right)\ \Leftrightarrow A\ \delta\ B\ \mbox{or}\ A\ \delta\ C$.
\item $A\cap B\neq \emptyset\ \implies A\ \delta\ B$.
\end{compactenum}
\mbox{}\\
\vspace{3mm}

A space $P$ equipped with the \u{C}ech proximity (denoted by $(P,\delta)$) is called a \u{C}ech proximity space.    We adopt the convention for a proximity metric $\delta:2^P\times 2^P\longrightarrow \{0,1\}$ introduced by Ju. M. Smirnov~\cite[\S 1, p. 8]{Smirnov1952OnProximitySpaces}.   We write $\delta(A,B) = 0$, provided subsets $A,B\in 2^P$ are close and $\delta(E,H) = 1$, provided subsets $A,B\in 2^P$ are not close, {\em i.e.}, there is a non-zero distance between $E$ and $H$.   Let $A,B,C\in 2^P$.   Then a proximity space satisfies the following properties.\\
\vspace{3mm}

\noindent {\bf Smirnov Proximity Space Properties}
\begin{compactenum}[{Q}1]
\item If $A\subseteq B$, then for any $C$, $\delta(A,C) \geq \delta(B,C)$.
\item Any sets which intersect are close.
\item No set is close to the empty set.
\end{compactenum}
\mbox{}\\
\vspace{3mm}

In a \u{C}ech proximity space, Smirnov proximity space property Q3 is satisfied by axiom $P1$ and property $Q2$ is satisfied by axioms P2-P4, {\em i.e.}, any subsets of P are close, provided the subsets have nonempty intersection.   That is, $A$ close to $B$ implies $B$ is close to $A$ (axiom P2).   Similarly, $A$ close to $B\cup C$ implies $A$ is close to $B$ or $A$ is close to $C$ (axiom P3) or $A$ is close to $B\cap C$ (axiom P4).  Let $A\cap C = \emptyset$.   Then $\delta(A,C) = 1$, since $A$ has no points in common with $C$.   Similarly, assume $B\cap C = \emptyset$.  Then,  $\delta(B,C) = 1$, since $B$ and $C$ have no points in common.   Hence, property Q1 is satisfied, since
\[ 
\delta(A,C) = \delta(B,C) = 1 \Rightarrow \delta(A,C) \geq \delta(B,C).
\]
For $A\subset B$ and $C\subset B$, we have $\delta(A,C) = 0$, since $A$ and $C$ have points in common.  Similarly, $\delta(B,C) = 0$.  Hence, $\delta(A,C) = \delta(B,C) = 0 \Rightarrow \delta(A,C) \geq \delta(B,C)$.

\subsection{Connectedness Proximity Space}
Let $K$ be a collection of skeletons in a planar cell complex and let $A,B,C$ be subsets containing skeletons in $K$ equipped with the relation $\mathop{\delta}\limits^{conn}$.   The pair $A,B$ is connected, provided $A\cap B\neq\emptyset$, {\em i.e.}, there is a skeleton in $A$ that has at least one vertex in common a skeleton in $B$.   Otherwise, $A$ and $B$ are disconnected.   

Let $X$ be a nonempty set and let $A,B\in 2^X$, nonempty subsets in the collection of subsets $2^X$.   $A$ and $B$ are mutually separated, provided $A\cap B = \emptyset$, {\em i.e.}, $A$ and $B$ have no points in common~\cite[\S 26.4, p. 192]{Willard1970}.   From the notion of separated sets, we obtain the following result for connected spaces.

\begin{theorem}\cite{Willard1970}$\mbox{}$\\
If $X = \mathop{\bigcup}\limits_{n-1}^{\infty} X_n$, where each $X_n\in 2^X$ is connected and $X_{n-1}\cap X_n\neq \emptyset$ for each $n\geq 2$, then space $X$ is connected.
\end{theorem}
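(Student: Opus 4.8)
The plan is to argue by contradiction, using the standard characterization that a space fails to be connected precisely when it admits a separation into two nonempty, mutually separated pieces, together with the observation that a connected subset cannot be split across such a separation. First I would recall the notion relevant here: $X$ is connected if and only if it cannot be written as $X = U \cup V$ with $U,V$ nonempty and mutually separated, where (in the terminology introduced just above) $U$ and $V$ are mutually separated when $U \cap V = \emptyset$. The key ancillary fact I would invoke is that whenever a connected set $C$ satisfies $C \subseteq U \cup V$ for a separation $U,V$, then $C$ lies wholly in $U$ or wholly in $V$; otherwise $C \cap U$ and $C \cap V$ would furnish a separation of the connected set $C$, which is impossible.

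Granting this, suppose toward a contradiction that $X$ is disconnected, so that $X = U \cup V$ with $U,V$ nonempty and mutually separated. Each $X_n$ is connected and contained in $X = U \cup V$, so by the ancillary fact each $X_n$ lies entirely in $U$ or entirely in $V$. Since $X_1$ is nonempty it sits in one of the two pieces; relabelling if necessary, I assume $X_1 \subseteq U$. Next I would propagate this containment along the chain by induction on $n$, exploiting the overlap hypothesis $X_{n-1} \cap X_n \neq \emptyset$. Assuming $X_{n-1} \subseteq U$, the connected set $X_n$ lies in $U$ or in $V$; were $X_n \subseteq V$, then $X_{n-1} \cap X_n \subseteq U \cap V = \emptyset$, contradicting $X_{n-1} \cap X_n \neq \emptyset$. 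Hence $X_n \subseteq U$, completing the induction and showing $X_n \subseteq U$ for every $n$. Consequently $X = \bigcup_n X_n \subseteq U$, forcing $V = \emptyset$ and contradicting the nonemptiness of $V$. Therefore no separation of $X$ exists and $X$ is connected.

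The step I expect to carry the real weight is the propagation of single-piece containment through the overlap condition, that is, the inductive use of $X_{n-1} \cap X_n \neq \emptyset$ to pin every $X_n$ inside the same component $U$; this is exactly where the hypothesis is consumed. I note that the passage to the infinite union requires no delicate limiting argument in this formulation, since establishing $X_n \subseteq U$ for all $n$ already yields $X \subseteq U$. An alternative route would work through the partial unions $Y_n = \bigcup_{k=1}^{n} X_k$, each connected because $Y_{n-1}$ and $X_n$ are connected and meet; that route would instead require the lemma that a family of connected sets with a common point has connected union, applied to the nested family $\{Y_n\}$, all of which share the points of $X_1$. I would favour the separation argument precisely because it sidesteps this second lemma and handles the countable union uniformly.
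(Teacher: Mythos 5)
Your argument is correct in substance, but note that the paper itself does not prove this statement at all: its ``proof'' consists entirely of a pointer to Willard \cite{Willard1970}, so you have supplied a genuine argument where the paper supplies only a citation. Your separation argument is the standard one and is sound; it also differs from the route Willard himself takes (and which you sketch as your ``alternative''), namely building the partial unions $Y_n = X_1 \cup \cdots \cup X_n$, showing each is connected via the lemma on unions of connected sets with a common point, and then applying that same lemma to the nested family $\{Y_n\}$. Your direct route is arguably cleaner, since it consumes the overlap hypothesis exactly once per index and avoids the auxiliary lemma. One point of precision you should fix: you adopt the paper's stated definition of ``mutually separated'' as mere disjointness $U \cap V = \emptyset$, but your ancillary fact --- that a connected $C \subseteq U \cup V$ lies wholly in one piece --- is false for arbitrary disjoint covers (any two-point set splits across the two singletons). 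The fact holds when $U$ and $V$ are separated in the standard topological sense, $\cl U \cap V = U \cap \cl V = \emptyset$ (which is Willard's actual definition, misquoted in the paper), or when $U,V$ are disjoint nonempty open sets; with that correction, both the ancillary fact and your inductive propagation step $X_{n-1} \cap X_n \subseteq U \cap V = \emptyset$ go through exactly as you wrote them.
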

\begin{proof}
The proof is given by S. Willard~\cite[\S 26.4, p. 193]{Willard1970}.   For a new kind of connectedness in which nonempty intersection is replaced by strong nearness, see C. Guadagni~\cite[p. 72]{Guadagni2015thesis} and in J.F. Peters~\cite[\S 1.16]{Peters2016ComputationalProximity}.   
\end{proof}

In this work, connectedness is defined in terms of the connectedness proximity $\conn$ and overlap connectedness $\sconn$ in Section~\ref{sec:overlapConnectedness}.   In both cases, nonempty intersection is replaced by a connectedness proximity in the study of connected cell complex spaces populated by connected skeletons.  For connected sets  $A,B\subset K$, we write $A\  \mathop{\delta}\limits^{conn}\ B$.   In effect, for each pair of skeletons $A,B$ in $K$,  $A\ \conn\ B$, provided there is a path between at least one vertex in $A$ and one or more vertices in $B$.   A \emph{path} is sequence of edges between a pair of vertices.  Equivalently, $A\cap B\neq \emptyset$ implies $A\  \mathop{\delta}\limits^{conn}\ B$.   If the sets of skeletons $A,B\in K$ are separated ({\em i.e.}, $A,B$ have no vertices in common), we write $A\ \mathop{\not\delta}\limits^{conn}\ B$.    This view of connectedness

Then the \u{C}ech axiom P4 is replaced by

\begin{description}
\item[{\bf P4conn}]  $A\cap B\neq \emptyset \ \Leftrightarrow A\ \mathop{\delta}\limits^{conn}\ B$.
\end{description}

\noindent By replacing $\delta$ with $\mathop{\delta}\limits^{conn}$ in the remaining \u{Cech} axioms, we obtain\\
\vspace{3mm}

\noindent {\bf Connectedness proximity axioms}.
\begin{compactenum}[{P}1{conn}]
\item $A\cap B = \emptyset \ \Leftrightarrow A\ \mathop{\not\delta}\limits^{conn}\ B$, {\em i.e.}, the sets of skeletons $A$ and $B$ are not close ($A$ and $B$ are far from each other).
\item $A\ \mathop{\delta}\limits^{conn}\ B\ \implies B\ \mathop{\delta}\limits^{conn}\ A$, {\em i.e.}, $A$ close to $B$ implies $B$ is close to $A$.
\item $A\ \mathop{\delta}\limits^{conn}\ \left(B\cup C\right)\ \implies A\ \mathop{\delta}\limits^{conn}\ B\ \mbox{or}\ A\ \mathop{\delta}\limits^{conn}\ C$.
\item $A\cap B\neq \emptyset \ \Leftrightarrow A\ \mathop{\delta}\limits^{conn}\ B$ (Connectedness Axiom).
\end{compactenum}
\mbox{}\\
\vspace{3mm}

\noindent A connectedness proximity space is denoted by $(K,\conn)$.   For $A,B\in K$, the Smirnov metric $\delta(A,B) = 0$ means that there is a path between any two vertices in $A\cup B$ and $\delta(A,B) = 1$ means that there is no path between any two vertices $A\cup B$.

\begin{lemma}\label{lemma:ConnectnessImpliesOverlap}
Let $K$ be a collection of skeletons in a planar cell complex equipped with the relation $\conn$.   Then $A\ \conn\ B$ implies $A\cap B\neq \emptyset$.
\end{lemma}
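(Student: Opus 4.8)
The plan is to derive the implication directly from the connectedness proximity axioms by contraposition, bypassing the path description of $\conn$ entirely.

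First I would rewrite the goal in contrapositive form: the assertion $A\ \conn\ B \implies A\cap B\neq\emptyset$ is logically equivalent to $A\cap B=\emptyset \implies A\ \farconn\ B$. So it suffices to show that two sets of skeletons $A,B\in K$ with no common vertex are far apart with respect to $\conn$. Second, I would invoke axiom P1conn, whose forward direction states $A\cap B=\emptyset \implies A\ \farconn\ B$; since this is precisely the contrapositive just isolated, the lemma follows. Equivalently, one may read the claim straight off the $\Leftarrow$ direction of the connectedness axiom P4conn, $A\cap B\neq\emptyset \Leftrightarrow A\ \conn\ B$, which already contains it.

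The only point requiring care --- and the reason the statement deserves a lemma rather than a one-line remark --- is that the preliminaries offer two descriptions of $\conn$: the intersection description $A\cap B\neq\emptyset$ and a path description, namely that a path joins some vertex of $A$ to some vertex of $B$. Taken literally the path description is the weaker of the two, since a path $v_0v_1\cdots v_k$ with $v_0\in A$ and $v_k\in B$ need not yield a vertex common to $A$ and $B$. Thus the lemma must be understood against the axiomatic meaning of $\conn$ fixed by P1conn and P4conn, in which connectedness of the skeleton sets is \emph{defined} to mean that they share a vertex. I expect the main obstacle to be exactly this reconciliation: recognizing that the proof proceeds from the intersection-based axioms, so that it reduces to the one-step contrapositive above rather than to any attempt to manufacture a shared vertex out of a connecting path.
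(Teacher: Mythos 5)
Your proposal is correct and takes essentially the same route as the paper: the paper's proof likewise reduces the claim to the axiomatic identification of $A\ \conn\ B$ with the existence of a common vertex (glossing the path description as ``{\em i.e.}, there is a vertex common to $A$ and $B$''), and concludes $A\cap B\neq\emptyset$ at once. Your explicit remark about reconciling the path description with the intersection-based axioms P1conn/P4conn is exactly the gloss the paper makes implicitly, so the two arguments coincide.
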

\begin{proof}
$A\ \conn\ B$, provided there is a path between any pair of vertices in skeletons $A$ and $B$, {\em i.e.}, $A,B$ are connected, provided there is a vertex common to $A$ and $B$.   Consequently, $A\cap B\neq \emptyset$.
\end{proof}

\begin{lemma}\label{lemma:skeletonProximitySpace}
Let $K$ be a connectedness space containing a collection of skeletons in a planar cell complex equipped with the relation $\conn$.   The space $K$ is a proximity space.
\end{lemma}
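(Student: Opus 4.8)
The plan is to show that the connectedness relation $\conn$ on $K$ satisfies the \u{C}ech proximity axioms P1--P4 (equivalently, their connectedness forms P1conn--P4conn), which is exactly what is required for $(K,\conn)$ to be a proximity space. The engine driving every step is the observation that, on a collection of skeletons, the relation $\conn$ collapses to ordinary nonempty intersection. Indeed, Lemma~\ref{lemma:ConnectnessImpliesOverlap} already gives $A\ \conn\ B \implies A\cap B\neq\emptyset$, while the connectedness axiom P4conn supplies the converse $A\cap B\neq\emptyset \implies A\ \conn\ B$; combining the two yields the equivalence
\[
A\ \conn\ B \iff A\cap B\neq\emptyset,
\qquad A,B\in K.
\]
First I would record this equivalence, since once it is in hand the remaining work is purely set-theoretic, reducing the behaviour of $\conn$ to the familiar algebra of intersections of vertex sets.

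Next I would verify the four axioms in turn. For P1, note that $A\cap\emptyset=\emptyset$ for every $A\in K$, so the equivalence forces $A\ \farconn\ \emptyset$; hence every skeleton set is far from the empty set. For P2 (symmetry), the equivalence rewrites $A\ \conn\ B$ as $A\cap B\neq\emptyset$, and commutativity of intersection gives $B\cap A\neq\emptyset$, hence $B\ \conn\ A$. For P3, I would invoke the distributive law $A\cap(B\cup C)=(A\cap B)\cup(A\cap C)$: the left-hand side is nonempty precisely when at least one of $A\cap B$, $A\cap C$ is nonempty, which under the equivalence is exactly the statement that $A\ \conn\ (B\cup C)$ holds iff $A\ \conn\ B$ or $A\ \conn\ C$. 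Finally, P4 is the connectedness axiom itself, $A\cap B\neq\emptyset \implies A\ \conn\ B$, which is already built into the definition of the connectedness proximity space.

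Having checked P1--P4, I would conclude that $\conn$ is a \u{C}ech proximity on $K$, and therefore that $(K,\conn)$ is a proximity space. I do not anticipate a genuine obstacle here: the only nontrivial ingredient is the equivalence between path-connectedness of skeletons and nonempty intersection, and that has already been secured by Lemma~\ref{lemma:ConnectnessImpliesOverlap} together with the connectedness axiom. The single point demanding care is to phrase each verification in terms of the underlying vertex sets, so that ``sharing a common vertex'' and ``having nonempty intersection'' are used interchangeably in a manner consistent with the definition of $\conn$ on skeletons.
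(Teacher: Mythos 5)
Your proposal is correct, but it reaches the conclusion by a different route than the paper. You verify the four \u{C}ech axioms P1--P4 directly, after first collapsing $\conn$ to ordinary nonempty intersection via Lemma~\ref{lemma:ConnectnessImpliesOverlap} together with axiom {\bf P4conn}; from that equivalence the symmetry and union axioms follow by pure set algebra. The paper instead targets the Smirnov proximity space properties Q1--Q3: it disposes of Q3 and Q2 by citing axioms {\bf P1conn}--{\bf P4conn}, and then spends its effort on the monotonicity property Q1, arguing by cases (when $C\subset A\cup B$ every vertex of $C$ is path-connected to $A$ and to $B$, so $\delta(A,C)=0=\delta(B,C)$; when $(A\cup B)\cap C=\emptyset$ both values are $1$), concluding $\delta(A,C)\geq\delta(B,C)$ either way. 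The two approaches buy different things. Yours is more self-contained and verifies exactly the axioms that the paper uses to define a \u{C}ech proximity space; it would also yield Q1 for free, since $A\subseteq B$ and $A\cap C\neq\emptyset$ force $B\cap C\neq\emptyset$, which is arguably tighter than the paper's two-case argument (which checks only the extreme configurations rather than the general hypothesis $A\subseteq B$). The paper's route makes the Smirnov $\{0,1\}$-valued metric explicit, which it reuses verbatim in later examples and in Theorem~\ref{thm:vortexNerveSpace}. Neither argument has a gap that the other repairs; the only point worth flagging in yours is that the equivalence $A\ \conn\ B \Leftrightarrow A\cap B\neq\emptyset$ leans on {\bf P4conn} being assumed as part of the definition of $\conn$, which makes the verification of P4 itself tautological --- acceptable here, since that is precisely how the paper sets up the connectedness axioms.
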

\begin{proof}
Let $A,B,C\in K$.  Smirnov proximity space property Q3 is satisfied by axiom $P1conn$ and property $Q2$ is satisfied by axioms {\bf P2conn}-{\bf P4conn}, {\em i.e.}, any sets of skeletons that are close, are connected.   Let $C\subset A\cup B$ ($C$ is part of the skeleton $A\cup B\in K$).  For any vertex $p$ in $A$ or $B$, there is a path between $p$ and any vertex $q\in C$.   Then $A\ \conn\ C$ and $B\ \conn\ C$.   Consequently, $\delta(A,C) = 0 = \delta(B,C)$,  Hence,  $\delta(A,C) \geq \delta(B,C)$.   If $(A\cup B)\cap C = \emptyset$ (the skeletons in $A$ and $B$ have no vertices in common with $C$), then
$\delta(A,C) = 1 = \delta(B,C)$ and $\delta(A,C) \geq \delta(B,C)$.   From axiom {\bf P4conn}, we have 
\[
(A\cup B) \mathop{\not\delta}\limits^{conn}\ C \Leftrightarrow (A\cup B)\cap C = \emptyset \Leftrightarrow \delta(A,C) = 1 = \delta(B,C) \Rightarrow \delta(A,C) \geq \delta(B,C).
\]
Smirnov property Q1 is satisfied.  Hence, $(K, \conn)$ is a proximity space.
\end{proof}

\begin{figure}[!ht]
\centering
\begin{pspicture}
(-1.5,-2.5)(11,3)
\psframe[linewidth=2pt,framearc=.3](-0.8,-2.5)(10.0,2.5)
\rput(0.0,2.0){\textcolor{black}{\large $\boldsymbol{K}$}}
\pscurve[linecolor=black,arrowscale=2]{->}(4,0)(3,-1)(1,-1)(0,0)
\pscurve[linecolor=black,arrowscale=2]{->}(0,0)(1,1)(3,1)(4,0)
\pscurve[linecolor=blue,arrowscale=2]{->}%
(1.00,0.5)(2.00,0.65)(3.0,0.25)(3.25,-0.25)(1.5,-0.55)(1.00,0.5)
\pspolygon*[linearc=0.2](2.00,-0.65)(2.5,0.15)(1.25,0.00)
\rput(1.9,-0.15){\textcolor{white}{\large $\boldsymbol{h}$}}
\psdots[dotstyle=o, linewidth=1.2pt,linecolor = green, fillcolor = red]%
(3.25,-0.25)(3.0,0.25)(1.00,0.5)(2.00,0.65)(2.25,-0.70)(1.5,-0.55)(3.25,-0.25)
\rput(2.0,-0.95){\textcolor{black}{\large $\boldsymbol{\cyc A_1}$}}
\rput(2.0,0.35){\textcolor{black}{\large $\boldsymbol{\cyc A_2}$}}
\psdots[dotstyle=o, linewidth=1.2pt,linecolor = black, fillcolor = yellow]%
(0,0)(1,-1)(1,1)(3,1)(3,-1)(4,0)
\rput(2.7,1.25){\textcolor{black}{\large $\boldsymbol{v_3}$}}\rput(1,1.25){\textcolor{black}{\large $\boldsymbol{v_2}$}}
\rput(-0.25,0){\textcolor{black}{\large $\boldsymbol{v_1}$}}\rput(3,-1.25){\textcolor{black}{\large $\boldsymbol{v_4}$}}
\rput(0.0,0.75){\textcolor{black}{\large $\boldsymbol{\vcyc A}$}}
\rput(1,-1.25){\textcolor{black}{\large $\boldsymbol{v_5}$}}
\rput(4.35,0.0){\textcolor{black}{\large $\boldsymbol{v_6}$}}
\psline[linecolor=black](5,0)(7,2)(9,-1)
\psdots[dotstyle=o, linewidth=1.2pt,linecolor = black, fillcolor = yellow]%
(5,0)(7,2)(9,-1)
\rput(5.25,-1.45){\textcolor{black}{\large $\boldsymbol{\skel E}$}}
\psline[linecolor=blue](4,0)(5,-2)(8,-1)
\psdots[dotstyle=o, linewidth=1.2pt,linecolor = green, fillcolor = red]%
(4,0)(5,-2)(8,-1)
\rput(8.25,1.05){\textcolor{black}{\large $\boldsymbol{\skel H}$}}
\end{pspicture}
\caption[]{Collection of Skeletons, including a Vortex Cycle with a Hole}
\label{fig:3-cellComplex}
\end{figure}
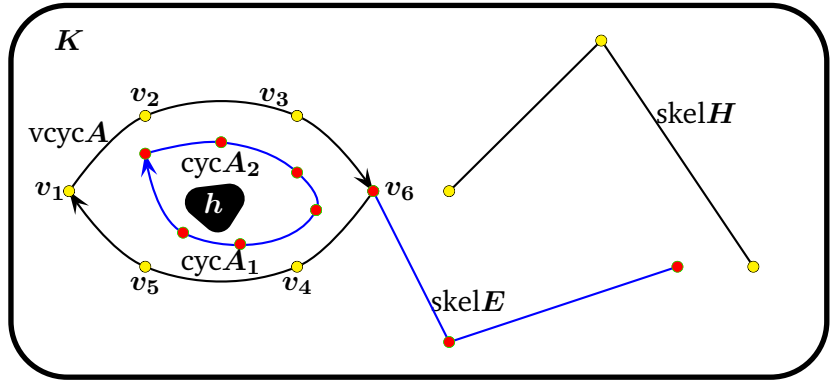


\begin{example}\label{ex:deltaConn} {\bf  Connectedness Proximity Space}.\\
Let $K$ be a collection of skeletons represented in Fig.~\ref{fig:3-cellComplex}, equipped with the proximity $\mathop{\delta}\limits^{conn}$.   A pair of skeletons in $K$ are close, provided the skeletons have at least one vertex in common.   For example, vortex cycle $\vcyc A$ and skeleton $\skel E$ have vertex $v_6$ in common.   Hence, from axiom {\bf P4conn}, we have
\[
v_6\in \vcyc A\ \cap\ \skel E \ \Leftrightarrow\ vcyc A\ \mathop{\delta}\limits^{conn}\ \skel E
\]
Skeletons that are not close have no vertices in common.   For example, in Fig.~\ref{fig:3-cellComplex},
\[
\skel E\ \mathop{\not\delta}\limits^{conn}\ \skel H,
\]
since the pair of skeletons $\skel E,\skel H$ have no vertices in common.   
\qquad \textcolor{blue}{\Squaresteel}
\end{example}

\begin{theorem}
Let $K$ be a collection of vortex cycles in a planar cell complex.   The space $K$ equipped with the relation $\mathop{\delta}\limits^{conn}$ is a proximity space.
\end{theorem}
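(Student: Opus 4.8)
The plan is to reduce the statement to Lemma~\ref{lemma:skeletonProximitySpace}, which already establishes that any collection of skeletons in a planar cell complex, equipped with $\conn$, is a proximity space. The crucial observation is that every finite planar vortex cycle is itself a skeleton: by definition each 1-cycle in $\vcyc A$ is a finite collection of $0$-cells (vertices) joined by $1$-cells (oriented edges) that form a simple closed path, so a vortex cycle is a $1$-dimensional cell complex. Hence a collection $K$ of vortex cycles is a special case of a collection of skeletons, and the relation $\conn$ restricted to $K$ is exactly the connectedness proximity of the earlier lemma: $\vcyc A\ \conn\ \vcyc B$ holds precisely when the two vortex cycles share at least one vertex, equivalently when there is a path joining a vertex of $\vcyc A$ to a vertex of $\vcyc B$.

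Granting this reduction, I would verify the three Smirnov proximity space properties for $(K,\conn)$ exactly as in Lemma~\ref{lemma:skeletonProximitySpace}. Property Q3 (no vortex cycle is close to the empty set) follows immediately from axiom {\bf P1conn}. Property Q2 (intersecting vortex cycles are close) follows from axiom {\bf P4conn}, together with Lemma~\ref{lemma:ConnectnessImpliesOverlap}, since $\vcyc A\cap \vcyc B\neq\emptyset$ is equivalent to $\vcyc A\ \conn\ \vcyc B$. For property Q1, I would take vortex cycles $A,B,C\in K$ with $A\subseteq B$ and argue by cases on whether $C$ shares a vertex with $A$ (hence also with $B$) or not: in the first case $\delta(A,C)=0=\delta(B,C)$, and in the second case, using {\bf P1conn}, $\delta(A,C)=1=\delta(B,C)$; either way $\delta(A,C)\geq \delta(B,C)$, so Q1 holds.

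The hard part will be making the reduction genuinely rigorous rather than cosmetic, namely confirming that a vortex cycle qualifies as a skeleton in the precise sense required by Lemma~\ref{lemma:skeletonProximitySpace}, and that the union $A\cup B$ of two vortex cycles (invoked when checking {\bf P3conn} and Q1) is again a subcomplex whose vertex set is the union of the two vertex sets, so that $\conn$ behaves on unions and pairs of vortex cycles exactly as it does on arbitrary skeletons. Here I would appeal to Theorem~\ref{thm:vortexCycle}, which identifies a vortex cycle as a collection of nesting shapes within a shape, and to the underlying $0$-cell/$1$-cell structure of each constituent 1-cycle. Once this structural identification is in place, the verification of Q1--Q3 is routine and the theorem follows.
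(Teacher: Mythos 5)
Your proposal is correct and takes essentially the same route as the paper: the paper's proof is exactly the reduction to Lemma~\ref{lemma:skeletonProximitySpace}, observing that each vortex cycle is built from 1-cycles which are skeletons, so a collection of vortex cycles is a collection of skeletons and the lemma applies. Your additional re-verification of the Smirnov properties Q1--Q3 merely repeats the content of that lemma and is not needed once the reduction is made.
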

\begin{proof}
A vortex cycle is a collection of concentric 1-cycles.   Each 1-cycle is a skeleton.   Then vortex cycle is a collection of skeletons and each collection of vortex cycles is also a collection of skeletons.   Hence, from Lemma~\ref{lemma:skeletonProximitySpace}, $K$ is a connectedness proximity space.
\end{proof}

\begin{figure}[!ht]
\centering
\begin{pspicture}
(-0.5,-1.5)(11,4.0)
\psframe[linewidth=2pt,framearc=.3](-0.8,-1.5)(11.0,4.25)
\rput(0.0,3.55){\textcolor{black}{\large $\boldsymbol{K}$}}
\pscurve[linecolor=black,arrowscale=2]{->}(4,0)(3,-1)(1,-1)(0,0)
\pscurve[linecolor=black,arrowscale=2]{->}(0,0)(1,1)(3,1)(4,0)
\pscurve[linecolor=blue,arrowscale=2]{->}%
(1.00,0.5)(1.50,0.85)(2.00,1.00)(3,1)(3.25,-0.25)(1.5,-0.55)(1.00,0.5)
\pscurve[linecolor=black,arrowscale=2]{->}%
(3,1)(3.00,2.55)(4.0,3.55)(5.0,3.55)(5.95,2.55)(4.0,1.00)(3,1)
\psdots[dotstyle=o, linewidth=1.2pt,linecolor = black, fillcolor = yellow]%
(3,1)(3.00,2.55)(4.0,3.55)(5.0,3.55)(5.95,2.55)(4.0,1.00)(3,1)
\rput(2.65,3.0){\textcolor{black}{\large $\boldsymbol{\vNrv E}$}}
\rput(4.5,3.25){\textcolor{black}{\large $\boldsymbol{\cyc E_1}$}}
\pscurve[linecolor=blue,arrowscale=2]{->}%
(3,1)(3.55,2.55)(4.0,3.0)(5.0,3.0)(5.5,2.5)(5.0,1.80)(3,1)
\psdots[dotstyle=o, linewidth=1.2pt,linecolor = green, fillcolor = red]%
(3,1)(3.55,2.55)(4.0,3.0)(5.0,3.0)(5.5,2.5)(5.0,1.80)(3,1)
\rput(4.5,2.0){\textcolor{black}{\large $\boldsymbol{\cyc E_2}$}}
\psdots[dotstyle=o, linewidth=1.2pt,linecolor = green, fillcolor = red]%
(3.25,-0.25)(3.0,1)(1.00,0.5)(2.00,1.00)(2.25,-0.70)(1.5,0.85)(3.25,-0.25)
\rput(2.0,-0.95){\textcolor{black}{\large $\boldsymbol{\cyc A_1}$}}
\rput(2.0,0.35){\textcolor{black}{\large $\boldsymbol{\cyc A_2}$}}
\psdots[dotstyle=o, linewidth=1.2pt,linecolor = black, fillcolor = yellow]%
(0,0)(1,-1)(1,1)(3,-1)(4,0)
\rput(2.7,1.25){\textcolor{black}{\large $\boldsymbol{v_3}$}}
\rput(-0.25,0){\textcolor{black}{\large $\boldsymbol{v_1}$}}
\rput(0.0,0.75){\textcolor{black}{\large $\boldsymbol{\vNrv A}$}}
\rput(4.35,0.0){\textcolor{black}{\large $\boldsymbol{v_6}$}}
\pscurve[linecolor=black,arrowscale=2]{->}(10,0)(9,-1)(7,-1)(6,0)
\pscurve[linecolor=black,arrowscale=2]{->}(6,0)(7,3.5)(9,3.5)(10,0)
\pscurve[linecolor=blue,arrowscale=2]{->}%
(10,0)(9.0,3.0)(7.0,3.0)(6.8,0.0)(7.2,-0.5)(9.0,-0.5)(10,0)
\psdots[dotstyle=o, linewidth=1.2pt,linecolor = black, fillcolor = yellow]%
(6,0)(7,3.5)(9,-1)(7,-1)(10,0)(9,3.5)
\psdots[dotstyle=o, linewidth=1.2pt,linecolor = black, fillcolor = red]%
(10,0)(9.0,3.0)(7.0,3.0)(6.8,0.0)(7.2,-0.5)(9.0,-0.5)(10,0)
%
\pscurve[linecolor=black,arrowscale=2]{->}(7,1)(7.5,2.5)(8.5,2.75)(9.25,2)(8.5,0)(7.5,0)(7,1)
\psdots[dotstyle=o, linewidth=1.2pt,linecolor = black, fillcolor = yellow]%
(7,1)(7.5,2.5)(8.5,2.75)(9.25,2)(8.5,0)(7.5,0)(7,1)
\pscurve[linecolor=black,arrowscale=2]{->}(7,1)(7.75,2.25)(8.25,2.25)(8.75,2)(8.25,0.5)(7.75,0.5)(7,1)
\psdots[dotstyle=o, linewidth=1.2pt,linecolor = black, fillcolor = red]%
(7,1)(7.75,2.25)(8.25,2.25)(8.68,1)(8.75,2)(8.25,0.5)(7.75,0.5)(7,1)
\rput(7.25,1.55){\textcolor{black}{\large $\boldsymbol{\vNrv H}$}}
\rput(8.0,-0.25){\textcolor{black}{\large $\boldsymbol{\cyc H_1}$}}
\rput(8.0,2.0){\textcolor{black}{\large $\boldsymbol{\cyc H_2}$}}
\rput(5.62,0){\textcolor{black}{\large $\boldsymbol{v_{10}}$}}
\rput(10.35,0.0){\textcolor{black}{\large $\boldsymbol{v_{13}}$}}
\rput(6.15,0.75){\textcolor{black}{\large $\boldsymbol{\vNrv B}$}}
\rput(8.0,-1.25){\textcolor{black}{\large $\boldsymbol{\cyc B_1}$}}
\rput(8.0,3.35){\textcolor{black}{\large $\boldsymbol{\cyc B_2}$}}
\end{pspicture}
\caption[]{Collection of Proximal Vortex Nerves}
\label{fig:4-vortexNerves}
\end{figure}
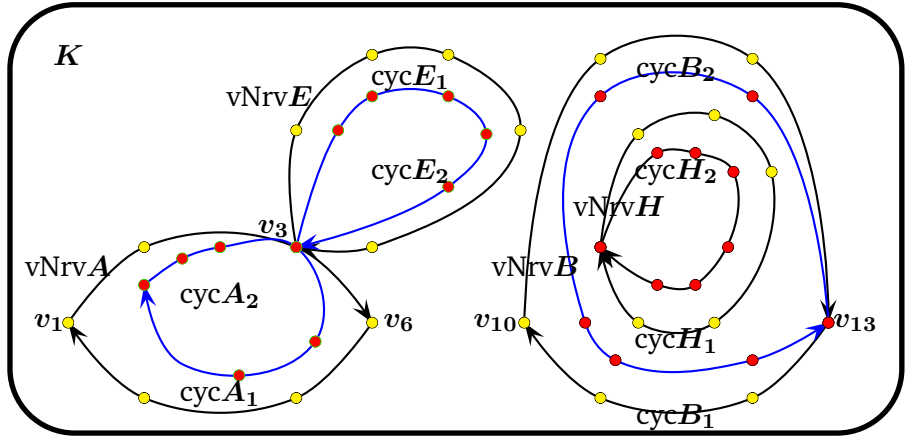

\setlength{\intextsep}{0pt}
\begin{wrapfigure}[8]{R}{0.42\textwidth}
\begin{minipage}{4.0 cm}
\centering
\begin{pspicture}
(0,-1.0)(4,1)
\pscurve[linecolor=black,arrowscale=2]{->}%
(1,0)(2,1)(3,1)(4,0)(3,-1)(2,-1)(1,0)
\psdots[dotstyle=o, linewidth=1.2pt,linecolor = black, fillcolor = yellow]%
(1,0)(2,1)(3,1)(4,0)(3,-1)(2,-1)(1,0)
\pscurve[linecolor=blue,arrowscale=2]{->}%
(1,0)(2,0.5)(3,0.5)(3.5,0)(3,-0.5)(2,-0.5)(1,0)
\psdots[dotstyle=o, linewidth=1.2pt,linecolor = green, fillcolor = red]%
(1,0)(2,0.5)(3,0.5)(3.5,0)(3,-0.5)(2,-0.5)(1,0)
\rput(2.5,-0.75){\textcolor{black}{\large $\boldsymbol{\cyc A_1}$}}
\rput(2.5,-0.25){\textcolor{black}{\large $\boldsymbol{\cyc A_2}$}}
\pspolygon*[linearc=0.2](+0.75,-0.25)(1.35,1.45)(1.25,-1.00)
\psdots[dotstyle=o, linewidth=1.2pt,linecolor = red, fillcolor = green]%
(1.25,0.25)(1.25,-0.30)
\rput(1.25,0.45){\textcolor{white}{\large $\boldsymbol{v}$}}
\rput(1.25,-0.40){\textcolor{white}{\large $\boldsymbol{v'}$}}
\rput(1,0){\textcolor{white}{\large $\boldsymbol{h}$}}
\end{pspicture}
\caption[]{$\boldsymbol{\left(\cyc A_1\cup \cyc A_2\right)\ \cap\ h}$}
\label{fig:cyclesOnHole}
\end{minipage}
\end{wrapfigure} 

\subsection{Vortex Nerves Proximity space}
A vortex cycle $\vcyc A$ containing 1-cycles with a common vertex is an example of a vortex nerve (denoted by $vNrv A$).   A collection of vortex nerves equipped with the $\conn$ proximity is a connectedness  proximity space.

\begin{theorem}\label{thm:vortexNerveSpace}
Let $K$ be a collection of vortex nerves in a planar cell complex.   The space $K$ equipped with the relation $\mathop{\delta}\limits^{conn}$ is a proximity space.
\end{theorem}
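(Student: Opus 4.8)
The plan is to reduce this statement to the already-established Lemma~\ref{lemma:skeletonProximitySpace}, exactly as was done for the preceding theorem on collections of vortex cycles. The essential observation is that a vortex nerve is a particular kind of vortex cycle --- namely, one whose nesting 1-cycles share a common vertex and hence overlap --- so every vortex nerve is assembled from the same basic pieces, the 1-cycles, each of which is a skeleton in the planar cell complex.

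First I would unfold the definitions. A vortex nerve $\vNrv A$ is a vortex cycle in which the 1-cycles have at least one vertex in common; by definition each such 1-cycle is a finite collection of 0-cells joined by oriented 1-cells along a simple closed path, which is precisely a skeleton. Thus each vortex nerve is a collection of skeletons, and consequently the ambient collection $K$ of vortex nerves is itself a collection of skeletons in a planar cell complex.

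Next I would invoke Lemma~\ref{lemma:skeletonProximitySpace} directly. That lemma asserts that any collection of skeletons in a planar cell complex, equipped with the relation $\conn$, satisfies the Smirnov proximity properties Q1--Q3 by way of the connectedness axioms P1conn--P4conn, and is therefore a proximity space. Since $K$ has just been identified as such a collection of skeletons, the conclusion that $(K,\conn)$ is a proximity space follows immediately.

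The only point requiring any care --- and it is the closest thing to an obstacle here --- is verifying that the overlap condition defining a vortex nerve does not interfere with the skeleton structure relied upon in Lemma~\ref{lemma:skeletonProximitySpace}. But the overlap condition only strengthens adjacency: it guarantees the shared vertex that makes the constituent 1-cycles connected under $\conn$, in accord with Lemma~\ref{lemma:ConnectnessImpliesOverlap}, and it in no way disturbs the fact that each constituent is a skeleton. Hence the reduction goes through unchanged, and the proof amounts to a short appeal to the earlier lemma.
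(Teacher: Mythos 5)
Your proposal matches the paper's own proof: both reduce the statement to Lemma~\ref{lemma:skeletonProximitySpace} by observing that a vortex nerve is a collection of intersecting 1-cycles, hence of skeletons, so that $K$ is itself a collection of skeletons equipped with $\conn$. The extra remark that the overlap condition does not disturb the skeleton structure is a harmless elaboration; the argument is essentially identical.
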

\begin{proof}
Each vortex nerve is a collection of intersecting 1-cycles, which are skeletons.   The results follows from Lemma~\ref{lemma:skeletonProximitySpace}, since $K$ is also a collection of skeletons equipped with the proximity $\conn$.
\end{proof}

\begin{example} {\bf Vortex Nerves Proximity Space}.\\
Three vortex nerves $\vNrv A,\vNrv E\ \mbox{attached to $\vNrv A$}, \vNrv B, \vNrv H\ \mbox{in the interior of $\vNrv B$}$ in a cell complex $K$ are represented in Fig.~\ref{fig:4-vortexNerves}.   Let the collection of vortex nerves $K$ be equipped with the proximity $\conn$.    Vortex nerves are close, provided the nerves have nonempty intersection.  For example, $\vNrv A\ \conn\ \vNrv E$, {\em i.e.}, $\delta(\vNrv A, \vNrv E) = 0$.   Hence, Smirnov property Q2 is satisfied by $\left(K,\conn\right)$.   Vortex nerves are far (not close), provided the vortex nerves have empty intersection.   For example,  $\vNrv A\ \farconn\ \vNrv E$, {\em i.e.}, $\delta(\vNrv A, \vNrv E) = 1$ (Smirnov property Q3).    We also have, for example,
\begin{align*}
\delta(\vNrv A,\vNrv H) &= 1 = \delta(\vNrv B,\vNrv H)\ \mbox{non-intersecting nerves are far},\\
\delta(\vNrv H,\vNrv E) &= 1\ \mbox{and}\  \delta(\vNrv A,\vNrv E) = 0\\
 &\Leftrightarrow \delta(\vNrv H,\vNrv E) \geq \delta(\vNrv A,\vNrv E).
\end{align*}
In effect, Smirnov property Q1 is satisfied.   Hence, $\left(K,\conn\right)$ is a connectedness proximity space.
\qquad \textcolor{blue}{\Squaresteel}
\end{example}

\begin{example} {\bf Spacetime Vortex Nerves Proximity Space}.\\
Spacetime vortex nerves (overlapping vortex cycles) have been observed in recent studies of ground vortex aerodynamics by J.P. Murphy and D.G. MacManus~\cite{Murphy2011ground} and  in the vortex flows of overlapping jet streams in ground proximity by J.M.M. Barata, N. Bernardo, P.J.C.T. Santos and A.R.R. Silva~\cite{Silva2011vortexFlows} and by A.R.R. Silva, D.F.G. Dur\~{a}o, J.M.M. Barata, P. Santos  S. Ribeiro~\cite{Silva2008groundVortexFlow}.   Physical vortex nerves can be observed in the representation of the contours of overlapping turbulence velocity vortices in, for example, Figure 6 in ~\cite[p. 8]{Silva2008groundVortexFlow} and vortex pairs systems in Figure 7 in P.R. Spalart, M. Kh. Strelets, A.K. Travin and M.L. Slur~\cite{Spalart2001vortexPairs}.
\qquad \textcolor{blue}{\Squaresteel}
\end{example}

The presence of holes in the interiors of vortex nerves in a cell complex equipped with the proximity $\conn$ gives us the following result.

\begin{corollary}
Let $K$ be a collection of vortex nerves containing holes in their interiors in a planar cell complex.   The space $K$ equipped with the relation $\mathop{\delta}\limits^{conn}$ is a proximity space.
\end{corollary}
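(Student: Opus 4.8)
The plan is to reduce this corollary directly to Lemma~\ref{lemma:skeletonProximitySpace} by way of Theorem~\ref{thm:vortexNerveSpace}, observing that introducing holes into the interiors of the vortex nerves leaves the underlying skeletal structure --- and hence the connectedness relation $\conn$ --- unchanged. First I would recall that, by definition, each vortex nerve is a collection of intersecting 1-cycles, each of which is a skeleton; consequently $K$ is itself a collection of skeletons. This is precisely the hypothesis of Lemma~\ref{lemma:skeletonProximitySpace}, which already establishes that any collection of skeletons equipped with $\conn$ satisfies the Smirnov properties Q1--Q3 and is therefore a proximity space.

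The one point requiring justification is that the holes do not interfere with this argument. Here I would appeal to the fact that the relation $\conn$ is defined entirely in terms of $0$-cells (vertices) and the $1$-cells (edges) joining them: for skeletons $A,B\in K$ we have $A\ \conn\ B$ exactly when $A$ and $B$ share at least one vertex, equivalently when there is a path of edges between some vertex of $A$ and some vertex of $B$. A $2$-hole, being a planar region with a boundary and an empty interior lying inside a 1-cycle, alters only the interior content of a skeleton and introduces no new incidence or adjacency among the vertices and edges that determine $\conn$. Thus removing a hole $h$ from $\Int(\cyc A)$ changes neither the vertex set nor the edge set of any nerve, and so leaves both $A\ \conn\ B$ and $A\ \farconn\ B$ exactly as before.

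Having fixed this observation, the verification of the connectedness axioms {\bf P1conn}--{\bf P4conn} and the Smirnov properties proceeds verbatim as in the proof of Lemma~\ref{lemma:skeletonProximitySpace}: nonempty vertex intersection is equivalent to $\conn$-closeness, the relation is symmetric, it distributes over unions, and the monotonicity property Q1 follows from the two cases $(A\cup B)\cap C\neq\emptyset$ and $(A\cup B)\cap C=\emptyset$. I expect no genuine obstacle here; the only care needed is the conceptual point above, namely that a hole is a feature of a shape's interior and is invisible to a proximity relation built from vertices and connecting paths. Concretely, I would phrase the argument as follows: each hole-bearing vortex nerve is a collection of intersecting 1-cycles (skeletons), so $K$ is a collection of skeletons equipped with $\conn$, and the conclusion follows immediately from Lemma~\ref{lemma:skeletonProximitySpace}.
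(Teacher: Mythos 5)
Your proposal is correct and follows essentially the same route as the paper: the paper's proof also derives the corollary immediately from Theorem~\ref{thm:vortexNerveSpace} (itself resting on Lemma~\ref{lemma:skeletonProximitySpace}), noting that the relationships between vortex nerves are unaffected by holes in their interiors. Your added explanation --- that $\conn$ is determined solely by vertices and edges, which a $2$-hole does not alter --- simply makes explicit the observation the paper states in one line.
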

\begin{proof}
Immediate from Theorem~\ref{thm:vortexNerveSpace}, since the relationships between vortex nerves in $K$ is unaffected by the presence of holes in the interiors of the nerves.
\end{proof}

\begin{example}
A pair of disjoint vortex nerves containing holes in their interiors is represented in Fig.~\ref{fig:5-vortexNerveHoles}.
\qquad \textcolor{blue}{\Squaresteel}
\end{example}

\begin{figure}[!ht]
\centering
\begin{pspicture}
(-0.5,-2.5)(11,3)
\psframe[linewidth=2pt,framearc=.3](-0.8,-2.5)(11.0,2.5)
\rput(0.0,2.0){\textcolor{black}{\large $\boldsymbol{K}$}}
\pscurve[linecolor=black,arrowscale=2]{->}(4,0)(3,-1)(1,-1)(0,0)
\pscurve[linecolor=black,arrowscale=2]{->}(0,0)(1,1)(3,1)(4,0)
\pscurve[linecolor=blue,arrowscale=2]{->}%
(1.00,0.5)(2.00,0.65)(3.0,0.25)(4.0,0.0)(3.0,-0.25)(1.5,-0.55)(1.00,0.5)
\pspolygon*[linearc=0.2](2.00,-0.65)(2.5,0.15)(1.25,0.00)
\rput(1.9,-0.15){\textcolor{white}{\large $\boldsymbol{h}$}}
\psdots[dotstyle=o, linewidth=1.2pt,linecolor = green, fillcolor = red]%
(1.00,0.5)(2.00,0.65)(3.0,0.25)(4.0,0.0)(3.0,-0.25)(1.5,-0.55)(1.00,0.5)
\rput(2.0,-0.95){\textcolor{black}{\large $\boldsymbol{\cyc E_1}$}}
\rput(2.0,0.35){\textcolor{black}{\large $\boldsymbol{\cyc E_2}$}}
\psdots[dotstyle=o, linewidth=1.2pt,linecolor = black, fillcolor = yellow]%
(0,0)(1,-1)(1,1)(3,1)(3,-1)(4,0)
\rput(2.7,1.25){\textcolor{black}{\large $\boldsymbol{v_3}$}}\rput(1,1.25){\textcolor{black}{\large $\boldsymbol{v_2}$}}
\rput(-0.25,0){\textcolor{black}{\large $\boldsymbol{v_1}$}}\rput(3,-1.25){\textcolor{black}{\large $\boldsymbol{v_4}$}}
\rput(0.0,0.75){\textcolor{black}{\large $\boldsymbol{\vcyc E}$}}
\rput(1,-1.25){\textcolor{black}{\large $\boldsymbol{v_5}$}}
\rput(4.35,0.0){\textcolor{black}{\large $\boldsymbol{v_6}$}}
\pscurve[linecolor=black,arrowscale=2]{->}(10,0)(9,-1)(7,-1)(6,0)
\pscurve[linecolor=black,arrowscale=2]{->}(6,0)(7,1)(9,1)(10,0)
\pscurve[linecolor=blue,arrowscale=2]{->}(10,0)(9.0,0.75)(7.0,0.55)(6.8,0.0)(7.2,-0.5)(9.0,-0.5)(10,0)
\pspolygon*[linearc=0.2](8.00,-0.65)(8.5,0.15)(7.25,0.00)
\rput(7.9,-0.15){\textcolor{white}{\large $\boldsymbol{h'}$}}
\psdots[dotstyle=o, linewidth=1.2pt,linecolor = black, fillcolor = yellow]%
(6,0)(7,1.0)(9,-1)(7,-1)(10,0)(9,1)
\psdots[dotstyle=o, linewidth=1.2pt,linecolor = black, fillcolor = red]%
(10,0)(9.0,0.75)(7.0,0.55)(6.8,0.0)(7.2,-0.5)(9.0,-0.5)(10,0)
\rput(5.62,0){\textcolor{black}{\large $\boldsymbol{v_{10}}$}}\rput(7,1.25){\textcolor{black}{\large $\boldsymbol{v_{11}}$}}
\rput(9,1.25){\textcolor{black}{\large $\boldsymbol{v_{12}}$}}\rput(10.35,0.0){\textcolor{black}{\large $\boldsymbol{v_{13}}$}}
\rput(8.7,-1.265){\textcolor{black}{\large $\boldsymbol{v_{14}}$}}\rput(7,-1.25){\textcolor{black}{\large $\boldsymbol{v_{15}}$}}
\rput(6,0.75){\textcolor{black}{\large $\boldsymbol{\vcyc G}$}}
\rput(8.0,-0.95){\textcolor{black}{\large $\boldsymbol{\cyc G_1}$}}
\rput(8.0,0.55){\textcolor{black}{\large $\boldsymbol{\cyc G_2}$}}
\end{pspicture}
\caption[]{Pair of  Disjoint Vortex Nerves With Holes}
\label{fig:5-vortexNerveHoles}
\end{figure}
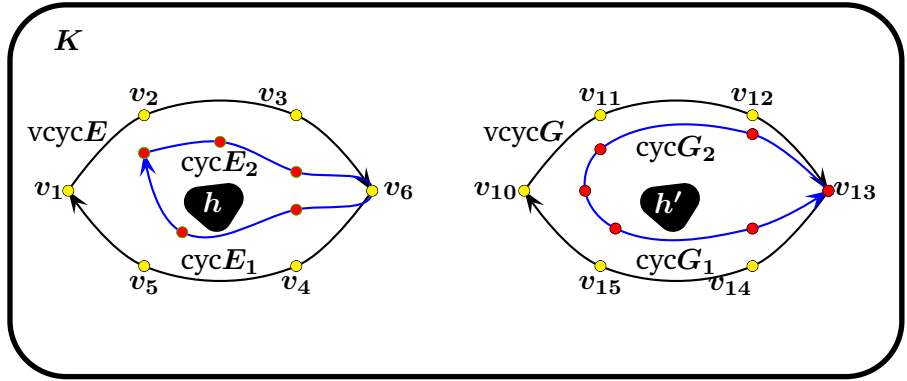

\begin{problem}
Let $K$ be a collection of vortex nerves so that the boundary of each of the holes has more than one vertex that is in  the intersection 1-cycles in each of the nerves in a planar cell complex.    For an example of vortex cycles that overlap vertices on the boundary of a hole, see Fig.~\ref{fig:cyclesOnHole}.    Prove that a vortex nerve is destroyed by a hole whose boundary overlaps the nerve cycles in more than one vertex.
\end{problem}

\begin{problem}
Let $K$ be a collection of vortex nerves so that the boundary of each of the holes has a single vertex that is in the intersection of the 1-cycles in each of the nerves in a planar cell complex.    Also let $K$ be equipped the proximity $\conn$.   Prove that $K$ is a connectedness proximity space.
\end{problem}

\setlength{\intextsep}{0pt}
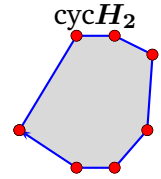
\begin{wrapfigure}[13]{R}{0.35\textwidth}
\begin{minipage}{3.5 cm}
\centering
\begin{pspicture}
(1,0.5)(4,3)
\psline*[linecolor=gray!30]{->}%
(2,1)(2.75,2.25)(3.25,2.25)(3.75,2)(3.68,1)(3.25,0.5)(2.75,0.5)(2,1)
\psline[linecolor=blue]{->}%
(2,1)(2.75,2.25)(3.25,2.25)(3.75,2)(3.68,1)(3.25,0.5)(2.75,0.5)(2,1)
\psdots[dotstyle=o, linewidth=1.2pt,linecolor = black, fillcolor = red]%
(2,1)(2.75,2.25)(3.25,2.25)(3.75,2)(3.68,1)(3.25,0.5)(2.75,0.5)(2,1)
\rput(3.0,2.5){\textcolor{black}{\large $\boldsymbol{\cyc H_2}$}}
\end{pspicture}
\caption[]{$\in \vNrv H$}
\label{fig:cycleInterior}
\end{minipage}
\end{wrapfigure} 

\subsection {Neighbourhoods, Set Closure, Boundary, Interior and CW Topology}
The interior of a nonempty set is considered, here.   It is the interior of a vortex cycle that leads to strong forms of connectedness proximity on a shapes in cell complex in which the interiors of vortices overlap either spatially or descriptively.   Let $A$ be a nonempty set of vertices, $p\in A$ in a bounded region $X$ of the Euclidean plane.  An \emph{open ball} $B_r(p)$ with radius $r$ is defined by
\[
B_r(p) = \left\{q\in X: \norm{p - q} < r\right\}.
\]
The \emph{closure} of $A$ (denoted by $\cl A$) is defined by
\[
\cl A = \left\{q\in X: B_r(q)\subset A\ \mbox{for some $r$}\right\}\ \mbox{(Closure of set $A$)}.
\]
The \emph{boundary} of $A$ (denoted by $\bdy A$) is defined by
\[
\bdy A = \left\{q\in X: B(q)\subset A\ \cap\ X\setminus A\right\}\ \mbox{(Boundary of set $A$)}.
\]
Of great interest in the study of the closeness of vortex cycles is the interior of a shape, found by subtracting the boundary of a shape from its closure.  In general, the \emph{interior} of a nonempty set $A\subset X$ (denoted by $\Int A$) defined by
\[
\Int A = \cl A - \bdy A\ \mbox{(Interior of set $A$)}.
\]

Let the cell complex $K$ be a Hausdorff space.   Let $A$ be a cell (skeleton) in $K$.   Each cell decomposition $A,B\in K$ is called a CW complex, provided
\begin{description}
\item [{\bf Closure Finiteness}] Closure of every cell (skeleton) $\cl A$ intersects on a finite number of other cells.
\item [{\bf Weak topology}]  $A\in 2^K$ is closed ($A = \bdy A\cup \Int A$), provided $A\cap \cl B$ is closed, {\em i.e.},\\
$
A\cap \cl B = \bdy(A\cap \cl B)\cup Int(A\cap \cl B).
$
\end{description}
$K$ has a topology $\tau$ that is a CW topology~\cite{Whitehead1949BAMS-CWtopology},~\cite[\S 2.4, p. 81]{Peters2017AMSJshapeSignature}, provided $\tau$ has the closure finiteness and weak topology properties.

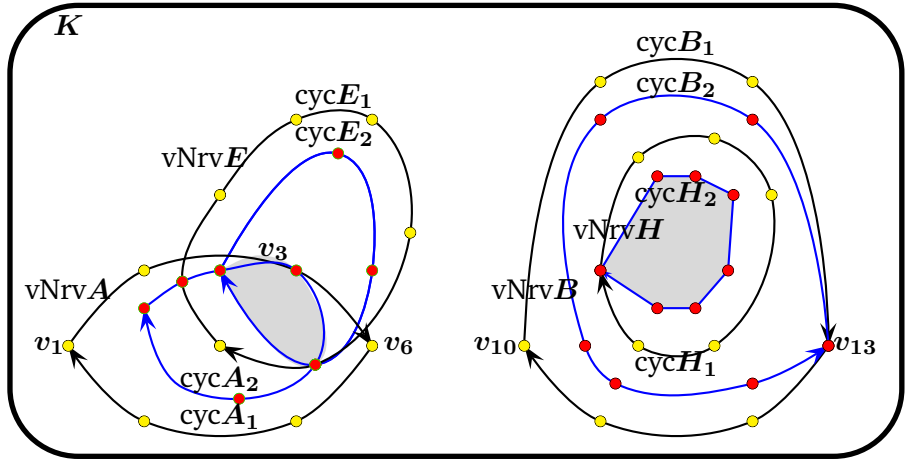
\begin{figure}[!ht]
\centering
\begin{pspicture}
(-0.5,-1.5)(11,5.0)
\psframe[linewidth=2pt,framearc=.3](-0.8,-1.5)(11.0,4.55)
\rput(0.0,4.25){\textcolor{black}{\large $\boldsymbol{K}$}}
\pscurve[linecolor=blue]{->}%
(2,1)(3.55,2.55)(4,1)(3.25,-0.25)(2,1)
\pscurve*[linecolor=gray!30]{->}%
(2,1)(2.5,1.2)(3,1)(3.25,-0.25)(2,1)
\pscurve[linecolor=blue,arrowscale=2]{->}%
(2,1)(3.55,2.55)(4,1)(3.25,-0.25)(2,1)
\pscurve[linecolor=black,arrowscale=2]{->}(4,0)(3,-1)(1,-1)(0,0)
\pscurve[linecolor=black,arrowscale=2]{->}(0,0)(1,1)(3,1)(4,0)
\pscurve[linecolor=blue,arrowscale=2]{->}%
(1.00,0.5)(1.50,0.85)(2.00,1.00)(3,1)(3.25,-0.25)(1.5,-0.55)(1.00,0.5)
\pscurve[linecolor=black,arrowscale=2]{->}%
(2,0)(1.50,0.85)(2,2)(3,3)(4,3)(4.5,1.5)(3.25,-0.25)(2,0)
\psdots[dotstyle=o, linewidth=1.2pt,linecolor = green, fillcolor = red]%
(2,1)(3.55,2.55)(4,1)(3.25,-0.25)
\psdots[dotstyle=o, linewidth=1.2pt,linecolor = black, fillcolor = yellow]%
(2,0)(1.50,0.85)(2,2)(3,3)(4,3)(4.5,1.5)(3.25,-0.25)(2,0)
\rput(3.5,2.8){\textcolor{black}{\large $\boldsymbol{\cyc E_2}$}}
\rput(3.5,3.3){\textcolor{black}{\large $\boldsymbol{\cyc E_1}$}}
\rput(1.8,2.5){\textcolor{black}{\large $\boldsymbol{\vNrv E}$}}
\psdots[dotstyle=o, linewidth=1.2pt,linecolor = green, fillcolor = red]%
(3.25,-0.25)(3.0,1)(1.00,0.5)(2.00,1.00)(2.25,-0.70)(1.5,0.85)(3.25,-0.25)
\rput(2.0,-0.95){\textcolor{black}{\large $\boldsymbol{\cyc A_1}$}}
\rput(2.0,-0.45){\textcolor{black}{\large $\boldsymbol{\cyc A_2}$}}
\psdots[dotstyle=o, linewidth=1.2pt,linecolor = black, fillcolor = yellow]%
(0,0)(1,-1)(1,1)(3,-1)(4,0)
\rput(2.7,1.25){\textcolor{black}{\large $\boldsymbol{v_3}$}}
\rput(-0.25,0){\textcolor{black}{\large $\boldsymbol{v_1}$}}
\rput(0.0,0.75){\textcolor{black}{\large $\boldsymbol{\vNrv A}$}}
\rput(4.35,0.0){\textcolor{black}{\large $\boldsymbol{v_6}$}}
\pscurve[linecolor=black,arrowscale=2]{->}(10,0)(9,-1)(7,-1)(6,0)
\pscurve[linecolor=black,arrowscale=2]{->}(6,0)(7,3.5)(9,3.5)(10,0)
\pscurve[linecolor=blue,arrowscale=2]{->}%
(10,0)(9.0,3.0)(7.0,3.0)(6.8,0.0)(7.2,-0.5)(9.0,-0.5)(10,0)
\psdots[dotstyle=o, linewidth=1.2pt,linecolor = black, fillcolor = yellow]%
(6,0)(7,3.5)(9,-1)(7,-1)(10,0)(9,3.5)
\psdots[dotstyle=o, linewidth=1.2pt,linecolor = black, fillcolor = red]%
(10,0)(9.0,3.0)(7.0,3.0)(6.8,0.0)(7.2,-0.5)(9.0,-0.5)(10,0)
%
\pscurve[linecolor=black,arrowscale=2]{->}(7,1)(7.5,2.5)(8.5,2.75)(9.25,2)(8.5,0)(7.5,0)(7,1)
\psdots[dotstyle=o, linewidth=1.2pt,linecolor = black, fillcolor = yellow]%
(7,1)(7.5,2.5)(8.5,2.75)(9.25,2)(8.5,0)(7.5,0)(7,1)
\psline*[linecolor=gray!30]{->}%
(7,1)(7.75,2.25)(8.25,2.25)(8.75,2)(8.68,1)(8.25,0.5)(7.75,0.5)(7,1)
\psline[linecolor=blue]{->}%
(7,1)(7.75,2.25)(8.25,2.25)(8.75,2)(8.68,1)(8.25,0.5)(7.75,0.5)(7,1)
\psdots[dotstyle=o, linewidth=1.2pt,linecolor = black, fillcolor = red]%
(7,1)(7.75,2.25)(8.25,2.25)(8.68,1)(8.75,2)(8.25,0.5)(7.75,0.5)(7,1)
\rput(7.25,1.55){\textcolor{black}{\large $\boldsymbol{\vNrv H}$}}
\rput(8.0,-0.25){\textcolor{black}{\large $\boldsymbol{\cyc H_1}$}}
\rput(8.0,2.0){\textcolor{black}{\large $\boldsymbol{\cyc H_2}$}}
\rput(5.62,0){\textcolor{black}{\large $\boldsymbol{v_{10}}$}}
\rput(10.35,0.0){\textcolor{black}{\large $\boldsymbol{v_{13}}$}}
\rput(6.15,0.75){\textcolor{black}{\large $\boldsymbol{\vNrv B}$}}
\rput(8.0,4.0){\textcolor{black}{\large $\boldsymbol{\cyc B_1}$}}
\rput(8.0,3.45){\textcolor{black}{\large $\boldsymbol{\cyc B_2}$}}
\end{pspicture}
\caption[]{Vortex Nerves with Overlapping Interiors}
\label{fig:7-overlappingVortexNerves}
\end{figure}

\subsection{Overlap Connectedness Proximity Space}\label{sec:overlapConnectedness}
In this section, weak and strong connectedness proximities of skeletons arise when we consider pairs of vortex cycles with overlapping interiors.   Let $K$ be a collection of vortex cycles equipped with the proximity $\sconn$, which is a form of the strong proximity $\sn$~\cite[\S 1.9, pp. 28-30]{Peters2016ComputationalProximity}.   The weak and strong forms of $\sconn$ satisfy the following axioms.\\
\vspace{3mm}
\begin{description}
\item[{\bf P4overlap [weak option]}]  $\Int A\ \cap\ \Int B\neq \emptyset\ \Rightarrow\ A\ \sconn\ B$.
\item[{\bf P5overlap [strong option]}] $A\ \sconn\ B \Rightarrow A\cap B\neq \emptyset$
\end{description}
\mbox{}\\
\vspace{3mm}

Axiom {\bf P4overlap} is a rewrite of the \u{C}ech axiom {\bf P4} and axiom {\bf P5overlap} is addition to the usual \u{C}ech axioms.
It is easy to see that $\sn$ satisfies the remaining \u{C}ech axioms after replacing $\delta$ with $\sn$.   Let $A,B,C\in K$, a cell complex space equipped with the proximity $\sconn$, which satisfies the following axioms.\\
\vspace{3mm}

\noindent {\bf Overlap Connectedness proximity axioms}.
\begin{compactenum}[{P}1{intConn}]
\item $A\cap B = \emptyset \ \Leftrightarrow A\ \sconn\ B$, {\em i.e.}, the sets of skeletons $A$ and $B$ are not close ($A$ and $B$ are far from each other).
\item $A\ \sconn\ B\ \implies B\ \sconn\ A$, {\em i.e.}, $A$ overlaps (is close to) $B$ implies $B$ overlaps (is close to) $A$.
\item $A\ \sconn\ \left(B\cup C\right)\ \implies A\ \sconn\ B\ \mbox{or}\ A\ \sconn\ C$.
\item $\Int A\ \cap\ \Int B\neq \emptyset\ \Rightarrow\ A\ \sconn\ B$ (Weak Overlap Connectedness Axiom).
\item  $A\ \sconn\ B \Rightarrow A\cap B\neq \emptyset$  (Strong Overlap Connectedness Axiom). 
\qquad \textcolor{blue}{\Squaresteel}
\end{compactenum}
\mbox{}\\
\vspace{3mm}

An overlap connectedness space is denoted by $\left(K,\sconn\right)$.   Skeletons $A,B$ in $K$ are close, provided the interior $\Int A$ has nonempty intersection with the interior $\Int A$.

\begin{theorem}\label{thm:overlapSkeletonSpace}
Let $K$ be a collection of vortex nerves in a planar cell complex.   The space $K$ equipped with the relation $\sconn$ is a proximity space.
\end{theorem}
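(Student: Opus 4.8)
The plan is to verify the three Smirnov proximity space properties Q1--Q3 for $(K,\sconn)$, exactly as was done for $(K,\conn)$ in Lemma~\ref{lemma:skeletonProximitySpace}, but now reading closeness through the overlap connectedness axioms P1intConn--P5intConn. First I would record the structural reduction already used in Theorem~\ref{thm:vortexNerveSpace}: each vortex nerve is a collection of intersecting $1$-cycles, every $1$-cycle is a skeleton, so $K$ is again a collection of skeletons, this time carrying the relation $\sconn$. Thus I may argue throughout with skeletons $A,B,C\in K$ and their interiors $\Int A,\Int B,\Int C$.

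Next I would dispatch the two routine properties. Property Q3 (no set is close to the empty set) follows from axiom P1intConn: since $A\cap\emptyset=\emptyset$ for every $A\in K$, we get $A\ \notsn\ \emptyset$, i.e.\ $\near(A,\emptyset)=1$. Property Q2 (intersecting sets are close) also follows from P1intConn, since a nonempty intersection is precisely the negation of the far relation; alternatively, whenever the interiors meet, $\Int A\cap\Int B\neq\emptyset$, the weak overlap axiom P4intConn already yields $A\ \sconn\ B$ and hence $\near(A,B)=0$.

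The substantive step is Q1: for $A\subseteq B$ and any $C\in K$ I must show $\near(A,C)\geq\near(B,C)$, and this is where the strong overlap axiom P5intConn does the real work. The only way the inequality could fail is to have $\near(A,C)=0$ while $\near(B,C)=1$, so I would rule this out. Suppose $A\ \sconn\ C$. By the strong axiom P5intConn, $A\cap C\neq\emptyset$; since $A\subseteq B$, a fortiori $B\cap C\neq\emptyset$; and by P1intConn a nonempty intersection forces $B\ \sconn\ C$, so $\near(B,C)=0$. Hence $\near(A,C)=0\Rightarrow\near(B,C)=0$, the offending case never arises, and $\near(A,C)\geq\near(B,C)$ holds in every case. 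Having checked Q1--Q3, I conclude that $(K,\sconn)$ is a proximity space.

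The place to be most careful, and the main obstacle, is the interplay between the weak axiom P4intConn and the strong axiom P5intConn, which push in opposite directions: P4intConn injects \emph{interior} overlap into closeness, while P5intConn extracts full (boundary-inclusive) intersection from closeness. The monotonicity needed for Q1 succeeds precisely because the strong axiom converts the closeness $A\ \sconn\ C$ into a genuine intersection $A\cap C\neq\emptyset$, which is then inherited by the superset $B$; without P5intConn the interior-overlap reading alone would not guarantee that passing from $A$ to $B\supseteq A$ preserves closeness. I expect the argument to be short and closely parallel to Lemma~\ref{lemma:skeletonProximitySpace}, with this compatibility of the two overlap axioms being the only genuine subtlety.
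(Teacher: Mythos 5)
Your proof is correct, but it takes a genuinely different route from the paper. The paper disposes of this theorem in one line: it observes that each vortex nerve is a collection of intersecting $1$-cycles, hence of skeletons, and then cites Lemma~\ref{lemma:skeletonProximitySpace} --- the lemma proved for the relation $\conn$ --- declaring the result immediate. You instead re-verify the three Smirnov properties directly for $\sconn$, reading closeness through the overlap axioms and, in particular, using the strong axiom ($A\ \sconn\ B \Rightarrow A\cap B\neq\emptyset$) to convert closeness of $A$ to $C$ into a genuine intersection that is inherited by any superset $B\supseteq A$, which is exactly what the monotonicity property Q1 requires. What your approach buys is that it actually addresses the relation named in the statement: the paper's citation of Lemma~\ref{lemma:skeletonProximitySpace} transfers a result established for $\conn$ to the different relation $\sconn$ without explaining why the verification carries over, whereas your argument makes the needed compatibility between the weak (interior-overlap) and strong (full-intersection) axioms explicit. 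What the paper's approach buys is brevity and a uniform reduction pattern reused across Theorems~\ref{thm:vortexNerveSpace}, \ref{thm:overlapSkeletonSpace} and \ref{thm:descriptiveOverlapVortexCyclesSpace}, at the cost of leaving that transfer implicit. One small caution in your write-up: you lean on axiom P1intConn as a biconditional characterizing the \emph{far} relation; as printed in the paper that axiom has an evident typographical slip (it equates $A\cap B=\emptyset$ with $A\ \sconn\ B$ while glossing it as ``not close''), so you are reading it as intended rather than as written, and it would be worth saying so.
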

\begin{proof}
The result follows from Lemma~\ref{lemma:skeletonProximitySpace}, since $K$ is also a collection of skeletons equipped with the proximity $\conn$.
\end{proof}

\begin{example}{\bf Overlapping Vortex Nerves}.\\
Two pairs of overlapping vortex nerves are represented in Fig.~\ref{fig:7-overlappingVortexNerves}, namely, $\vNrv A\ \sconn\ \vNrv E$ and $\vNrv B\ \sconn\ \vNrv H$.    In the case of the pair of vortex nerves $vNrv A,vNrv E$, the gray region for these nerves in Fig.~\ref{fig:7-overlappingVortexNerves} represents the nonempty intersection of the interior of the 1-cycle $\Int \cyc A_2\in \vNrv A$ and the interior of the 1-cycle $\Int \cyc E_2\in \vNrv E$.   From axiom P4intConn, we have
\begin{align*}
\Int \cyc A_2\ \cap\ \Int \cyc E_2\neq \emptyset\ &\Rightarrow\ \cyc A_2\ \sconn\ \cyc E_2\\
                                                    &\Rightarrow\ \vNrv A\ \sconn\ \vNrv E,\ \mbox{Axiom P5intConn, we have}\\
\vNrv A\ \sconn\ \vNrv E &\Rightarrow\ \Int \cyc A_2\ \cap\ \Int \cyc E_2\neq \emptyset.
\end{align*}
Concentric vortex nerves $\vNrv B,\vNrv H$ are also represented in Fig.~\ref{fig:7-overlappingVortexNerves},  The interior $Int \cyc H_2$ is represented in Fig.~\ref{fig:cycleInterior}in the vortex nerve $\vNrv H$, which is in the interior of vortex nerve $\vNrv B$.   Again, from axiom P4intConn, we have
\begin{align*}
\Int \vNrv\ B\ \cap\ \Int\ \vNrv\ H\neq \emptyset\ &\Rightarrow\ \vNrv\ B\ \sconn\ \vNrv\ H,\ \mbox{and from Axiom P5intConn, we have}\\
\vNrv B\ \sconn\ \vNrv H &\Rightarrow\ \Int \vNrv\ B\ \cap\ \Int\ \vNrv\ H\neq \emptyset.\mbox{\qquad \textcolor{blue}{\Squaresteel}}
\end{align*}
\end{example}

\begin{example}{\bf Spacetime Vortex Cycles: Overlapping Electromagnetic Vortices}.\\
I.V. Dzedolik observes that an electromagnetic vortex is formed by photons that possess some net angular momentum about the longitudinal axis of a dielectric waveguide~\cite[p. 135]{Dzedolik2005photonFluxVortexProperties}.   Photons are almost massless objects that carry energy from an emitter to an absorber~\cite{vanLeunen2018HilbertBookProject}.  Modeling spiraling vortices as vortex cycles equipped with the $\sconn$ proximity suggests the possibility of obtaining an expanded range of measurements in vortex optics.   N.M. Litchinitser observes that vortex-preshaped femtosecond laser pulses indicate the possibility of achieving repeatable and predictable spatial and temporal distribution in using metamaterials in light filamentation~\cite[p. 1055]{Litchinitser2012structuredLight}.   The overlap connectedness proximity space approach to characterizing, analysing and modelling neighboring photons gains strength by considering recent work by M. Hance on isolating and comparing different forms of photons (and photon vortical flux)~\cite[\S 4, pp. 8-11]{Hance2013photonPhysics}.
\qquad \textcolor{blue}{\Squaresteel}
\end{example}

\subsection{Descriptive Connectedness Proximity}
In this section, weak and strong descriptive connectedness proximities of skeletons arise when we consider pairs of vortex cycles with matching description.   A vortex cycle description is a feature vector that contains features values extracted from vortices with what are known as probe functions.   Let $K$ be a collection of vortex cycles equipped with the descriptive proximity $\dsconn$, which is an extension of the descriptive proximity $\snd$~\cite[\S 3-4, pp. 95-98]{DiConcilio2018MCSdescriptiveProximities}.   The mapping $\Phi:K\longrightarrow \mathbb{R}^n$ yields an $n$-dimensional feature vector in Euclidean space $\mathbb{R}^n$ either a vortex $\cyc A\in K$ (denoted by $\Phi(\cyc A)$) or a vortex cycle $\vcyc E$ in $K$ (denoted by $\Phi(\vcyc E)$) or a vortex nerve $\vNrv H$ in $K$ (denoted by $\Phi(\vNrv H)$).   For the axioms for a descriptive proximity, the usual set intersection is replaced by descriptive intersection~\cite[\S 3]{Peters2013Intersection} (denoted by $\dcap$) defined by
\[
 A \dcap B = \{x \in A \cup B: \Phi(x) \in \Phi(A), \ \Phi(x) \in \Phi(B) \}.
\]

The descriptive closure of $A$ (denoted by $\dcl A$)~\cite[\S 1.4, p. 16]{Peters2016ComputationalProximity} is defined by
\[
\dcl A = \left\{x\in K: x\dsconn A\right\}.
\]

The weak and strong forms of $\dsconn$ satisfy the following axioms.\\
\vspace{3mm}
\begin{description}
\item[{\bf P$_\Phi$4 [weak option]}]  $\Int A\ \dcap\ \Int B\neq \emptyset\ \Rightarrow\ A\ \dsconn\ B$.
\item[{\bf P$_\Phi$5 option]}] $A\ \dsconn\ B \Rightarrow A\dcap B\neq \emptyset$
\end{description}
\mbox{}\\
\vspace{3mm}

\begin{figure}[!ht]
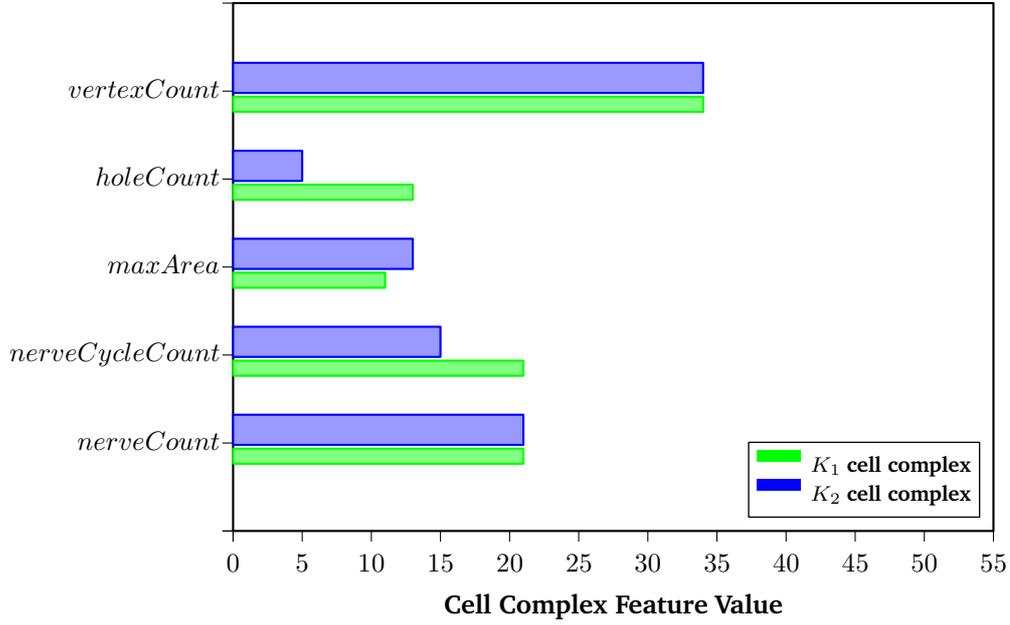

  \psset{llx=-1.5cm,lly=-1.5cm,xAxisLabel=\textbf{Cell Complex Feature Value}, xAxisLabelPos={c,-1cm},yAxisLabel=\textbf{},yAxisLabelPos={-1.5cm,c}, yLabels={,nerveCount, nerveCycleCount,maxArea,holeCount,vertexCount}}
\savedata{\mydata}[{21,0.85},{21,1.85},{11,2.85},{13,3.85},{34,4.85}]
\savedata{\data}[{21,1.15},{15,2.15},{13,3.15},{5,4.15},{34,5.15}]
\pslegend[rb]{\green\rule[1ex]{2em}{4pt} & $K_1\ \mbox{\bf cell complex}$\\ \blue\rule[1ex]{2em}{4pt} & $K_2\ \mbox{\bf cell complex}$\\}
\begin{psgraph}[axesstyle=frame,labels=x,ticksize=-4pt 0,Dx=5](0,0)(55,6){10cm }{7cm} \listplot[plotstyle=ybar,fillcolor=blue!40,linecolor=blue,barwidth=4mm, fillstyle=solid]{\data}

\listplot[plotstyle=ybar,fillcolor=green!50,linecolor=green,barwidth=2mm, fillstyle=solid,opacity=1]{\mydata} 
\end{psgraph}
\caption[]{Comparison of Cell Complex Feature Values}
\label{fig:55-vortexCycleFeatures}
\end{figure}
$\mbox{}$\\
\vspace{3mm}

Axiom {\bf P}$_\Phi${\bf 4} is a rewrite of the \u{C}ech axiom {\bf P4} and axiom {\bf P}$_\Phi${\bf 5} is an addition to the usual \u{C}ech axioms.
It is easy to see that $\dsconn$ satisfies the remaining \u{C}ech axioms after replacing $\delta$ with $\dsconn$.   Let $A,B,C\in K$, a cell complex space equipped with the proximity $\dsconn$, which satisfies the following axioms.\\
\vspace{3mm}

\noindent {\bf Descriptive Overlap Connectedness proximity axioms}.
\begin{compactenum}[{P$_\Phi$}1{dConn}]
\item $A\dcap B = \emptyset \ \Leftrightarrow A\ \fardsconn\ B$, {\em i.e.}, the sets of skeletons $A$ and $B$ are not descriptively close ($A$ and $B$ are far from each other).
\item $A\ \dsconn\ B\ \implies B\ \dsconn\ A$, {\em i.e.}, $A$ is descriptively close to $B$ implies $B$ is descriptively close to $A$.
\item $A\ \dsconn\ \left(B\cup C\right)\ \implies A\ \dsconn\ B\ \mbox{or}\ A\ \dsconn\ C$.
\item $\Int A\ \dcap\ \Int B\neq \emptyset\ \Rightarrow\ A\ \dsconn\ B$ (Weak Descriptive Connectedness Axiom).
\item  $A\ \dsconn\ B \Rightarrow A\dcap B\neq \emptyset$  (Strong Descriptive Connectedness Axiom). 
\qquad \textcolor{blue}{\Squaresteel}
\end{compactenum}
\mbox{}\\
\vspace{3mm}

A descriptive overlap connectedness space is denoted by $\left(K,\dsconn\right)$.   Skeletons $A,B$ in $K$ are close descriptively, provided the interior $\Int A$ has nonempty descriptive intersection with the interior $\Int A$.   This form of proximity has many applications, since we often want to compare objects such as 1-cycles by themselves or vortex cycles or the more complex vortex nerves that do not overlap spatially or at the same time.

\begin{figure}
\centering
\includegraphics[width=70mm]{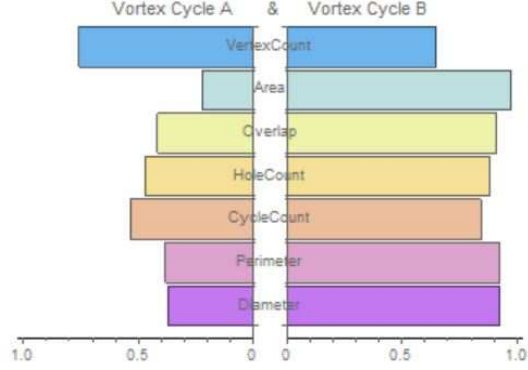}
\caption[]{Comparison of Vortex Cell Feature Values}
\label{fig:21-vortexCycleFeatures}
\end{figure}

\begin{example} {\bf Descriptive Connectedness Overlap of Disjoint Vortex Cycles in Spacetime}.\\
Let $\vcyc A, \vcyc B$ be a pair of vortex cycles in a collection of vortex cycles equipped with the proximities $\sconn$ and $\dsconn$.   A assume theses vortices represent non-overlapping electromagnetic vortexes that have matching descriptions in spacetime, {\em e.g.}, $\Phi(\vcyc A) = \Phi(\vcyc B) = $ (persistence duration).   That is, the length of time that $\vcyc A$ persists equals the duration of  $\vcyc B$.   In that case, $\vcyc A\ \dsconn\  \vcyc B$. 
 \qquad \textcolor{blue}{\Squaresteel}
\end{example}

\begin{example} {\bf Descriptive Connectedness Overlap of Cell Complexes}.\\
The bar graph\footnote{Many thanks to M.Z. Ahmad for the\ \LaTeX\ script used to display this bar graph, which does not depend on an external file.} in Fig.~\ref{fig:55-vortexCycleFeatures} compares feature values for a pair of cell complexes, namely,vertex count, hole count, maximum vortex cycle area, nerve cycle count and nerve count.   From the bar graph, $K_1\ \dsconn\ K_2$, since
\begin{align*}
\Phi(K_1\mbox{vertexCount}) &= \Phi(K_2\mbox{vertexCount}) = 35,\ \mbox{and}\\
\Phi(K_1\mbox{nerveCount}) &= \Phi(K_2\mbox{nerveCount}) = 21.
\end{align*}
This is the case, even though the hole count and nerve cycle count are far apart.
 \qquad \textcolor{blue}{\Squaresteel}
\end{example}

\begin{example} {\bf Absence of Descriptive Connectedness of Sample Vortex Cycles}.\\
The bar graph in Fig.~\ref{fig:21-vortexCycleFeatures} compares normalized feature values for a pair of sample vortex cycles $\vcyc A,\vcyc B$, namely, vertex count, vortex cycle area, overlap ({\em i.e.}, number of overlapping 1-cycles in a vortex cycle), hole count, cycle count, perimeter ({\em i.e.}, length of the boundary of a vortex cycle), diameter ({\em i.e.}, maximum distance between a pair of vertices on the boundary of a vortex cycle).   From the bar graph, it is apparent that $\vcyc A\ \fardsconn\ \vcyc B$, since there are no matching feature values for the sample pair of vortex cycles.
 \qquad \textcolor{blue}{\Squaresteel}
\end{example}

\begin{theorem}\label{thm:descriptiveOverlapVortexCyclesSpace}
Let $K$ be a collection of vortex cycles in a planar cell complex.   The space $K$ equipped with the relation $\dsconn$ is a proximity space.
\end{theorem}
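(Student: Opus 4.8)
The plan is to verify the three Smirnov proximity space properties Q1--Q3 for $\left(K,\dsconn\right)$, following the same template used in the proof of Lemma~\ref{lemma:skeletonProximitySpace} and Theorem~\ref{thm:overlapSkeletonSpace}, but with the descriptive intersection $\dcap$ replacing ordinary intersection and $\dsconn$ replacing $\conn$. Since each vortex cycle is a collection of 1-cycles and each 1-cycle is a skeleton, $K$ is again a collection of skeletons, now carrying the descriptive overlap connectedness proximity governed by the Descriptive Overlap Connectedness proximity axioms.

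First I would dispose of the two easy properties. Property Q3 (no set is descriptively close to the empty set) is immediate from the first descriptive axiom, since $A\dcap \emptyset = \emptyset$ forces $A\ \fardsconn\ \emptyset$, {\em i.e.} $\delta(A,\emptyset) = 1$. Property Q2 (descriptively intersecting sets are close) follows from the Weak and Strong Descriptive Connectedness Axioms together with the symmetry axiom: whenever $A\dcap B\neq \emptyset$ the relevant interiors share a common description, so $A\ \dsconn\ B$ and accordingly $\delta(A,B) = 0$.

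The substance lies in property Q1: for $A\subseteq B$ and any $C\in K$, I must show $\delta(A,C)\geq \delta(B,C)$. The key observation is that the description map is monotone under inclusion, {\em i.e.} $A\subseteq B$ implies $\Phi(A)\subseteq \Phi(B)$. Consequently, if $A\dcap C\neq \emptyset$, then any witness $x\in A\cup C$ with $\Phi(x)\in \Phi(A)$ and $\Phi(x)\in \Phi(C)$ also satisfies $x\in B\cup C$ and $\Phi(x)\in \Phi(B)$, so $x\in B\dcap C$ and hence $B\dcap C\neq \emptyset$. In the language of the Smirnov metric this reads $\delta(A,C) = 0 \Rightarrow \delta(B,C) = 0$; and when $\delta(A,C) = 1$ the inequality $\delta(A,C)\geq \delta(B,C)$ holds trivially because the metric takes values in $\left\{0,1\right\}$. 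Thus Q1 is satisfied in both the descriptively-close and descriptively-far cases, exactly as in Lemma~\ref{lemma:skeletonProximitySpace}, whence $\left(K,\dsconn\right)$ is a proximity space.

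The main obstacle I anticipate is the bookkeeping forced by $\dcap$ rather than $\cap$: because descriptive intersection can be nonempty even when the spatial intersection is empty, one cannot simply quote the spatial conclusion of Lemma~\ref{lemma:skeletonProximitySpace}. The delicate point is therefore establishing the monotonicity $\Phi(A)\subseteq \Phi(B)$ from $A\subseteq B$ and confirming that a descriptive witness for $A\dcap C$ persists as a witness for $B\dcap C$. Once that is secured, the verification of Q1 --- and with it the conclusion that $K$ equipped with $\dsconn$ is a proximity space --- proceeds precisely as in the connectedness case.
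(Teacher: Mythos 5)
Your proof is correct, but it takes a genuinely different route from the paper's. The paper disposes of this theorem in one line: it simply cites Lemma~\ref{lemma:skeletonProximitySpace} (the connectedness proximity space lemma) on the grounds that each vortex cycle is a collection of skeletons, now ``equipped with the proximity $\dsconn$.'' You instead re-verify the Smirnov properties Q1--Q3 from scratch in the descriptive setting, replacing $\cap$ by $\dcap$ throughout and proving the monotonicity $A\subseteq B \Rightarrow \Phi(A)\subseteq\Phi(B)$ so that a descriptive witness for $A\dcap C$ persists as one for $B\dcap C$. Your closing remark --- that one cannot simply quote the spatial conclusion of Lemma~\ref{lemma:skeletonProximitySpace} because $\dcap$ can be nonempty when the spatial intersection is empty --- identifies exactly what the paper's one-line reduction glosses over: that lemma is stated and proved for $\conn$ and ordinary intersection, not for $\dsconn$ and $\dcap$. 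So your version is longer but more self-contained and more rigorous where it matters; the paper's version buys brevity at the cost of leaving the transfer from spatial to descriptive intersection implicit. One small point worth tightening: for Q2 you can get $A\dcap B\neq\emptyset \Rightarrow A\ \dsconn\ B$ directly from the contrapositive of axiom P$_\Phi$1dConn, without routing through the interiors in the Weak Descriptive Connectedness Axiom, since $A\dcap B\neq\emptyset$ does not by itself guarantee $\Int A\ \dcap\ \Int B\neq\emptyset$.
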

\begin{proof}
The result follows from Lemma~\ref{lemma:skeletonProximitySpace}, since each vortex cycle in $K$ is also a collection of skeletons equipped with the proximity $\dsconn$.
\end{proof}

\begin{corollary}\label{thm:descriptiveOverlapVortexNervesSpace}
Let $K$ be a collection of vortex nerves in a planar cell complex.   The space $K$ equipped with the relation $\dsconn$ is a proximity space.
\end{corollary}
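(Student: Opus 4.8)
The plan is to recognize this as an immediate specialization of Theorem~\ref{thm:descriptiveOverlapVortexCyclesSpace}. By the definition recalled before Theorem~\ref{thm:vortexNerveSpace}, a vortex nerve $\vNrv A$ is precisely a vortex cycle $\vcyc A$ whose constituent 1-cycles share at least one common vertex; that is, a vortex nerve is a particular kind of vortex cycle. Consequently, a collection $K$ of vortex nerves is, a fortiori, a collection of vortex cycles, and Theorem~\ref{thm:descriptiveOverlapVortexCyclesSpace} already asserts that any collection of vortex cycles equipped with $\dsconn$ is a proximity space. So the one-line route is: vortex nerves are vortex cycles, hence $\left(K,\dsconn\right)$ is a proximity space by Theorem~\ref{thm:descriptiveOverlapVortexCyclesSpace}.

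First I would unwind this to the underlying skeleton argument, mirroring the proofs of Theorem~\ref{thm:vortexNerveSpace} and Theorem~\ref{thm:descriptiveOverlapVortexCyclesSpace}. Each 1-cycle is a skeleton, so each vortex nerve is a collection of intersecting skeletons, and therefore $K$ is itself a collection of skeletons. Then I would invoke Lemma~\ref{lemma:skeletonProximitySpace}, checking that its verification of the Smirnov properties \textbf{Q1}--\textbf{Q3} goes through verbatim once ordinary intersection $\cap$ is replaced by the descriptive intersection $\dcap$ and $\conn$ by $\dsconn$. Concretely, for $A,B,C\in K$ with $C$ descriptively contained in $A\dcup B$, the $\Phi$-matching of descriptions yields $A\ \dsconn\ C$ and $B\ \dsconn\ C$, so $\delta(A,C)=0=\delta(B,C)$; and when $\left(A\dcup B\right)\dcap C=\emptyset$ one gets $\delta(A,C)=1=\delta(B,C)$. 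In either case $\delta(A,C)\geq\delta(B,C)$, giving \textbf{Q1}, while \textbf{Q2} and \textbf{Q3} follow from axioms \textbf{P}$_\Phi$\textbf{1dConn}--\textbf{P}$_\Phi$\textbf{5dConn}.

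The only genuinely non-routine point is that Lemma~\ref{lemma:skeletonProximitySpace} is stated for the relation $\conn$, so I must confirm that its monotonicity argument survives the passage to the descriptive setting. The delicate ingredient is the strong axiom \textbf{P}$_\Phi$\textbf{5dConn}, namely $A\ \dsconn\ B\Rightarrow A\dcap B\neq\emptyset$, which is an addition to the \u{C}ech axioms rather than a consequence of them; I would verify that it is compatible with the symmetry and union axioms and does not disturb the Q1 computation above. Since $\dsconn$ was defined to satisfy exactly these axioms, the verification transfers cleanly, and the corollary follows by restricting the already-established space of vortex cycles to the subclass of vortex nerves.
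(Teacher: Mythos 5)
Your proposal is correct and follows essentially the same route as the paper: the paper's proof is a one-line reduction to Theorem~\ref{thm:descriptiveOverlapVortexCyclesSpace}, observing that each vortex nerve is a collection of intersecting vortex cycles (equivalently, a special kind of vortex cycle), so the earlier result applies directly. Your additional unwinding to the skeleton-level argument of Lemma~\ref{lemma:skeletonProximitySpace} is consistent with, though more detailed than, what the paper records.
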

\begin{proof}
The result follows from Theorem~\ref{thm:descriptiveOverlapVortexCyclesSpace}, since each vortex nerve in $K$ is also a collection of intersecting vortex cycles equipped with the proximity $\dsconn$.
\end{proof}

\begin{example}\label{ex:matchingNerves}{\bf Non-Overlapping Vortex Nerve with Matching Descriptions}.\\
Let $K_{\vNrv}$ be a collection of vortex nerves in a planar cell complex the proximities $\conn$ and $\dsconn$.
Let $\vNrv A$ be a vortex nerve and let $\Phi(\vNrv A) = (\mbox{number of 1-cycles})$ be a description of the nerve based on one feature, namely, the number of 1-cycles in the nerve. Pairs of non-overlapping vortex nerves with matching descriptions are represented in Fig.~\ref{fig:7-overlappingVortexNerves}, namely, 
\begin{align*}
\vNrv A\ &\farconn\ \vNrv B\ \left(\mbox{Nerves $\vNrv A,\vNrv B$ do not overlap}\right),\\
\vNrv A\ &\dsconn\ \vNrv B,\ \mbox{since $\Phi\vNrv A) = \Phi(\vNrv B) = (2)$},\\
\vNrv A\ &\farconn\ \vNrv H\ \left(\mbox{Nerves  $\vNrv A,\vNrv H$ do not overlap}\right),\\
\vNrv A\ &\dsconn\ \vNrv H\ \mbox{since $\Phi(\vNrv A) = \Phi(\vNrv H) = (2)$},\\
\vNrv E\ &\farconn\ \vNrv B\ \left(\mbox{Nerves  $\vNrv E,\vNrv B$ do not overlap}\right),\\
\vNrv E\ &\dsconn\ \vNrv B\ \mbox{since $\Phi(\vNrv E) = \Phi(\cyc H_1) = (2)$},\\
\vNrv E\ &\farconn\ \vNrv H\ \left(\mbox{Nerves  $\vNrv E,\vNrv H$ do not overlap}\right),\\
\vNrv E\ &\dsconn\ \vNrv H\ \mbox{since $\Phi(\vNrv E) = \Phi(\vNrv H) = (2)$}.\mbox{\qquad \textcolor{blue}{\Squaresteel}} 
\end{align*}  
\end{example}

\begin{example}\label{ex:matchingCycles}{\bf Non-Overlapping Vortex Nerve Cycles with Matching Descriptions}.\\
Let $K_{\cyc}$ be a collection of 1-cycles in a planar cell complex the proximities $\conn$ and $\dsconn$.
Let $\cyc A$ be a 1-cycle in a vortex cycle and let $\Phi(\cyc A) = (\mbox{number of vertices})$ be a description of the cycle based on one feature, namely, the number of vertices in the cycle. Pairs of non-overlapping vortex nerves containing 1-cycles with matching descriptions are represented in Fig.~\ref{fig:7-overlappingVortexNerves}, namely, 
\begin{align*}
\cyc A_2\ &\farconn\ \cyc H_1\ \left(\mbox{Cycles $\cyc A_2,\cyc H_1$ do not overlap}\right),\\
\cyc A_2\ &\dsconn\ \cyc H_1,\ \mbox{since $\Phi(\cyc A_2) = \Phi(\cyc H_1) = (6)$},\\
\cyc A_2\ &\farconn\ \cyc B_2\ \left(\mbox{Cycles $\cyc A_2,\cyc B_2$ do not overlap}\right),\\
\cyc A_2\ &\dsconn\ \cyc B_2\ \mbox{since $\Phi(\cyc A_2) = \Phi(\cyc B_2) = (6)$},\\
\cyc A_1\ &\farconn\ \cyc H_1\ \left(\mbox{Cycles $\cyc A_1,\cyc H_1$ do not overlap}\right),\\
\cyc A_1\ &\dsconn\ \cyc H_1\ \mbox{since $\Phi(\cyc A_1) = \Phi(\cyc H_1) = (6)$},\\
\cyc A_1\ &\farconn\ \cyc B_2\ \left(\mbox{Cycles $\cyc A_1,\cyc B_2$ do not overlap}\right),\\
\cyc A_1\ &\dsconn\ \cyc B_2\ \mbox{since $\Phi(\cyc A_1) = \Phi(\cyc B_2) = (6)$}.\mbox{\qquad \textcolor{blue}{\Squaresteel}} 
\end{align*}  
\end{example}

\subsection{Vortex Cycle Spaces Equipped with Proximal Relators}
This section introduces a connectedness proximal relator~\cite{Peters2016relator} (denoted by $\mathscr{R}$), an extension of a Sz\'{a}z relator~\cite{Szaz1987}, which is a non-void collection of connectedness proximity relations on a nonempty cell complex $K$.   A space equipped with a proximal relator $\mathscr{R}$ is called a proximal relator space (denoted by $\left(K,\mathscr{R}\right)$).

\begin{example}{\bf Proximal Relator Space}.
Example~\ref{ex:matchingNerves} introduces a proximal relator space $\left(K_{\vNrv},\left\{\conn,\dsconn\right\}\right)$, useful in measuring, comparing, and classifying collections of vortex nerves that either have or do not have matching descriptions.   Similarly, Example~\ref{ex:matchingCycles} introduces a proximal relator  $\left(K_{\cyc},\left\{\conn,\dsconn\right\}\right)$, useful in the study of collections of 1-cycles that either have or do not have matching descriptions.
\qquad \textcolor{blue}{\Squaresteel}
\end{example}

The connection between $\sn$ and $\near$ is summarized in Lemma~\ref{thm:sconn-implies-near}.

\begin{lemma}\label{thm:sconn-implies-near}
Let $\left(K,\left\{\dsconn,\sconn,\conn\right\}\right)$ be a proximal relator space $K$, $A,B\subset K$.  Then 
\begin{compactenum}[{\rm (}$1${\rm )}]
\item $A\ \sconn\ B \Rightarrow A\ \conn\ B$.
\item $A\ \sconn\ B \Rightarrow A\ \dsconn\ B$.
\end{compactenum}
\end{lemma}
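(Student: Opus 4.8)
The plan is to prove both implications by the same two-stroke device: first extract ordinary (spatial) overlap from strong overlap connectedness, then feed that overlap into the defining axiom of the target relation. The single fact that carries the weight in both parts is the \emph{Strong Overlap Connectedness Axiom} \textbf{P5intConn}, namely $A\ \sconn\ B \Rightarrow A\cap B\neq\emptyset$. So the first step I would take, common to (1) and (2), is simply to apply \textbf{P5intConn} and obtain $A\cap B\neq\emptyset$.

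For part (1) the remaining step is immediate. The \emph{Connectedness Axiom} \textbf{P4conn} asserts $A\cap B\neq\emptyset\ \Leftrightarrow\ A\ \conn\ B$, so its forward direction converts the overlap $A\cap B\neq\emptyset$ into $A\ \conn\ B$. This closes the chain $A\ \sconn\ B \Rightarrow A\cap B\neq\emptyset \Rightarrow A\ \conn\ B$ with no additional work.

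For part (2) I would, after again producing $A\cap B\neq\emptyset$ from \textbf{P5intConn}, bridge from set intersection to \emph{descriptive} intersection. Picking any $x\in A\cap B$, the pointwise nature of the description map $\Phi$ gives $\Phi(x)\in\Phi(A)$ (since $x\in A$) and $\Phi(x)\in\Phi(B)$ (since $x\in B$), while $x\in A\cup B$; hence $x\in A\dcap B$ and therefore $A\dcap B\neq\emptyset$. The descriptive axiom \textbf{P$_\Phi$1dConn}, which is equivalent to the biconditional $A\dcap B\neq\emptyset\ \Leftrightarrow\ A\ \dsconn\ B$, then yields $A\ \dsconn\ B$ from its forward implication, completing $A\ \sconn\ B \Rightarrow A\cap B\neq\emptyset \Rightarrow A\dcap B\neq\emptyset \Rightarrow A\ \dsconn\ B$.

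The main obstacle is one of level-matching rather than depth. Axiom \textbf{P5intConn} delivers overlap of the whole sets, $A\cap B\neq\emptyset$, and not overlap of the interiors $\Int A\cap\Int B$, so in part (2) I must route the conclusion through the whole-set descriptive axiom \textbf{P$_\Phi$1dConn} rather than the interior-level weak axiom \textbf{P$_\Phi$4dConn}. The only genuinely substantive observation is that spatial overlap forces descriptive overlap, i.e. $A\cap B\neq\emptyset\Rightarrow A\dcap B\neq\emptyset$; this holds precisely because $\Phi$ attaches a feature value to each point, so every point common to $A$ and $B$ is automatically a point with a description common to $\Phi(A)$ and $\Phi(B)$, and thus a member of $A\dcap B$.
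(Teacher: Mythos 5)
Your proof is correct and follows essentially the same route as the paper's: both parts begin by extracting $A\cap B\neq\emptyset$ from the strong overlap axiom, part (1) then applies the connectedness axiom, and part (2) pushes a common point through $\Phi$ to obtain $A\ \dcap\ B\neq\emptyset$ before invoking a descriptive axiom. The only (cosmetic) divergence is at the final step of (2): the paper cites its axiom $\mathbf{P}_\Phi\mathbf{4}$ in the whole-set form $A\ \dcap\ B\neq\emptyset\Rightarrow A\ \dsconn\ B$ even though that axiom is officially stated for interiors, whereas you route through the contrapositive of $\mathbf{P}_\Phi\mathbf{1}$, which is the more faithful reading of the axioms as written.
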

\begin{proof}$\mbox{}$\\
{\rm (}$1${\rm )}: From Axiom P5conn, $A\ \sconn\ B$ implies $A\ \cap\ B\neq \emptyset$, which implies $A\ \conn\ B$.   From Lemma~\ref{lemma:ConnectnessImpliesOverlap}, $A\ \conn\ B$ implies $A\ \cap\ B\neq \emptyset$, which implies $A\ \near\ B$ (from \u{C}ech Axiom P4).\\
{\rm (}$2${\rm )}: From {\rm (}$1${\rm )}, there are $\cyc\ x\in A, \cyc\ y\in B$\ common to $A$ and $B$.  Hence, $\Phi(\cyc\ x) = \Phi(\cyc\ y)$, which implies $A\ \dcap\ B\neq \emptyset$.  Then, from the descriptive connectedness Axiom $P_{\Phi}$4\mbox{conn}, $A\ \dcap\ B \neq \emptyset \Rightarrow\ A\ \dsconn\ B$. This gives the desired result. 
\end{proof}

Let $\vNrv A$ be a vortex nerve.   By definition, $\vNrv A$ is collection of 1-cycles with nonempty intersection.   The boundary of $\vNrv A$ (denoted by $\bdy \vNrv A$) is a sequence of connected vertices.   That is, for each pair of vertices $v,v'\in \bdy \vNrv A$, there is a sequence of edges, starting with vertex $v$ and ending with vertex $v'$.   There are no loops in  $\bdy \vNrv A$.   Consequently, $\bdy \vNrv A$ defines a simple, closed polygonal curve.   The interior of $\bdy \vNrv A$ is nonempty, since $\Nrv A$ is a collection of filled polytopes.   Hence, by definition, a $\vNrv A$ is also a nerve shape.

\begin{theorem}\label{thm:spoke}
Let $\left(K,\left\{\dsconn,\sconn\right\}\right)$ be a proximal relator space with nerve vortices $\vNrv A,\vNrv B\in K$.  Then
\begin{compactenum}[{\rm (}$1${\rm )}]
\item $\vNrv A\ \sconn\ \vNrv B$ implies $\vNrv A\ \dsconn\ \vNrv B$.
\item A 1-cycle $\cyc E\ \in\ \vNrv A\cap \vNrv B$ implies $\cyc E\ \in\ \vNrv A\ \dcap\ \vNrv B$.
\item A 1-cycle $\cyc E\ \in\ \vNrv A\cap \vNrv B$ implies $\vNrv A\ \dsconn\ \vNrv B$.
\end{compactenum}
\end{theorem}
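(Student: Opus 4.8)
The plan is to handle the three claims in order, leaning almost entirely on the two facts already established in Lemma~\ref{thm:sconn-implies-near} together with the definition of the descriptive intersection $\dcap$. None of the three parts requires new machinery; the work is in correctly unwinding definitions and chaining the earlier results.

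For part (1) I would simply specialize Lemma~\ref{thm:sconn-implies-near}(2) to the subsets $A=\vNrv A$ and $B=\vNrv B$ of $K$. Since $\vNrv A\ \sconn\ \vNrv B$ is given by hypothesis, that lemma immediately yields $\vNrv A\ \dsconn\ \vNrv B$. No further argument is needed here, so this part is a one-line invocation.

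For part (2) I would unwind the definition $\vNrv A\ \dcap\ \vNrv B=\{x\in \vNrv A\cup \vNrv B:\Phi(x)\in\Phi(\vNrv A),\ \Phi(x)\in\Phi(\vNrv B)\}$. A 1-cycle $\cyc E\in \vNrv A\cap \vNrv B$ lies in both nerves, hence in the union $\vNrv A\cup \vNrv B$. Because $\cyc E$ is a single geometric object it carries one feature vector $\Phi(\cyc E)$; membership $\cyc E\in \vNrv A$ forces $\Phi(\cyc E)\in\Phi(\vNrv A)$, and likewise $\cyc E\in \vNrv B$ forces $\Phi(\cyc E)\in\Phi(\vNrv B)$. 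Both membership conditions in the definition of $\dcap$ are thus met, so $\cyc E\in \vNrv A\ \dcap\ \vNrv B$. For part (3) I would then chain part (2) with the descriptive overlap axioms: part (2) exhibits $\cyc E$ as a witness that $\vNrv A\ \dcap\ \vNrv B\neq\emptyset$, and invoking the descriptive connectedness axiom (exactly as it is used in the proof of Lemma~\ref{thm:sconn-implies-near}(2)) gives $\vNrv A\ \dsconn\ \vNrv B$.

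Since the individual steps are short, the one genuine care point—the place I would call the main obstacle—is the bookkeeping in part (3): the weak descriptive axiom is phrased in terms of interiors ($\Int A\ \dcap\ \Int B\neq\emptyset$), whereas part (2) produces a shared cycle in $\vNrv A\ \dcap\ \vNrv B$ without first locating it in the interior. I would resolve this exactly as the earlier lemma does, treating ``$A\ \dcap\ B\neq\emptyset\Rightarrow A\ \dsconn\ B$'' as the operative form of the descriptive axiom; alternatively, one notes that a shared inner 1-cycle $\cyc E$ in fact lies in the common interior of the two nerves, so that $\Int \vNrv A\ \dcap\ \Int \vNrv B\neq\emptyset$ and the interior-based axiom applies verbatim. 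Either reading closes the gap and delivers the desired conclusion.
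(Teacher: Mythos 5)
Your proposal is correct and follows essentially the same route as the paper: part (1) by invoking Lemma~\ref{thm:sconn-implies-near}(2), part (2) by unwinding the definition of $\dcap$ from the shared cycle $\cyc E$, and part (3) by chaining part (2) with the descriptive connectedness axiom in its ``$A\ \dcap\ B\neq\emptyset\Rightarrow A\ \dsconn\ B$'' form, exactly as the paper's Lemma~\ref{thm:sconn-implies-near}(2) uses it. If anything, your part (2) is cleaner than the paper's (which detours through an if-and-only-if appeal to Axioms P4conn/P5conn before reaching the same $\Phi(\cyc E)\in\Phi(\vNrv A)$, $\Phi(\cyc E)\in\Phi(\vNrv B)$ argument), and your explicit flagging of the interior-versus-set mismatch in the weak descriptive axiom addresses a gap the paper silently glosses over.
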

\begin{proof}$\mbox{}$\\
{\rm (}$1${\rm )}: Immediate from part {\rm (}$2${\rm )} of Lemma~\ref{thm:sconn-implies-near}.\\
{\rm (}$2${\rm )}: By definition, $\vNrv A,\vNrv B$ are nerve shapes.   From Axioms {\bf P4conn, P5conn}, $\cyc E\ \in\ \vNrv A\cap \vNrv B$, if and only if $\vNrv A\ \dsconn\ \vNrv B$.   Consequently, $\cyc E$ is common to $\vNrv A,\vNrv B$.   Then there is a cycle $\cyc E\in \Nrv A$ with the same description as a cycle $\cyc E\in \vNrv B$.   Let $\Phi(\cyc E)$ be a description of $\cyc E$.   Then, $\Phi(\cyc E)\in \Phi(\vNrv A) \& \in \Phi(\cyc E)\in \Phi(\vNrv B)$, since $\cyc E\ \in\ \vNrv A\cap \vNrv B$.   Hence, $\cyc E\ \in\ \vNrv A\ \dcap\ \vNrv B$.\\
{\rm (}$3${\rm )}: Immediate from {\rm (}$2${\rm )} and Lemma~\ref{thm:sconn-implies-near}.
\end{proof}

\section{Main Results}
This section gives some main results for collections of proximal vortex cycles and proximal vortex nerves.

\subsection{Topology on Vortex Cycle Spaces}
This section introduces the construction of topology (homology) classes of vortex cycles and vortex nerves.  Topology classes have proved to be useful in classifying physical objects such as quasi-crystals~\cite{Dareau2017arXivCrystalTopClasses} and in knowledge extraction~\cite{Fermi2018knowledgeExtraction}. Such classes provide a basis for knowledge extraction about proximal vortex cycles and nerves.   A strong beneficial side-effect of the construction of such classes is the ease with which the persistence of homology class objects can be computed (see, {\em e.g.},~\cite{Fermi2017arXivPersistentTopology},~\cite{Taimanov2017LNAIoilGasPersistence}).   More importantly, the construction of topology classes leads to problem size reduction (see, {\em e.g.},~\cite[\S 3.1, p. 5]{Pellikka2010computationalHomologyElectromagnetics}).

\begin{lemma}\label{lemma:CWtopology}
Let $K$ be a nonempty collection of finite skeletons on a finite cell complex $K$ that is a Hausdorff space equipped the proximity $\conn$.  From the pair $\left(K,\conn\right)$, a Whitehead Closure Finite Weak (CW) Topology can be constructed. 
\end{lemma}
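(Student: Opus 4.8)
The plan is to verify that the pair $\left(K,\conn\right)$ satisfies the two defining axioms of a Whitehead CW topology, namely \textbf{Closure Finiteness} and the \textbf{Weak topology} property, exactly as recalled in the excerpt's subsection on Neighbourhoods, Set Closure, Boundary, Interior and CW Topology. By Lemma~\ref{lemma:skeletonProximitySpace}, the hypotheses already give us that $\left(K,\conn\right)$ is a genuine proximity space, so the task reduces to manufacturing a topology $\tau$ on the cell complex and checking that $\tau$ has the two required properties. The natural candidate for $\tau$ is the topology generated by declaring a set closed exactly when its intersection with the closure of each cell is closed; the $\conn$ relation supplies the closure operator via the connectedness-based notion of adjacency between skeletons.

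First I would fix the closure operator induced by $\conn$: for a skeleton (cell) $A\in K$, the closure $\cl A$ consists of $A$ together with every vertex, edge, and lower-dimensional face reachable by a path internal to $A$, which is precisely the set of cells $B$ with $A\ \conn\ B$ through faces of $A$. Since each skeleton in the finite cell complex is built from finitely many 0-, 1-, and 2-cells, $\cl A$ is a finite union of cells. This immediately yields \textbf{Closure Finiteness}: for any cell $A$, the closure $\cl A$ meets only finitely many other cells, because $K$ itself is finite and every closure is a finite subcomplex. I would invoke Lemma~\ref{lemma:ConnectnessImpliesOverlap} here to translate $A\ \conn\ B$ into $A\cap B\neq\emptyset$, so that closeness under $\conn$ coincides with sharing a vertex, which is the combinatorial adjacency that makes closures finite.

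Next I would establish the \textbf{Weak topology} property: a set $A\in 2^K$ is closed precisely when $A\cap \cl B$ is closed for every cell $B$, i.e. $A\cap \cl B = \bdy(A\cap \cl B)\cup \Int(A\cap \cl B)$. Here I would use the decomposition of each skeleton into boundary and interior furnished by Theorem~\ref{thm:vortexCycle} and Lemma~\ref{lemma:shape} (each cell separates the plane, hence carries a well-defined boundary and nonempty interior), and then argue that the intersection $A\cap \cl B$ inherits this boundary--interior decomposition because both $A$ and $\cl B$ are finite subcomplexes. Since finiteness means $\tau$ is automatically determined by its restriction to the finitely many closures $\cl B$, the weak-topology condition is satisfied: a subset is closed in $K$ if and only if it is closed in each $\cl B$. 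Combining the two verified properties, $\tau$ is by definition a CW topology in the sense of Whitehead~\cite{Whitehead1949BAMS-CWtopology}, which is the desired conclusion.

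The main obstacle I anticipate is not the finiteness bookkeeping but reconciling the \emph{proximity}-generated topology with the \emph{cellular} closure structure: one must check that the closure operator coming from the $\conn$ relation actually agrees with the topological closure (boundary plus limit points) on cells, so that ``closed under $\conn$'' and ``closed in $\tau$'' mean the same thing. The bridge is Lemma~\ref{lemma:ConnectnessImpliesOverlap} together with axiom \textbf{P4conn}, which identify connectedness-closeness with nonempty intersection; once that identification is in place, the proximity closure and the cellular closure coincide on the finite complex, and the CW axioms follow. A secondary subtlety is confirming that $K$ is Hausdorff as a topological space and not merely as a hypothesis on the cell complex, but since $K$ sits inside the Euclidean plane $\mathbb{R}^2$ by Theorem~\ref{thm:vortexCycle}, the subspace topology is automatically Hausdorff, so this point requires only a brief remark.
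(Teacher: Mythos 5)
Your proposal is correct and follows essentially the same route as the paper's proof: both invoke Lemma~\ref{lemma:skeletonProximitySpace} to get the connectedness proximity space, derive closure finiteness from the finiteness of $K$, and obtain the weak topology property by observing that the intersection of two skeletons is itself a finite closed subcomplex (a vertex or set of edges) belonging to $K$. Your additional remarks on reconciling the proximity-induced closure with the cellular closure and on the Hausdorff condition are elaborations the paper leaves implicit, but they do not change the structure of the argument.
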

\begin{proof}$\mbox{}$\\
From Lemma~\ref{lemma:skeletonProximitySpace}, $\left(K,\conn\right)$ is a connectedness proximity space.  Let $\sk A, \sk B$ be skeletons in a finite cell complex $K$.  The closure $\cl(\sk A)$ is finite and includes the connected vertices on the boundary $\bdy(\sk A)$ and in the interior $\bdy(\sk A)$ of $\sk A$.   Since $K$ is finite, $\cl(\sk A)$ intersects a only a finite number of other skeletons in $K$.   The intersection $\sk A \cap \sk B\neq \emptyset$ is itself a finite skeleton, which can be either a single vertex or a set of edges common to $\sk A, \sk B$.  In that case, $\sk A\ \conn\ \sk B$.  By definition, $\sk A \cap \sk B$ is a skeleton in $K$.   Consequently, whenever $\sk A\ \conn\ \sk B$, then $\sk A \cap \sk B\in K$.   Hence, $\left(K,\conn\right)$ defines a Whitehead CW topology.
\end{proof}

\begin{theorem}\label{thm:vortexSconnCWtopology}
Let $K$ be a nonempty collection of finite skeletons on a finite cell complex $K$ that is a Hausdorff space equipped the proximity $\sconn$.  From the pair $\left(K,\sconn\right)$, a Whitehead Closure Finite Weak (CW) Topology can be constructed. 
\end{theorem}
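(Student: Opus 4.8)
The plan is to mirror the proof of Lemma~\ref{lemma:CWtopology} step for step, substituting the overlap connectedness proximity $\sconn$ for the connectedness proximity $\conn$, and to lean on the strong overlap axiom {\bf P5intConn} precisely at the point where the earlier proof invoked nonempty intersection. First I would record, via Theorem~\ref{thm:overlapSkeletonSpace} (whose proof reduces to Lemma~\ref{lemma:skeletonProximitySpace}), that $\left(K,\sconn\right)$ is a proximity space, so that the symmetry and separation properties underpinning a CW structure are in hand.

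Next I would verify closure finiteness. This step does not depend on which proximity $K$ carries: because $K$ is a finite cell complex, the closure $\cl(\sk A)$ of each skeleton is a finite union of the connected cells on $\bdy(\sk A)$ and in $\Int(\sk A)$, and therefore $\cl(\sk A)$ meets only finitely many other skeletons of $K$. The argument here is verbatim the one already used for $\left(K,\conn\right)$.

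The weak-topology condition is where the two proofs part company, and it is the hard part. In the $\conn$ case, nonempty intersection was delivered directly by axiom {\bf P4conn}; here the weak option {\bf P4intConn} only supplies overlap of \emph{interiors}, which does not by itself force the shared region to be a cell of $K$. The rescue is the strong overlap axiom {\bf P5intConn}: whenever $\sk A\ \sconn\ \sk B$, we obtain $\sk A\cap \sk B\neq \emptyset$. The delicate point I would then have to settle is that this intersection is not merely a nonempty planar set but an honest finite skeleton of $K$ --- a common vertex, a common set of edges, or a common filled face. Here I would use the observation recorded after Table~\ref{tab:skeleton} that each skeleton is an intersection of finitely many closed half-planes; the intersection of two such polytopes is again a polytope, hence a skeleton, so $\sk A\cap \sk B\in K$.

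Having both properties --- $\cl(\sk A)$ meets only finitely many skeletons (closure finiteness) and $\sk A\ \sconn\ \sk B \Rightarrow \sk A\cap \sk B\in K$ (weak topology) --- I would conclude that the topology $\tau$ generated on $K$ by $\sconn$ satisfies both defining CW-properties, so $\left(K,\sconn\right)$ determines a Whitehead CW topology, completing the proof. I expect the half-plane/polytope argument justifying that the interior overlap is realized by a genuine subcomplex to be the only place where care beyond the $\conn$ case is required.
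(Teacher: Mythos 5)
Your proposal is correct and follows essentially the same route as the paper, whose entire proof is ``Immediate from Lemma~\ref{lemma:CWtopology}'': you re-run that lemma's argument with $\sconn$ in place of $\conn$, which is exactly the reduction the paper intends. The only difference is that you make explicit what the paper leaves implicit --- that the strong axiom {\bf P5intConn} (equivalently, $\sconn\ \Rightarrow\ \conn$ from Lemma~\ref{thm:sconn-implies-near}) is what converts interior overlap into the nonempty intersection needed for the weak-topology condition --- and you add a polytope argument for $\sk A\cap\sk B$ being a skeleton, a point the paper's lemma simply asserts.
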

\begin{proof}$\mbox{}$\\
Immediate from Lemma~\ref{lemma:CWtopology}.
\end{proof}


Next, we construct a Leader uniform topology on a collection of vortex cycles equipped with the descriptive connectedness proximity $\dsconn$.   

\begin{definition}
Let $X$ be a nonempty set.   For each given set $A\in 2^X$, form a cluster containing all subsets $B\in 2^X$ such that $A\cap B\neq \emptyset$.  The intersection as well as the union of clusters belong to $K$, defining a Leader uniform topology on $K$, namely, the collection of all uniform clusters on $K$.
\qquad \textcolor{blue}{\Squaresteel}
\end{definition}

\begin{theorem}\label{thm:vortexCyclesCWTopologySpatial}
Let $K$ be a finite collection of vortex cycles equipped the proximity $\sconn$ and let $\tau$ be a Leader uniform topology on the proximity space $\left(K,\sconn\right)$.   Then each cluster of vortex cycles $E\in \tau$ has a CW topology on $E$.
\end{theorem}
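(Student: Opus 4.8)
The plan is to reduce the statement to a cluster-wise application of Theorem~\ref{thm:vortexSconnCWtopology}, treating each $E\in\tau$ as a finite collection of skeletons equipped with the inherited relation $\sconn$. First I would recall, via Theorem~\ref{thm:vortexSconnCWtopology}, that the ambient pair $\left(K,\sconn\right)$ already carries a Whitehead CW topology; the real content is that this structure restricts to each cluster $E$, so that $E$ is itself a closure-finite weak complex.

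Next I would unpack the cluster. By the definition of the Leader uniform topology, $E$ is the collection of all vortex cycles in $K$ that overlap a fixed vortex cycle under $\sconn$, together with the unions and intersections of such clusters. Since $K$ is finite, $E$ is a finite subcollection of $K$. By definition a vortex cycle is a collection of nesting $1$-cycles, and each $1$-cycle is a skeleton; hence $E$ is a finite collection of skeletons. As a subspace of the Hausdorff space $K$, the cluster $E$ is Hausdorff, and it inherits the relation $\sconn$, so the pair $\left(E,\sconn\right)$ meets the hypotheses of Theorem~\ref{thm:vortexSconnCWtopology}. At this point the abstract reduction is complete, and it remains only to confirm the two defining CW properties inside $E$.

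The remaining work is to check closure finiteness and the weak topology condition on $E$. For closure finiteness, finiteness of $E$ forces the closure $\cl(\sk A)$ of each skeleton $\sk A\in E$ to intersect only finitely many other skeletons of $E$. For the weak topology condition, I would invoke the strong overlap axiom {\bf P5overlap}: whenever $\sk A\ \sconn\ \sk B$ in $E$, it follows that $\sk A\cap \sk B\neq\emptyset$, and this intersection is itself a skeleton---a shared vertex or a set of common edges---lying in $E$, so that $\sk A\cap \cl(\sk B)$ is closed and decomposes as $\bdy(\sk A\cap \cl(\sk B))\cup\Int(\sk A\cap \cl(\sk B))$. These are exactly the hypotheses used in the proof of Lemma~\ref{lemma:CWtopology}, now applied verbatim to $\left(E,\sconn\right)$, yielding a Whitehead CW topology on $E$.

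The main obstacle I anticipate lies in the weak topology verification rather than in closure finiteness: one must be certain that the overlap of two vortex cycles guaranteed by $\sconn$ is not merely nonempty but is realized as a genuine cell (skeleton) of the cluster, so that intersections of cells with closures of cells are again subcomplexes of $E$. Once axiom {\bf P5overlap} is used to identify each such overlap with a common skeleton of $E$, the closure-finite weak topology conditions follow precisely as in Lemma~\ref{lemma:CWtopology}, and the theorem is established.
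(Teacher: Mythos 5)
Your proposal is correct at the level of rigor the paper operates at, but it routes the argument differently from the paper's own proof. The paper verifies the two CW properties on the cluster $E$ directly: closure finiteness from the finiteness of $E$ (exactly as you do), and the weak topology property by invoking the \emph{weak} overlap axiom {\bf P4overlap} (P4intConn), i.e.\ $\Int(\vcyc A)\cap\Int(\vcyc B)\neq\emptyset\Rightarrow \cl(\vcyc A)\ \sconn\ \cl(\vcyc B)$, and then simply declares the CW structure established. You instead delegate to Theorem~\ref{thm:vortexSconnCWtopology} and Lemma~\ref{lemma:CWtopology} applied cluster-wise, and for the weak topology condition you use the \emph{strong} axiom {\bf P5overlap} in the opposite direction, $\sk A\ \sconn\ \sk B\Rightarrow \sk A\cap\sk B\neq\emptyset$, so as to exhibit the overlap as a concrete common skeleton (a shared vertex or common edges) and conclude that $\sk A\cap\cl(\sk B)$ is closed. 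Your direction is arguably the more faithful one for what the weak topology property actually demands --- knowing that interiors meet implies $\sconn$ does not by itself tell you that intersections of cells with closures of cells are closed, whereas knowing that $\sconn$ forces a nonempty intersection realized as a cell does --- and it mirrors the paper's own proof of Lemma~\ref{lemma:CWtopology} more closely than the paper's proof of this theorem does. Two small cautions: the theorem as stated does not assume $K$ is Hausdorff, while Theorem~\ref{thm:vortexSconnCWtopology} does, so your reduction quietly imports that hypothesis; and your claim that the intersection of two members of $E$ ``lies in $E$'' inherits the same unexamined closure-under-intersection assumption that the paper itself makes in Lemma~\ref{lemma:CWtopology}, so it is not a new gap, but it is worth being aware that neither argument fully justifies it.
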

\begin{proof}$\mbox{}$\\
Each $E\in \tau$ is a finite collection of vortex cycles equipped with the proximity $\dsconn$.   Each closure $\cl(\vcyc H)\in E$ intersects with a finite number of other vortex cycles in $E$, since $E$ is finite (closure finiteness property).    Let $\cl(\vcyc A), \cl(\vcyc B)\in E$.   For $\Int(\vcyc A)\ \cap\ \Int(\vcyc B)\neq \emptyset \Rightarrow \cl(\vcyc A)\sconn\ \cl(\vcyc B)$, from Axiom P4intConn (weak topology property).   Hence, $E$ has a CW topology.
\end{proof}

For descriptive proximity spaces, the construction of Leader uniform topologies is accomplished by considering the descriptive intersection $\dcap$ and descriptive union $\dcup$ of nonempty sets of vortex cycles.  Let $K$ be a nonempty collection of vortex cycles, $A,B \in K$.   Then descriptive union $\dcup$ is defined by
\[
A\ \dcup B = \left\{E\in K: \Phi(E)\in \Phi\left(A\cup B\right) \right\}\ \mbox{({\bf Descriptive union of sets of vortex cycles})}.
\]

\begin{lemma}\label{thm:LeaderTopology}
Let $K$ be a nonempty collection of vortex cycles on a finite cell complex $K$ equipped the proximity $\dsconn$.  From the pair $\left(K,\dsconn\right)$, a Leader uniform topology can be constructed. 
\end{lemma}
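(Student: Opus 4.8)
The plan is to mirror the construction of the spatial Leader uniform topology in Theorem~\ref{thm:vortexCyclesCWTopologySpatial}, replacing the ordinary intersection $\cap$ throughout with the descriptive intersection $\dcap$ and the ordinary union with the descriptive union $\dcup$. First I would invoke Theorem~\ref{thm:descriptiveOverlapVortexCyclesSpace}, which guarantees that $\left(K,\dsconn\right)$ is a proximity space, so that the closeness and remoteness relations needed to form Leader clusters are well defined.

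Next, for each vortex cycle $A\in K$ I would form the descriptive cluster consisting of all vortex cycles $B\in K$ with $A\dcap B\neq\emptyset$. By the strong descriptive connectedness axiom, $A\ \dsconn\ B$ forces $A\dcap B\neq\emptyset$, and by the weak descriptive connectedness axiom a nonempty descriptive overlap of interiors yields $A\ \dsconn\ B$; hence membership in this cluster coincides with the descriptive proximity relation $\dsconn$, so that each cluster is exactly the descriptive neighbourhood of its generating vortex cycle. The candidate topology $\tau$ is then the collection of all such descriptive uniform clusters on $K$.

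Following the Leader definition verbatim, the core step is to verify that $\tau$ is closed under both $\dcap$ and $\dcup$. For the union, I would use the defining formula $A\ \dcup B = \left\{E\in K: \Phi(E)\in \Phi\left(A\cup B\right)\right\}$: any $E$ in the descriptive union of two clusters carries a feature vector matching a member of $A\cup B$, so $E$ again lies in $K$ and the union is itself a descriptive cluster. For the intersection, the definition $A\dcap B = \{x\in A\cup B: \Phi(x)\in\Phi(A),\ \Phi(x)\in\Phi(B)\}$ realizes the descriptive intersection of two clusters concretely as a subcollection of $K$ whose elements share descriptions with both clusters, and axiom P$_\Phi$3dConn lets the relation distribute over the unions appearing here, so the intersection is again a cluster. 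Closure under both operations is precisely the requirement in the Leader definition, which then yields the Leader uniform topology.

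The main obstacle I expect is this closure-under-operations step. In the spatial case of Theorem~\ref{thm:vortexCyclesCWTopologySpatial} the intersection $A\cap B$ is literally a sub-skeleton embedded in $K$, whereas here $A\dcap B$ is defined feature-theoretically through $\Phi$, and one must check that this descriptively defined object is still a genuine vortex cycle (cluster) sitting inside $K$, rather than merely an abstract set of matching feature values. I would resolve this by exploiting that $\dcap$ is explicitly a subset of $A\cup B$ carrying the common description, so every element it produces is already a member of $K$; the same observation handles $\dcup$ through the condition $\Phi(E)\in\Phi(A\cup B)$. With closure under $\dcap$ and $\dcup$ established, the collection of all descriptive uniform clusters is a Leader uniform topology on $\left(K,\dsconn\right)$, as required.
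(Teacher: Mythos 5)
Your proposal is correct and follows essentially the same route as the paper's own proof: form a cluster for each vortex cycle via nonempty descriptive intersection (identified with the relation $\dsconn$ through the descriptive connectedness axioms), then verify the intersection and union properties using the defining formulas for $\dcap$ and $\dcup$. Your added care about why $A\dcap B$ lands back inside $K$ is a reasonable elaboration of a step the paper treats as immediate, but it does not change the argument.
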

\begin{proof}$\mbox{}$\\
We have $\Phi(K) = \left\{\Phi(\vcyc A): \vcyc A\in K\right\}$, the feature space for $K$.
Let $\vcyc A\ \dcap\ \vcyc B\neq \emptyset$ be descriptive intersection of a pair of vortex cycles  $\vcyc A, \vcyc B$ in $K$.   
From Axiom $P_{\Phi}4\mbox{conn}$, $\vcyc A\ \dsconn\ \vcyc B$.   
For each given $\vcyc A$, find all vortex cycles $\vcyc B\in K$ with nonempty intersection $\vcyc A\ \dcap\ \vcyc B\in \Phi(K)$ (intersection property), {\em i.e.}, all vortex cycles $\vcyc B$ such that $\vcyc A\ \dsconn\ \vcyc B$.  
 Let $A\ \dcup B = G$ be a descriptive union of sets of vortex cycles $A,B \in K$.   By definition, $\Phi(G)\in \Phi\left(A\cup B\right)$ (union property).   This gives the desired result.  
\end{proof}

\begin{theorem}\label{thm:vortexNervesLeaderTopology}
Let $K$ be a nonempty finite collection of vortex nerves equipped the proximity $\sconn$.  The proximity space $\left(K,\sconn\right)$ constructs a Leader uniform topology. 
\end{theorem}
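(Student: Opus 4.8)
The plan is to mirror the construction already carried out for the descriptive proximity in Lemma~\ref{thm:LeaderTopology}, now replacing the descriptive intersection $\dcap$ and the descriptive overlap proximity $\dsconn$ by ordinary set intersection $\cap$ and the spatial overlap proximity $\sconn$. First I would invoke Theorem~\ref{thm:overlapSkeletonSpace} to record that $\left(K,\sconn\right)$ is already a proximity space, since every vortex nerve in $K$ is a collection of skeletons. Because $K$ is finite, each cluster formed below is finite, which is what makes the uniform clusters well-behaved.

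Next I would build the clusters required by the definition of a Leader uniform topology. For each vortex nerve $\vNrv A\in K$, form the cluster consisting of all vortex nerves $\vNrv B\in K$ with $\vNrv A\cap \vNrv B\neq \emptyset$. The bridge between this set-theoretic condition and the proximity $\sconn$ is supplied by the overlap connectedness axioms: Axiom P5intConn gives $\vNrv A\ \sconn\ \vNrv B\Rightarrow \vNrv A\cap \vNrv B\neq \emptyset$, so every $\sconn$-overlapping nerve lies in the cluster, while Axiom P4intConn gives $\Int\vNrv A\ \cap\ \Int\vNrv B\neq \emptyset\Rightarrow \vNrv A\ \sconn\ \vNrv B$, so nerves whose interiors meet are collected as well. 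This establishes the \emph{intersection property} demanded by the definition: for each given $\vNrv A$ the cluster gathers exactly the vortex nerves $\vNrv B$ for which $\vNrv A\ \sconn\ \vNrv B$.

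Then I would check the \emph{union property}. Since each vortex nerve is itself a collection of intersecting 1-cycles, hence of skeletons, the union $\vNrv A\cup \vNrv B$ of two nerves is again a collection of skeletons, i.e.\ a member of $K$; likewise the intersection of two clusters is again a collection of nerves overlapping both, so intersections and unions of clusters remain in $K$. Having verified both the intersection property and the union property, the collection of all uniform clusters on $K$ is, by the definition of a Leader uniform topology, exactly such a topology, and the result follows.

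The step I expect to be the main obstacle is the bookkeeping that keeps the $\sconn$-clusters closed under intersection and union while respecting the asymmetry between the weak axiom P4intConn (phrased with interiors) and the strong axiom P5intConn (phrased with the nerves themselves). One must be careful that the cluster of $\vNrv A$ is defined through $\vNrv A\cap \vNrv B\neq \emptyset$ rather than through interior overlap alone, so that the family is genuinely closed under the Leader operations; Axiom P5intConn is precisely what guarantees that $\sconn$ never produces an overlap weaker than nonempty intersection, and this inclusion is what makes the argument go through.
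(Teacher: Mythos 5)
Your proposal is correct at the level of rigor this paper operates at, and it reaches the same destination, but it is worth noting that the paper's own proof of Theorem~\ref{thm:vortexNervesLeaderTopology} consists of the single line ``Immediate from Lemma~\ref{thm:LeaderTopology}'' --- and that lemma is stated for vortex \emph{cycles} under the \emph{descriptive} proximity $\dsconn$, with clusters built from the descriptive intersection $\dcap$. So the paper implicitly asks the reader to perform exactly the translation you carry out explicitly: replace $\dcap$ by $\cap$, replace $\dsconn$ by $\sconn$, and use Axioms P4intConn and P5intConn in place of $P_{\Phi}4\mbox{conn}$ to tie the cluster condition $\vNrv A\cap \vNrv B\neq \emptyset$ to the proximity. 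Your version is therefore more self-contained and arguably fills a genuine gap, since the cited lemma does not literally cover the spatial case; your observation that it is P5intConn (rather than the interior-based P4intConn) which guarantees that $\sconn$-closeness implies membership in the intersection-defined cluster is precisely the point the paper glosses over. The one spot where you are as loose as the paper is the union/intersection property: the Leader definition demands closure of the family of \emph{clusters} under union and intersection, and your remark that the union $\vNrv A\cup \vNrv B$ of two nerves is again a member of $K$ conflates unions of individual nerves with unions of clusters; this should be restated in terms of clusters, but it is no worse than the corresponding step in the paper's own proof of Lemma~\ref{thm:LeaderTopology}, and finiteness of $K$ keeps everything well defined.
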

\begin{proof}$\mbox{}$\\
Immediate from Lemma~\ref{thm:LeaderTopology}.
\end{proof}

From what we have observed so far, a form of problem reduction results from the construction of CW topology on a cluster in a Leader uniform topology.

\begin{theorem}\label{thm:clusterCWtopology}
Let $\mathcal{C}$ be a Leader uniform topology cluster in a collection of skeletons $K$  equipped the proximity $\conn$.  The proximity space $\left(\mathcal{C},\conn\right)$ constructs a CW topology. 
\end{theorem}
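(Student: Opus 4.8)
The plan is to reduce the claim directly to Lemma~\ref{lemma:CWtopology}, by observing that a Leader uniform topology cluster is, structurally, nothing more than a finite collection of skeletons carrying the inherited connectedness proximity. The strategy parallels the earlier CW-topology results (e.g.\ Theorem~\ref{thm:vortexSconnCWtopology}), which also defer to Lemma~\ref{lemma:CWtopology} once the hypotheses are shown to hold.

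First I would unpack the hypothesis. By the definition of the Leader uniform topology, the cluster $\Cn$ is obtained by fixing a base skeleton $A_0\in K$ and collecting every skeleton $B$ with $A_0\cap B\neq\emptyset$; by Axiom P4conn this is exactly every $B$ with $A_0\ \conn\ B$. Since $K$ is a finite cell complex, $\Cn$ is a nonempty, finite subcollection of finite skeletons, and it inherits both the Hausdorff separation of $K$ and the relation $\conn$ restricted to its members. By Lemma~\ref{lemma:skeletonProximitySpace}, $\left(\Cn,\conn\right)$ is then a connectedness proximity space, so the standing hypotheses of Lemma~\ref{lemma:CWtopology} are met on $\Cn$.

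Next I would verify the two defining CW properties on $\Cn$ exactly as in Lemma~\ref{lemma:CWtopology}. Closure finiteness is immediate: for any $\sk A\in\Cn$, the closure $\cl(\sk A)$ is finite and, because $\Cn$ is finite, meets only finitely many other skeletons of $\Cn$. For the weak topology property, suppose $\sk A\ \conn\ \sk B$ with $\sk A,\sk B\in\Cn$. Then Lemma~\ref{lemma:ConnectnessImpliesOverlap} gives $\sk A\cap\sk B\neq\emptyset$, and this overlap is again a skeleton---a common vertex or a common edge set. Using the closure of a Leader cluster under intersection, together with the fact that any common cell of $\sk A$ and $\sk B$ remains connected to the base $A_0$ through the paths guaranteed by $\conn$, the overlap cell lies in $\Cn$; hence $\sk A\cap\sk B\in\Cn$ whenever $\sk A\ \conn\ \sk B$.

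The step I expect to be the main obstacle is precisely this last closure check: confirming that the intersection of two connected skeletons of $\Cn$ is a skeleton lying in $\Cn$ rather than merely in the ambient $K$. Once that is established, the closure-finiteness and weak-topology properties hold on $\Cn$, and an application of Lemma~\ref{lemma:CWtopology} to $\left(\Cn,\conn\right)$ yields the desired Whitehead CW topology. This completes the reduction.
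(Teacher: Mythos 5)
Your proposal is correct and follows essentially the same route as the paper, whose entire proof is ``Immediate from Lemma~\ref{lemma:CWtopology}'': you reduce the claim to that lemma by noting that a Leader cluster $\mathcal{C}$ is itself a finite collection of skeletons inheriting $\conn$, and then re-verify closure finiteness and the weak topology property on $\mathcal{C}$. The only difference is that you make explicit (and correctly flag as the delicate point) the check that $\sk A\cap\sk B$ lands in $\mathcal{C}$ rather than merely in $K$, a detail the paper silently elides.
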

\begin{proof}$\mbox{}$\\
Immediate from Lemma~\ref{lemma:CWtopology}.
\end{proof}

\begin{corollary}
Let $\mathcal{C}$ be a Leader uniform topology cluster in a collection of skeletons $K$  equipped the proximity $\sconn$.  The proximity space $\left(\mathcal{C},\sconn\right)$ constructs a CW topology. 
\end{corollary}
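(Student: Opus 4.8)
The plan is to reduce the statement to the already-established connectedness version, Theorem~\ref{thm:clusterCWtopology}, by exploiting the fact that the strong overlap proximity $\sconn$ refines the connectedness proximity $\conn$. First I would observe that a Leader uniform topology cluster $\mathcal{C}$ is, by construction, a finite subcollection of the skeletons of $K$; hence $\mathcal{C}$ is itself a finite collection of skeletons sitting inside the Hausdorff cell complex $K$, and by Theorem~\ref{thm:overlapSkeletonSpace} the pair $\left(\mathcal{C},\sconn\right)$ is a proximity space. This puts me in position to verify the two defining properties of a Whitehead CW topology directly for $\left(\mathcal{C},\sconn\right)$.

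Next I would check closure finiteness. Because $\mathcal{C}$ is finite, for any skeleton $\sk A\in \mathcal{C}$ the closure $\cl(\sk A)$ can intersect only finitely many other skeletons of $\mathcal{C}$; this property is purely a consequence of finiteness and does not depend on which proximity equips the space, so it transfers verbatim from the argument used in Lemma~\ref{lemma:CWtopology}.

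Then I would establish the weak topology property, where the key is the strong overlap axiom P5intConn: for skeletons $\sk A,\sk B\in \mathcal{C}$, $\sk A\ \sconn\ \sk B$ implies $\sk A\cap \sk B\neq \emptyset$, and by Lemma~\ref{thm:sconn-implies-near} we also have $\sk A\ \conn\ \sk B$. Thus every $\sconn$-close pair is a $\conn$-close pair whose intersection $\sk A\cap \sk B$ is a nonempty finite skeleton, hence a cell of $\mathcal{C}$. The closedness condition $\sk A\cap \cl(\sk B)=\bdy\!\left(\sk A\cap \cl(\sk B)\right)\cup \Int\!\left(\sk A\cap \cl(\sk B)\right)$ then follows exactly as in Lemma~\ref{lemma:CWtopology}, and combining it with closure finiteness yields the CW topology on $\left(\mathcal{C},\sconn\right)$.

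I expect the only real subtlety, and thus the main obstacle, to be confirming that passing from $\conn$ to the strictly stronger $\sconn$ does not weaken the weak-topology property: one must be sure that the intersections required to lie in the complex are exactly those arising from $\sconn$-close pairs, and that axiom P5intConn (rather than the weaker P4intConn) supplies the genuine set-theoretic intersection needed. Once this is in hand, the corollary is immediate from Theorem~\ref{thm:clusterCWtopology} together with Lemma~\ref{thm:sconn-implies-near}.
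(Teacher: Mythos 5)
Your proposal is correct and follows essentially the same route as the paper, which simply declares the corollary immediate from Theorem~\ref{thm:clusterCWtopology}; your reduction via axiom P5intConn and Lemma~\ref{thm:sconn-implies-near} (every $\sconn$-close pair is $\conn$-close with nonempty intersection) just makes explicit the justification the paper leaves implicit. No gap.
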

\begin{proof}$\mbox{}$\\
Immediate from Theorem~\ref{thm:clusterCWtopology}.
\end{proof}

\begin{corollary}
Let $\mathcal{C}$ be a Leader uniform topology cluster in a collection of vortex cycles $K$  equipped the proximity $\sconn$.  The proximity space $\left(\mathcal{C},\sconn\right)$ constructs a CW topology. 
\end{corollary}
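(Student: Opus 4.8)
The plan is to follow the same reduction-to-skeletons strategy that underlies every preceding CW-topology result in this section, specializing the immediately preceding corollary (the skeleton version for $\sconn$) to the vortex-cycle setting, so that the conclusion is inherited rather than re-derived. The single structural observation driving everything is that a vortex cycle is built from skeletons.

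First I would invoke Theorem~\ref{thm:vortexCycle}: every finite planar vortex cycle $\vcyc A$ is a collection of non-concentric, nesting 1-cycles, and each such 1-cycle is itself a skeleton. Consequently the Leader uniform cluster $\mathcal{C}$, being drawn from a collection of vortex cycles $K$, is at bottom a collection of skeletons. This is the move that lets the preceding skeleton results apply verbatim.

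Second I would record the relationship between the two proximities. By part~(1) of Lemma~\ref{thm:sconn-implies-near}, $A\ \sconn\ B$ implies $A\ \conn\ B$, so the overlap-connectedness structure on $\mathcal{C}$ refines the ordinary connectedness structure; the interior-overlap hypothesis of Axiom P4intConn is strictly stronger than the vertex-sharing hypothesis of Axiom P4conn. Hence any closure-finiteness or weak-topology conclusion already secured for $\conn$ on a skeleton cluster transports to $\sconn$. With $\mathcal{C}$ recognized as a skeleton cluster, I would then apply the preceding corollary (skeleton clusters equipped with $\sconn$), which descends from Theorem~\ref{thm:clusterCWtopology} and ultimately from Lemma~\ref{lemma:CWtopology}. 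Concretely, each closure $\cl(\vcyc H)\in\mathcal{C}$ meets only finitely many other vortex cycles because $\mathcal{C}$ is finite, giving closure finiteness, while $\Int(\vcyc A)\cap\Int(\vcyc B)\neq\emptyset \Rightarrow \vcyc A\ \sconn\ \vcyc B$ supplies the weak-topology condition exactly as in Theorem~\ref{thm:vortexCyclesCWTopologySpatial}. Therefore $\left(\mathcal{C},\sconn\right)$ carries a Whitehead CW topology.

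The only genuine subtlety --- the ``hard part,'' such as it is --- is bookkeeping rather than new mathematics: one must confirm that a Leader uniform cluster of vortex cycles is closed under the finite intersections invoked by the weak-topology axiom, so that the intersection $\vcyc A\cap\vcyc B$ of two overlapping cluster members is again a skeleton lying in $\mathcal{C}$. Since a vortex cycle is a finite union of skeletons and an intersection of skeletons is a skeleton, this closure holds and no additional estimate is required, so the proof is immediate from the preceding corollary.
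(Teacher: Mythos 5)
Your proposal is correct and follows essentially the same route as the paper: the paper's proof is simply ``Immediate from Theorem~\ref{thm:clusterCWtopology},'' i.e.\ the reduction of a cluster of vortex cycles to a cluster of skeletons so that the skeleton-cluster CW result applies, which is exactly the reduction you spell out. Your additional remarks on $\sconn$ refining $\conn$ and on closure under intersection are elaborations of steps the paper leaves implicit, not a different argument.
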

\begin{proof}$\mbox{}$\\
Immediate from Theorem~\ref{thm:clusterCWtopology}.
\end{proof}

\begin{corollary}
Let $\mathcal{C}$ be a Leader uniform topology cluster in a collection of vortex nerves $K$  equipped the proximity $\sconn$.  The proximity space $\left(\mathcal{C},\sconn\right)$ constructs a CW topology. 
\end{corollary}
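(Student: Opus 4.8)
The plan is to follow the same reduction that underlies the three preceding corollaries: resolve the vortex nerves into skeletons and then invoke the cluster-level CW topology result for skeletons. First I would note that, by definition, each vortex nerve $\vNrv A \in K$ is a collection of intersecting 1-cycles, and each such 1-cycle is a skeleton. Consequently $K$ is itself a collection of skeletons, and the Leader uniform topology cluster $\mathcal{C}$ is a cluster inside a collection of skeletons. This is exactly the hypothesis needed to apply the skeleton-level machinery developed earlier.

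Next I would apply Theorem~\ref{thm:clusterCWtopology}, which states precisely that a Leader uniform topology cluster in a collection of skeletons equipped with $\conn$ constructs a CW topology. To make this applicable under the overlap proximity $\sconn$ rather than $\conn$, I would appeal to part $(1)$ of Lemma~\ref{thm:sconn-implies-near}, namely that $A\ \sconn\ B$ implies $A\ \conn\ B$. Since overlap connectedness is stronger than connectedness, any cluster formed under $\sconn$ is in particular a cluster under $\conn$, so the closure finiteness and weak topology properties established for the $\conn$ case transfer directly to $\mathcal{C}$.

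The only step requiring genuine attention is the verification of the weak topology property in the overlap setting, and I expect it to be the main (if modest) obstacle. Here I would argue as in the proof of Theorem~\ref{thm:vortexCyclesCWTopologySpatial}: for closures $\cl(\vNrv A),\ \cl(\vNrv B) \in \mathcal{C}$, the relation $\Int(\vNrv A)\ \cap\ \Int(\vNrv B)\neq\emptyset$ yields $\cl(\vNrv A)\ \sconn\ \cl(\vNrv B)$ by Axiom P4intConn, and the intersection of these closed cells is again a closed cell in $\mathcal{C}$. Closure finiteness is immediate because $\mathcal{C}$, being a cluster in a finite cell complex, meets only finitely many other nerves. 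Hence $\left(\mathcal{C},\sconn\right)$ carries a CW topology, completing the argument.
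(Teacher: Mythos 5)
Your proposal is correct and follows essentially the same route as the paper: the paper's proof is simply ``Immediate from Theorem~\ref{thm:clusterCWtopology},'' i.e.\ the same reduction of vortex nerves to skeletons and appeal to the cluster-level CW result under $\conn$. You merely make explicit the steps the paper leaves implicit (the decomposition of nerves into 1-cycle skeletons, the passage from $\sconn$ to $\conn$ via Lemma~\ref{thm:sconn-implies-near}(1), and the verification of closure finiteness and the weak topology property), which is a faithful elaboration rather than a different argument.
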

\begin{proof}$\mbox{}$\\
Immediate from Theorem~\ref{thm:clusterCWtopology}.
\end{proof}

\subsection{Homotopic Types of Vortex Cycles and Vortex Nerves}


\begin{theorem}\label{EHnerve}{\rm ~\cite[\S III.2, p. 59]{Edelsbrunner1999}}
Let $\mathscr{F}$ be a finite collection of closed, convex sets in Euclidean space.  Then the nerve of $\mathscr{F}$ and the union of the sets in $\mathscr{F}$ have the same homotopy type.
\end{theorem}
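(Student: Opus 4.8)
The plan is to reduce the statement to the classical Nerve Lemma for good covers, the decisive point being that convexity forces every nonempty intersection to be contractible. Write $\mathscr{F} = \{F_1,\dots,F_m\}$ and $U = \bigcup_{i=1}^m F_i$. First I would observe that for any index set $\emptyset \neq I \subseteq \{1,\dots,m\}$ with $\bigcap_{i\in I} F_i \neq \emptyset$, the intersection $\bigcap_{i\in I} F_i$ is again convex, since an intersection of convex sets is convex; and a nonempty convex subset of Euclidean space is contractible, deformation retracting to any of its points along straight-line segments. Thus $\mathscr{F}$ is a \emph{good cover} of $U$: every nonempty finite intersection of its members is contractible, and the combinatorial record of which subcollections intersect is precisely $\Nrv \mathscr{F}$.

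Since the Nerve Lemma is usually stated for open covers, the next step handles the fact that the $F_i$ are closed. Because $\mathscr{F}$ is finite, I would thicken each $F_i$ to an open convex set $F_i^{\varepsilon}$ (an $\varepsilon$-neighbourhood, which remains convex) chosen so small that a subcollection has nonempty intersection if and only if the corresponding thickened subcollection does. This preserves both the nerve and the homotopy type of the union, so without loss of generality the cover may be taken to be open and convex.

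Then I would establish the equivalence $U \simeq |\Nrv \mathscr{F}|$ through the Mayer--Vietoris blow-up (nerve) complex. Form
\[
B \;=\; \bigsqcup_{\emptyset \neq I} \Bigl(\bigcap_{i\in I} F_i\Bigr)\times \Delta^{I} \Big/ \!\sim,
\]
where $\Delta^{I}$ is the geometric simplex on vertex set $I$ and the identifications are the evident face-inclusions. There are two canonical projections: $p\colon B \to U$, collapsing the simplex factors, and $q\colon B \to |\Nrv \mathscr{F}|$, collapsing the space factors. Over a point $x \in U$ the preimage $p^{-1}(x)$ is the simplex spanned by $\{\,i : x \in F_i\,\}$, hence contractible; over each open simplex of the nerve the map $q$ restricts to a projection whose fibre is a nonempty convex intersection, again contractible.

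The main obstacle, and the real topological content, is to promote ``all fibres are contractible'' into the conclusion that $p$ and $q$ are genuine homotopy equivalences. For this I would invoke the gluing lemma for homotopy equivalences: a map between spaces assembled from a finite cover that restricts to a homotopy equivalence over each piece and over all its overlaps is itself a homotopy equivalence. Equivalently, one can run an induction on $m$, applying Mayer--Vietoris together with the five lemma and the observation that the relevant overlaps are themselves good covers. Applying this to both $p$ and $q$ yields $U \simeq B \simeq |\Nrv \mathscr{F}|$, and composing the two equivalences shows that the union of the sets in $\mathscr{F}$ and the nerve $\Nrv \mathscr{F}$ have the same homotopy type, as claimed.
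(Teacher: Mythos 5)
The paper does not actually prove this statement: Theorem~\ref{EHnerve} is imported verbatim from Edelsbrunner and Harer, with the citation standing in for a proof. Your proposal therefore supplies more than the paper does, and its architecture is the standard one — essentially what the cited source itself sketches: convexity makes every nonempty intersection contractible, so $\mathscr{F}$ is a good cover; the blow-up complex $B$ projects to the union and to $|\Nrv\,\mathscr{F}|$ with contractible fibres; and the gluing lemma for homotopy equivalences (equivalently, induction with Mayer--Vietoris and the five lemma) upgrades fibrewise contractibility to two genuine homotopy equivalences. As an outline this is correct and identifies the right key lemma.

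The one step to push back on is the reduction from closed to open sets. Your claim that a sufficiently small $\varepsilon$-thickening preserves which subcollections intersect is true when the $F_i$ are compact (for a subfamily with empty intersection, $x\mapsto\max_i d(x,F_i)$ is everywhere positive and proper, hence bounded below by a positive constant), but it fails for unbounded closed convex sets: two disjoint closed convex sets can be at distance zero (a closed half-plane and the closed epigraph of $e^{-x}$, say), so every thickening creates a new intersection and changes the nerve. You also assert without argument that thickening preserves the homotopy type of the union, which is itself a small deformation-retraction claim needing proof. Either add a compactness hypothesis (harmless for this paper's applications, where the sets are filled planar polytopes), or bypass the reduction entirely by citing a version of the nerve theorem valid for closed covers of CW or ANR type, under which your blow-up argument goes through unchanged.
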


\begin{lemma}\label{thm:vortexHomotopy}
Let $\cyc A$ be a vortex cycle in a finite collection of closed, convex skeletons in a cell complex $K$.
Then vortex cycle $\cyc A$ and the union of the skeletons in $\cyc A$ have the same homotopy type.
\end{lemma}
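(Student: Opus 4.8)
The plan is to deduce the statement directly from the Edelsbrunner--Harer nerve theorem (Theorem~\ref{EHnerve}), treating that result as a black box. The guiding idea is that a vortex cycle is, by construction, nothing but a finite family of overlapping closed convex 1-cycles, and the overlapping 1-cycles of a vortex already constitute an Edelsbrunner--Harer nerve; hence the homotopy comparison asserted here is exactly the comparison the nerve theorem supplies.

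First I would fix $\mathscr{F}$ to be the finite collection of closed, convex skeletons (the filled 1-cycles) that make up $\cyc A$. Because $\cyc A$ is a vortex cycle, its 1-cycles are non-concentric and nesting with a common nonempty interior; hence the members of $\mathscr{F}$ share interior points, every nonempty subfamily of $\mathscr{F}$ has nonempty intersection, and $\cyc A$ coincides with the nerve $\Nrv \mathscr{F}$ in the sense defined in the introduction. Next I would verify the hypotheses of Theorem~\ref{EHnerve}: the family $\mathscr{F}$ is finite by assumption, each skeleton is closed and convex by assumption (each $n$-sided 1-cycle being an intersection of $n$ closed half planes), and the intersection of any subfamily of convex sets is again convex, hence contractible whenever nonempty. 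With these in hand, Theorem~\ref{EHnerve} applies verbatim and yields that $\Nrv \mathscr{F}$ and $\bigcup \mathscr{F}$ have the same homotopy type.

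To finish I would unwind the two identifications, namely that $\cyc A$ equals the nerve $\Nrv \mathscr{F}$ and that $\bigcup \mathscr{F}$ is the union of the skeletons in $\cyc A$, so that the homotopy equivalence delivered by the nerve theorem becomes precisely the claim that $\cyc A$ and the union of its skeletons share a homotopy type. The hard part will be the identification in the second paragraph: making rigorous the assertion that the vortex cycle, regarded as a geometric cell complex, is the same object as the abstract nerve of its constituent closed convex skeletons. Once that identification is granted, the result is an immediate instance of the cited theorem, and the analogous statements for vortex nerves will follow by the same argument, since a vortex nerve is itself a vortex cycle whose 1-cycles overlap.
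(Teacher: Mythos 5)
Your proposal takes essentially the same route as the paper: the paper's proof is a one-line appeal to Theorem~\ref{EHnerve}, exactly the black-box application you describe. Your additional care in checking the hypotheses (finiteness, closedness, convexity of the skeletons, and the identification of $\cyc A$ with the nerve of its constituent skeletons) is more explicit than what the paper provides, and your candid flag that the identification of the geometric vortex cycle with the abstract nerve is the delicate step is a gap the paper itself leaves unaddressed.
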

\begin{proof} 
From Theorem~\ref{EHnerve}, we have that the union of the skeletons $\sk E\in \cyc A$ and $\cyc A$ have the same homotopy type.
\end{proof}

\begin{theorem}\label{thm:vortexCyclesHomotopyType}
Let $K$ be a finite collection of vortex cycles equipped the proximity $\sconn$ and let $\tau$ be a Leader uniform topology on the proximity space $\left(K,\sconn\right)$.   Then each cluster of closed, convex vortex cycles $\mathcal{C}\in \tau$ and the union of vortex cycles in $\mathcal{C}$ have the same homotopy type.
\end{theorem}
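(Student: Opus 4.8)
The plan is to recognize that a cluster $\mathcal{C}$ in the Leader uniform topology is, up to the natural identification, the Edelsbrunner--Harer nerve of its member vortex cycles, and then to invoke Theorem~\ref{EHnerve} directly. First I would fix a cluster $\mathcal{C}\in\tau$. Since $K$ is finite, $\mathcal{C}$ is a finite collection of vortex cycles, and by hypothesis each $\vcyc A\in\mathcal{C}$ is closed and convex. This is consistent with the earlier observation that an $n$-sided 1-cycle is the intersection of $n$ closed half-planes, so the filled region bounded by the outer 1-cycle of each $\vcyc A$ is a closed, convex subset of $\mathbb{R}^2$. Thus $\mathcal{C}$ is exactly a finite family of closed convex planar sets, the setting in which Theorem~\ref{EHnerve} applies.

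Second, I would make precise the topological meaning of ``the cluster $\mathcal{C}$.'' By the definition of the Leader uniform topology on $\left(K,\sconn\right)$, membership in $\mathcal{C}$ is determined by overlap; the strong overlap axiom {\bf P5overlap} turns each relation $\vcyc A\ \sconn\ \vcyc B$ into an honest set intersection $\vcyc A\cap\vcyc B\neq\emptyset$. Collecting the nonempty intersections of all orders among the members of $\mathcal{C}$ yields the Edelsbrunner--Harer nerve $\Nrv\mathcal{C}=\left\{X\subseteq\mathcal{C}:\bigcap X\neq\emptyset\right\}$. Because the overlapping vortex cycles in a cluster constitute a vortex nerve, the cluster $\mathcal{C}$, regarded as a combinatorial--topological object, is precisely (the geometric realization of) $\Nrv\mathcal{C}$.

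With these two steps in place the conclusion is immediate: applying Theorem~\ref{EHnerve} to the finite family of closed convex sets $\mathcal{C}$ gives that $\Nrv\mathcal{C}$ and the union $\bigcup_{\vcyc A\in\mathcal{C}}\vcyc A$ share the same homotopy type, which is exactly the assertion that $\mathcal{C}$ and the union of the vortex cycles in $\mathcal{C}$ are homotopy equivalent. As a supporting check one may invoke Lemma~\ref{thm:vortexHomotopy}: each $\vcyc A$ is homotopy equivalent to the union of its own skeletons, so replacing the 1-cycles by their filled closed convex regions changes nothing up to homotopy.

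The step I expect to be the main obstacle is the second one, namely pinning down the identification of the Leader cluster $\mathcal{C}$ with the nerve $\Nrv\mathcal{C}$. One must argue that the overlap relation defining cluster membership, together with the higher intersections it induces, reproduces the full simplicial structure of $\Nrv\mathcal{C}$, rather than recording only that each member meets a common base cycle; the closed, convex hypothesis is what keeps these intersections convex, hence contractible, so that the nerve genuinely captures the homotopy type of the union and Theorem~\ref{EHnerve} may be applied without further hypotheses on the cover.
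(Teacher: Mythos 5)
Your proposal is correct and follows essentially the same route as the paper: the paper also reduces the claim to the Edelsbrunner--Harer Nerve Theorem (Theorem~\ref{EHnerve}) applied to the cluster viewed as a finite family of closed, convex sets, merely routing the citation through Lemma~\ref{thm:vortexHomotopy} rather than invoking the nerve theorem directly. The identification of the Leader cluster with $\Nrv\mathcal{C}$ that you flag as the delicate step is likewise taken for granted in the paper's own proof, so your version is, if anything, slightly more explicit about where the real content lies.
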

\begin{proof} 
Each vortex cycle $\vcyc A$ in $\mathcal{C}$ is constructed from a collection of closed, convex skeletons in the cell complex $K$.   Consequently, $\mathcal{C}$ is a collection of closed, convex vortex cycles.   Hence, 
from Lemma~\ref{thm:vortexHomotopy}, we have that the union of the vortex cycles $\cyc A \in \mathcal{C}$ and $\mathcal{C}$ have the same homotopy type.
\end{proof}

\begin{corollary}\label{thm:vortexNervesHomotopyType}
Let $K$ be a finite collection of vortex nerves equipped the proximity $\sconn$ and let $\tau$ be a Leader uniform topology on the proximity space $\left(K,\sconn\right)$.   Then each cluster of closed, convex vortex nerves $\mathcal{N}\in \tau$ and the union of vortex nerves in $\mathcal{N}$ have the same homotopy type.
\end{corollary}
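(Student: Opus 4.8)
The plan is to recognize the final statement as an immediate specialization of Theorem~\ref{thm:vortexCyclesHomotopyType}, using the fact that every vortex nerve is itself a vortex cycle. First I would recall the definition given earlier in the paper: a vortex nerve $\vNrv A$ is precisely a vortex cycle whose constituent 1-cycles overlap, sharing at least one common vertex and thereby forming an Edelsbrunner-Harer nerve $\Nrv A$ inside the vortex. Since the overlap of the 1-cycles is merely an extra condition imposed on an ordinary vortex cycle, the class of vortex nerves is a subclass of the class of vortex cycles. Hence a finite collection $K$ of vortex nerves equipped with $\sconn$ is, in particular, a finite collection of vortex cycles equipped with $\sconn$, and the Leader uniform topology $\tau$ on $\left(K,\sconn\right)$ is a Leader uniform topology on this underlying collection of vortex cycles.

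Next I would carry out the reduction. Fix a cluster $\mathcal{N}\in\tau$ of closed, convex vortex nerves. By the previous paragraph each $\vNrv A\in\mathcal{N}$ is a closed, convex vortex cycle, so $\mathcal{N}$ is at the same time a cluster $\mathcal{C}\in\tau$ of closed, convex vortex cycles in the sense required by Theorem~\ref{thm:vortexCyclesHomotopyType}. Applying that theorem verbatim with $\mathcal{C}=\mathcal{N}$ yields that $\mathcal{N}$ and the union of the vortex cycles in $\mathcal{N}$ --- which are exactly the vortex nerves in $\mathcal{N}$ --- have the same homotopy type, which is the desired conclusion.

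The genuine mathematical content lies not in the reduction but in the homotopy-type assertion it invokes, which ultimately rests on the Edelsbrunner-Harer nerve theorem (Theorem~\ref{EHnerve}) as applied in Lemma~\ref{thm:vortexHomotopy}. The point that requires care --- and the only place where the nerve structure could in principle interfere --- is checking that the closed, convex hypothesis is genuinely available at the level of nerves: each vortex nerve is a union of finitely many closed, convex filled 1-cycles sharing a nonempty common interior, so the collection $\mathcal{N}$ inherits exactly the closed, convex structure on which Theorem~\ref{EHnerve} depends, and the overlapping of the 1-cycles within a single nerve does not affect the cluster-level union. I expect this inheritance of the convexity hypothesis to be the main (and only mild) obstacle; once it is noted, the result follows immediately from Theorem~\ref{thm:vortexCyclesHomotopyType}.
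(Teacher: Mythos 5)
Your proposal is correct and follows essentially the same route as the paper: the paper's own proof is a one-line reduction to Theorem~\ref{thm:vortexCyclesHomotopyType}, justified by observing that a vortex nerve is a collection of intersecting closed convex vortex cycles, which is exactly the specialization you carry out (you additionally spell out the inheritance of the closed, convex hypothesis, which the paper leaves implicit).
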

\begin{proof}$\mbox{}$\\
Immediate from Theorem~\ref{thm:vortexCyclesHomotopyType}, since vortex nerve is a collection of intersecting closed convex vortex cycles in $K$.
\end{proof}

\subsection{Open Problems}.
This section identifies open problems emerging from the study of proximal vortex cycles and proximal vortex nerves.   Vortex cycles can either be spatially close (overlapping vortex cycles have one or more common vertices) or descriptively close (pairs of vortex cycles that  intersect descriptively).   For such cell complexes, we have the following open problems.\\
\vspace{3mm}

\begin{compactenum}[{open}-1$^o$]
\item Vortex photons can be spatially close (overlap).    From Theorem~\ref{thm:vortexCyclesCWTopologySpatial}, a CW topology can be constructed on each cluster of vortex photons in a uniform Leader topology on a collection of vortex photons.   In that case, the problem of considering the spatial closeness of vortex photons for classification and analysis purposes, is simplified by considering a CW topology on each cluster of intersecting vortex photons.   This is a form of problem reduction, which has not yet been attempted.
\item The space between the spiraling flux of vortex photons can be viewed as holes.   Modelling vortex photons with holes using a combination of connectedness proximity and CW topology on clusters of such photons for classification and analysis purposes,  is an open problem.   This is a form of knowledge extraction.
\item It is well-known that real elementary particles can have the form of knots~\cite{FlamminiStasiak2007PMPESparticleKnots}, which have various forms in knot theory~\cite{Toiffoli2014knots}.   Vortex cycles can viewed as collections of intersecting knots.  The collection of all possible configurations of spatially close vortex cycles is an open problem.
\item A class of elementary particles known as glueballs exist as knotted chromodynamics flux lines~\cite{FlamminiStasiak2007PMPESparticleKnots}.   Vortex nerves can viewed as collections of intersecting (overlapping) glueballs.  The collection of all possible configurations of spatially close vortex nerves is an open problem.
\item From what has been observed in this paper, vortex cycles can be spatially close (overlap) vortex nerves.   The collection of all possible configurations of vortex cycles spatially close to vortex nerves is an open problem.
\item Let the cell complex $K$ be a Hausdorff space equipped with $\dsconn$ and descriptive closure $\dcl$.   Let $A$ be a cell (skeleton) in $K$.   A descriptive CW complex can be defined on each cell decomposition $A,B\in K$, if and only if
\begin{description}
\item [{\bf descriptive Closure Finiteness}] Closure of every cell (skeleton) $\dcl A$ intersects on a finite number of other cells.
\item [{\bf descriptive Weak topology}]  $A\in 2^K$ is descriptively closed ($A = \dcl A$), provided $A\dcap \dcl B$ is closed, {\em i.e.},
$
A\dcap \cl B = \dcl(A\cap \cl B).
$
\end{description}
Prove that $K$ has a topology $\tau$ that is a descriptive CW topology, provided $\tau$ has the descriptive closure finiteness and descriptive weak topology properties.
\item Let $K$ be a finite collection of vortex cycles that is a Hausdorff space equipped the proximity $\dsconn$ and descriptive closure $\dcl$ and let $\tau$ be a Leader uniform topology on the proximity space $\left(K,\dsconn\right)$.   Prove that each cluster of vortex cycles $E\in \tau$ has a descriptive CW topology on $E$.
\item Let $K$ be a finite collection of vortex nerves that is a Hausdorff space equipped the proximity $\dsconn$ and descriptive closure $\dcl$ and let $\tau$ be a Leader uniform topology on the proximity space $\left(K,\dsconn\right)$.   Prove that each cluster of vortex cycles $E\in \tau$ has a descriptive CW topology on $E$.
\item\label{prob:MNCcontours}  Inner and outer contours on maximal nucleus clusters (MNCs) on tessellated digital images~\cite[\S 8.9-8.2]{Peters2017computerVisionFoundations}form vortex cycles.   An open problem is to construct a CW topology on collections of MNC vortex cycles equipped with the relator $\left\{\conn,\sconn,\dsconn\right\}$.
\item An open problem to construct a Leader uniform topology on a collection of MNC vortex cycles equipped with the relator $\left\{\conn,\sconn,\dsconn\right\}$ and a CW topology on a Leader uniform topology cluster.
\item Brain tissue tessellation shows an absence of canonical microcircuits~\cite{PetersTozziRamanna2016NSLbrainTissueTessellation}.   For related work on donut-like trajectories along preferential brain railways, shaped as a torus, see, {\em e.g.},~\cite{TozziPeters2018NSLbrainRailways}.  An open problem is to construct a CW topology on a Leader uniform topology cluster (equipped with the proximity $\sconn$ or with $\dsconn$) that results from a brain tissue tessellation.   This is an application of the result from Problem~\ref{prob:MNCcontours}.
\item {\bf Vortex Cat in spacetime}.   By tessellating a video frame showing a cat, finding the maximum nucleus cluster MNC on the tessellated frame, and constructing fine and coarse contours surrounding the MNC nucleus, we obtain a vortex cycle.   By repeating these steps over a sequence of frames in a video, we obtain a vortex cat cycle in spacetime.   See, for example, the sample vortex cat cycles in~\cite{Enze2018vortexCatOne} and~\cite{Enze2018vortexCatTwo}.   An open problem is the construction of a Leader uniform topology on the collection of video frame vortex cat cycles equipped with the proximity $\sconn$ and to track the persistence of a Leader uniform topology cluster over a video frame sequence.
\item {\bf \u{C}ech nerve contours}.   Contours on \u{C}ech nerve nuclei are introduced in~\cite[\S 4.3.2, p. 119ff]{
AmadPeters2018TAMCSCechContours}.  An open problem is to construct a descriptive CW topology on a collection of  \u{C}ech nerve contours equipped with the proximity $\dsconn$.
\qquad \textcolor{blue}{\Squaresteel}
\end{compactenum}
$\mbox{}$\\
\vspace{3mm}
 
\bibliographystyle{amsplain}
\bibliography{NSrefs}

\end{document}